\def\thesection{\arabic{section}}
\def\theequation{\thesection.\arabic{equation}}
\newcommand{\ds} {\displaystyle}
\newcommand{\e}{\varepsilon}
\newcommand{\pa} {\partial}
\newcommand{\al} {\alpha}
\newcommand{\ba} {\beta}
\newcommand{\de} {\delta}
\newcommand{\ga} {\gamma}
\newcommand{\Om} {\Omega}
\newcommand{\ra} {\rightarrow}
\newcommand{\rp} {\rightharpoonup}
\newcommand{\De} {\Delta_{\mathbb{H}}}
\newcommand{\la} {\lambda}
\newcommand{\noi} {\noindent}
\newcommand{\na} {\nabla_{\mathbb{H}}}
\newcommand{\mb} {\mathbb}
\newcommand{\ld} {\langle}
\newcommand{\rd} {\rangle}
\newcommand{\ka}{\kappa}
\def\theequation{\@arabic{\c@section}.\@arabic{\c@equation}}
\def\R{{I\!\!R}}
\def\H{{I\!\!H}}
\def\QED{\hfill {$\square$}\goodbreak \medskip}
\def\H{\mathbb{H}^N}
\def\R{\mathbb{R}^N}
\def\J{\mathcal{J}_a}
\newtheorem{Theorem}{Theorem}[section]
\newtheorem{Lemma}[Theorem]{Lemma}
\newtheorem{Proposition}[Theorem]{Proposition}
\begin{document}
\vspace{0.01in}

\title
{Brezis-Nirenberg type result for Kohn Laplacian  with critical Choquard  Nonlinearity }

\author{ {\bf Divya Goel\footnote{e-mail: divyagoel2511@gmail.com} \; 
	and \;  K. Sreenadh\footnote{
		e-mail: sreenadh@maths.iitd.ac.in}} \\ Department of Mathematics,\\ Indian Institute of Technology Delhi,\\
	Hauz Khaz, New Delhi-110016, India. }

\date{}

\maketitle

\begin{abstract}
In this article, we are study the following Dirichlet problem with Choquard type non linearity  
\[
-\Delta_{\mathbb{H}} u = a u+ \left(\int_{\Omega}\frac{|u(\eta)|^{Q^*_\lambda}}{|\eta^{-1}\xi|^{\lambda}}d\eta\right)|u|^{Q^*_\lambda-2}u  \; \text{in}\;
\Omega,\quad 
 u = 0 \;    \text{ on } \partial \Omega ,
\]
 where  $\Omega$ is a smooth bounded subset of the Heisenberg group $\mathbb{H}^N, N\in \mathbb N$  with $C^2$ boundary and $\Delta_{\mathbb{H}}$ is the Kohn Laplacian on the Heisenberg group $\mathbb{H}^N$. Here,  $Q^*_\lambda=\frac{2Q-\lambda}{Q-2},\; Q= 2N+2$
and $a$ is a positive real parameter. We derive  the 
Brezis-Nirenberg type result for the  above problem.  Moreover, we also prove the regularity of solutions and nonexistence of solutions depending on the range of $a$.   
\medskip
  
\noi \textbf{Key words:} Kohn Laplacian, Heisenberg group, Hardy-Littlewood-Sobolev inequality, critical exponent.

\medskip

\noi \textit{2010 Mathematics Subject Classification: 35H20, 43A80, 35B33, 35A15.}

\end{abstract}
\section{Introduction}
The purpose of this article is to  establish the Brezis-Nirenberg type result for the following semi linear Dirichlet homogeneous boundary value problem:  
\begin{equation*}
(P_a)\;
\left\{
%\begin{array}{rllll}
-\De u = a u+ \left(\int_{\Om}\frac{|u(\eta)|^{Q^*_\la}}{|\eta^{-1}\xi|^{\la}}d\eta\right)|u|^{Q^*_\la-2}u\;\;  \text{in } \Omega,\;\;
u=0 \; \text{ on } \pa \Om,
%\end{array}
\right.
\end{equation*}
\noi where $\Om$ is a smooth bounded subset of the Heisenberg group $\H, N\in \mathbb N$  with $C^2$ boundary and $\De$ is the subelliptic Laplacian on $\H$.  Here   $0<\la<N$,  $a\in \mathbb R$, $Q=2N+2$  and $ Q^*_\la=\frac{2Q-\la}{Q-2}$ is the critical exponent in the sense of Hardy-Littlewood-Sobolev inequality \eqref{hg1}. We recall that  Choquard equation \eqref{co20}  was first introduced in  the pioneering work of H.~Fr\"ohlich \cite{froh} and S.~Pekar \cite{pekar} for the  modeling of  quantum polaron:
\begin{equation}\label{co20}
-\Delta u +u = \left(\frac{1}{|x|}* |u|^2\right) u \text{ in } \mathbb{R}^3.
\end{equation}
As pointed out by Fr\"ohlich \cite{froh} and Pekar, this model corresponds to the study of  free electrons
in an ionic lattice interact with phonons associated to deformations of the
lattice or with the polarization that it creates on the medium (interaction
of an electron with its own hole). 
% In the
%approximation to Hartree-Fock theory of one component plasma,
%Choquard used  equation \eqref{co20} to
%describe an electron trapped in its own hole.
Due to its applications in Physical models, later many people studied groundstate solutions of  Choquard equations (See \cite{moroz2,moroz3,ehleib}). Recently, Yang and Gao(\cite{yang,yangjmaa}) studied the boundary value problem for Choquard equation. In particular, they considered the following equation 
\begin{equation*}
-\Delta u = \la h(u)+ \left(\int_{\Om}\frac{|u(y)|^{2^*_{\mu}}}{|x-y|^{\mu}}dy\right)|u|^{2^*_{\mu}-2}u \text{ in } \Om, \quad
u=0 \text{ on } \pa \Om, 
\end{equation*}
\noi where  $\Om\subset \mb R^N, N\ge 3$ is a bounded domain having smooth boundary $\pa \Om$,  $\la>0$, $0<\mu<N$. For more details on recent works on Choquard equation we refer to \cite{guide,ts} and the reference therein. \\
  For the sake of reader convenience, we will  give a short brief on the Heisenberg group and Kohn  Laplacian. The Heisenberg Group $\H = \mathbb{R}^N\times\mathbb{R}^N\times \mathbb{R},\; N\in \mathbb{N}$ is a  Lie group, endowed with the  following group law 
\begin{align*}
(x,y,t)\cdot (x^\prime,y^\prime,t^\prime	)= (x+x^\prime,y+y^\prime, t+t^\prime+2(\ld x,y^\prime\rd - \ld x^\prime, y \rd  )), 
\end{align*}
where $x,y, x^\prime, y^\prime \in \mathbb{R}^N$. The corresponding Lie algebra of left invariant vector fields  is generated by  the following vector  fields
$$
 X_j= \frac{\pa}{\pa x_j}+ 2y_j\frac{\pa}{\pa t},  \;
Y_j= \frac{\pa}{\pa y_j}- 2x_j\frac{\pa}{\pa t} , \;
T= \frac{\pa}{\pa t}.$$
It is straightforward to check that for all $ j,k =1,2,\cdots, N,$
\begin{equation}\label{hg33}
 [X_j,X_k]=[Y_j,Y_k]= [X_j, \frac{\pa}{\pa t}]= [Y_j, \frac{\pa}{\pa t}]=0, 
  \text{ and } [X_j, Y_k]= -4\de_{jk}\frac{\pa}{\pa t}.
\end{equation}
 These relations \eqref{hg33} establish the Heisenberg's canonical commutation relations of quantum mechanics for position and momentum, hence the name Heisenberg group \cite{hei}.  The second order self-adjoint operator,  $\De$ on $\H$  is defined as follows:
\begin{align*}
\De= \sum_{j=1}^{N}X_j^2+Y_j^2, 
\end{align*}
is usually called  subelliptic Laplacian or Kohn Laplacian on $\H$. The foundational work of H\"ormander\cite{hor} on the operators of type sum of squares of vector fields attracted lot of researchers and as a result, there is a considerable amount of  development in this field related to analysis of homogeneous Lie groups. The operator  $\De$ is hypoelliptic and the fundamental solution of this operator was given by Folland \cite{folland2}. Later in \cite{folland1}, Folland also proves some subelliptic estimates and provides the function spaces on the nilpotent Lie groups. The divergence form of Kohn Laplacian is defined  as   $\De u= \nabla \cdot (A\nabla u )$, where $A$ is the following $(2N+1)\times(2N+1)$ matrix:
\begin{equation}\label{hg25}
\begin{aligned}
\begin{bmatrix}
I & 0 & 2y^T  \\
0 & I & 2x^T \\
2y & -2x& 4(x^2+y^2)\\
\end{bmatrix} ,
\end{aligned}
\end{equation}
where $I$ is $(N\times N)$ identity matrix and $x^2+y^2= \ds \sum_{j=1}^{N} x_j^2+y_j^2$. Hence the following Guass-Green formula holds:
\begin{align*}
\int_{\Om}\De u~v d\xi= -\int_{\Om}\na u\cdot \na v \; d\xi + \int_{\pa \Om} v A\nabla u . \nu \; d\sigma, 
\end{align*}
where $\na u$ is the $2N$- vector $(X_1u,X_2u,\cdots, X_Nu, Y_1 u, Y_2u, \cdots Y_Nu)$  and $\nu$ is unit outward normal to the boundary $\pa \Om$. We define the left translations on $\H$ by 
\begin{align*}
\tau_\xi:\H \ra \H \qquad \tau_\xi(\xi^\prime)= \xi \cdot\xi^\prime,
\end{align*}
and the  natural $\mathbb{H}$-dilations $\de_\theta: \H \ra \H $ by 
$$
 \de_\theta(x,y,t)= (\theta x, \theta y, \theta^2 t), 
$$
for $\theta>0$. It is easy to verify that the $\De$ is invariant with respect to left translations and  it is homogeneous of degree $2$ with respect to dilation $\de_\theta$. That is, 
\begin{align*}
\De(u\; o\; \tau_\xi)= \De u \; o\; \tau_\xi,\qquad \De(u\; o\; \de_\theta)= \theta^2 \De u \; o\; \de_\theta. 
\end{align*}
The Jacobian determinant of $\de_\theta$ is $\theta^Q$. The number $Q=2N+2$ is called the homogeneous dimension of $\H$ and it portrays a role equivalent to the topological dimension in the Euclidean space.  We denote the homogeneous norm on $\H$ by  
\begin{align*}
|\xi|= |(x,y,t)|= (t^2+ (x^2+y^2)^2)^{1/4}, \text{ for all } \xi = (x,y,t) \in \H.
\end{align*}
%and 
%\begin{align*}
%d(\xi, \xi_0)= d_0(\xi_0^{-1}\cdot \xi)
%\end{align*}
%is the natural distance on $\H$. 
We shall denote $B(\xi,r)$, the ball of center $\xi$ and radius $r$. It implies $\tau_\xi(B(0,r))= B(\xi,r)$ and $\de_r(B(0,1))= B(0,r)$. 
We denote $\varGamma^2(\overline{\Om})$ be the space of all continuous functions $u$ on $ \overline{\Om}$  such that $X_ju,\; Y_ju,\; X_j^2u$ and $Y_j^2u$ are all continuous in $\Om$ which  can be continuously extended up to the boundary of $\Om$. Analogous to space $H^{1}(\R)$, Folland and Stein \cite{follandstein} introduced the space $S_1^2(\H)$ which is related to Vector fields $X_j$ and  $Y_j$. The space 
\begin{align*}
S_1^2(\H)= \bigg\{u \in L^2(\H)\; :\; X_ju,Y_ju \ \in L^2(\H) \text{ for all } j=1,2,\cdots N\bigg\}, 
\end{align*}
 is a Hilbert space with  inner product 
 \begin{align*}
\langle u, v\rangle=\int_{\H} \na u \cdot \na v ~ d\xi + \int_{\H}  u   v ~ d\xi 
 \end{align*}
 and the corresponding norm is
 \begin{align*}
\|u\|^2_{S_1^2(\H)} = \int_{\H}|\na u |^2 ~d\xi+ \int_{\H} |u|^2~ d\xi.
\end{align*}
 The space $\mathring{S}_1^2(\Om)$ is the closure of $C_c^{\infty}(\Om)$ in $S_1^2(\H)$. Then  a Poincare type inequality shows that $\mathring{S}_1^2(\Om)$ is a Hilbert space with the norm 
\begin{align*}
\|u\|^2_{\mathring{S}_1^2(\Om)} = \int_{\Om}|\na u |^2 ~d\xi.
\end{align*}
  %More general H\"older space $\varGamma^{k+\al}$ can be found in \cite{follandstein} in detail.\\
In \cite{follandstein}, Folland and Stein proved the following Sobolev type inequality:
There exists a positive constant $C_Q$ such that 
\begin{align}\label{hg2}
|u|_{Q^*}\leq C_Q \|u\|_{S_1^2(\H)},\; \text{for all} \; u\in S_{1}^{2}(\H).
\end{align}
where $|\cdot |_{Q^*}$ is the norm in $L^{Q^*}(\H)$ and $Q^*= \frac{2Q}{Q-2}$. Moreover, when $\Om$ is bounded then $\mathring{S}_1^2(\Om)$ is continuously embedded in $L^p(\Om)$ for all $p\in [1, Q^*]$. Moreover, these embedding are compact when $p<Q^*$ and the embedding is not compact in the limiting case $p=Q^*$. Next, let $\{a_n\}$ be the sequence of eigenvalues of $-\De $ on $\Om$ with zero Dirichlet boundary data.  Then  $a_n\ra \infty$ as $  n \ra \infty$ and
\begin{align*}
0< a_1\leq a_2\leq a_3\leq \cdots\leq a_n \leq \cdots
\end{align*}
 Also,  the variational characterization of $a_1$ is given by 
\begin{align*}
a_1= \inf_{ u \in \mathring{S}_1^2(\Om)\setminus \{0\}}  \bigg \{   \int_{\Om}|\na u|^2~dx :\; \int_{\Om}| u|^2~dx=1 \bigg\}.
\end{align*}
Moreover, let $\{e_n\} \subset \mathring{S}_1^2(\Om)$ be the sequence of eigen functions corresponding to eigenvalues $\{a_n\}$. 
%then this sequence is an orthonormal basis of $L^2(\Om)$ and $\mathring{S}_1^2(\Om)$. 
If we denote 
\begin{align*}
\mathbb{Y}_n = \text{ span}\{ e_1, e_2, \cdots, e_n  \} \text{ and }\mathbb{E}_{n+1} := \{ u \in \mathring{S}_1^2(\Om)\; : \; \ld u, e_j\rd =0,  \text{ for all } j=1,2,\cdots, n     \}, 
\end{align*}
then $\mathring{S}_1^2(\Om)= \mathbb{Y}_n \oplus \mathbb{E}_{n+1}$ for all $n \in \mathbb{N}$. By using standard Di girogi technique,  one can have $e_n \in L^\infty(\Om)$ for all $n \in \mathbb{N}$. 
The best constant for the embedding $S_1^2(\H)$ into  $L^{Q^*}(\H)$ is defined as 
\[S=\inf_{ u \in S_1^2(\H) \setminus \{0\}} \left\{ \int_{\H} |\na u|^2 dx:\; \int_{\H}|u|^{Q^{*}}dx=1\right\}.\]
By \eqref{hg2}, $S>0$. Also, for any non-empty open set $\Om \subset \H$, we have
\begin{align}\label{hg3}
S=\inf_{ u \in \mathring{S}_1^2(\Om) \setminus \{0\}} \left\{ \int_{\H} |\na u|^2 dx:\; \int_{\H}|u|^{Q^{*}}dx=1\right\}.
\end{align}
That is, infimum  is achieved  when $\Om = \H$. In the pioneering work, Jersion and Lee \cite{jl1,jl2}  proved that, up to a positive constant $B_0$, the function 
\begin{align}\label{hg4}
Z(\xi)=Z(x,y,t)= \frac{B_0}{(t^2+(1+|x|^2+|y|^2)^2)^{(Q-2)/4}}
\end{align}
is such that $S^{Q/2}= \|Z\|_{S_1^2(\H)}^2= |Z|_{Q^*}^{Q^*}$. Further, any minimizer of $S$  takes the form  
\[
z_{\ba \omega}(\xi)= \ba^{\frac{Q-2}{2}} Z(\de_\ba(\omega^{-1}\xi)), 
\]
for suitably $\ba >0$ and $\omega \in \H$.

%In \cite{garofalo1} Garofalo and Lanconelli proved the existence, regularity and nonexistence of weak solutions of the following equation
%\[
%\De u +f(u)=0,\text{ in } D  \quad
%u= 0 \text{ on  } \pa D, \]
%where $D$ is an open(bounded or unbounded) subset of the Heisenberg group $\H$ and $f$ is locally Lipschitz continuous function  and of subcritical growth at $\infty$.
The inspiring point for studying the  semilinear equations on Heisenberg group is the fact that these equations  arise as Euler-Lagrange equations in
some variational problems on Cauchy--Riemann (CR) manifolds. For more details, we refer to the works of Jerison and Lee \cite{jl1,jl2} on the CR
Yamabe problem.  The Dirichlet problem on Kohn Laplacian was first studied by Jerison\cite{jerison1,jerison2}. After that there were many article on the Dirichlet problem on Kohn Laplacian. For instance, in \cite{citti}, Citti proved the existence of a positive solution for the following equation 
\[
-\De u + au = u^{Q^*-1}+ f(x, u)=0,\text{ in } D  \quad
u= 0 \text{ on  } \pa D, 
\]
where $f$ is a lower order term that is,   $f(x, u)= o(|u|^{Q^*-1})$ as $|u| \ra \infty$ and $a \in L^\infty(\Om)$.  For more works on Dirichlet problem for Kohn Laplacian we refer the readers to  \cite{biagini,birini,birindelli,garofalo1,garofalo2} and references therein. 

Recently  a great attention has been focused on nonlocal equations on Euclidean domain, both for
the pure mathematical research and in view of concrete real-world applications. There is a substantial amount of article which discusses the existence, nonexistence and multiplicity of solutions. For detailed study one can go through \cite{servadei, yang,yangjmaa} and references therein. 
We need to point out that  all the works on Heisenberg group  mentioned above talks about the semilinear Dirichlet problem involving   Sobolev critical exponent (that is, $Q^*$) on Heisenberg group.
But no article talks about the nonlocal equations of Choquard type on Heisenberg Group. Since nonlocal critical equations are relevant for their relations with problems arising in differential geometry, where a lack of compactness occurs therefore, it is essential to study the nonlocal Dirichlet problem  on Heisenberg group. 

Motivated by all above, in this article we consider the nonlocal Dirichlet problem $(P_a)$ with Choquard type critical nonlinearity. 
% Precisely, we consider the  following problem
%\begin{align*}
%(P_a)\; \left\{
%\begin{array}{ll}
%-\De u = a u+ \left(\ds\int_{\Om}\frac{|u(\eta)|^{Q^*_\la}}{|\eta^{-1}\xi|^{\la}}d\eta\right)|u|^{Q^*_\la-2}u\;\;  \text{in } \Omega,\quad
%u=0 \; \text{ on } \pa \Om,
%\end{array} 
%\right. 
%\end{align*}
%where $a\in \mathbb{R}, 0<\la<Q$ and $Q^*_\la= \frac{2Q-\la}{Q-2}$ is the critical exponent in the sense of Hardy-Littlewood-Sobolev type inequality on Heisenberg group (See Proposition \ref{prophg1}). 
First question that is natural to ask for $(P_a)$ is  the existence of weak solution in the lines of renowned result of  Brezis and Nirenberg \cite{brezis}.  In this article we answer  this question affirmatively. 
The salient feature of this article is the blow-up analysis  to study the critical level and compactness of Palais-Smale sequences.
  The estimates on the critical term are delicate and uses various inequalities involving the minimizers. 
  Furthermore, we prove the  regularity of solutions to $(P_a)$. We also prove Pohozaev type identity(See Lemma \ref{lemhg14}) for convolution type nonlinearity for Kohn Laplacian which itself a novel  result and of autonomous interest and subsequently give a nonexistence result in star shaped domain.  We highlight that no result is available in the current literature for Choquard equation on Heisenberg group.
  In this regard, the results proved in the present article are completely new.
    The main results proved in this article are the following.
\begin{Theorem}\label{thmhg1} 	Let $\Om$ be a bounded domain of $\H$. Then  
\begin{enumerate} \item[(i)] for all  $0< \la<Q$ and $a>0$,  $a\ne a_k, k=1,2,...$,   $(P_a)$ has a nontrivial solution. % are the  eigenvalue of $-\De $ with zero Dirichlet boundary data.\\
\item[(ii)] any weak solution $u \in \mathring{S}_1^2(\Om)$ of problem $(P_a)$ belongs to $\Gamma^{2+\al}(\Om) \cap \Gamma^{\al_1}(\overline{\Om})$ for some $\al, \al_1  \in (0,1)$. 
\end{enumerate}
\end{Theorem}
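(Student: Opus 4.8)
The plan for (i) is a linking argument in the spirit of Brezis--Nirenberg, and for (ii) a bootstrap. Work with the $C^{1}$ functional $\J:\mathring{S}_{1}^{2}(\Om)\to\R$,
\[
\J(u)=\frac12\int_{\Om}|\na u|^{2}\,d\xi-\frac a2\int_{\Om}u^{2}\,d\xi-\frac1{2Q^{*}_{\la}}\int_{\Om}\int_{\Om}\frac{|u(\eta)|^{Q^{*}_{\la}}\,|u(\xi)|^{Q^{*}_{\la}}}{|\eta^{-1}\xi|^{\la}}\,d\eta\,d\xi,
\]
which is finite and of class $C^{1}$ by the Hardy--Littlewood--Sobolev inequality \eqref{hg1} and the Folland--Stein embedding \eqref{hg2}, its critical points being exactly the weak solutions of $(P_{a})$. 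Fix $k\ge0$ with $a_{k}<a<a_{k+1}$ (with $a_{0}:=0$). Since $a\notin\{a_{j}\}$, the quadratic form $u\mapsto\int_{\Om}|\na u|^{2}-a\int_{\Om}u^{2}$ is negative definite on the finite--dimensional space $\mathbb{Y}_{k}$ and coercive on $\mathbb{E}_{k+1}$; using $\mathring{S}_{1}^{2}(\Om)=\mathbb{Y}_{k}\oplus\mathbb{E}_{k+1}$ one verifies the geometry of the linking theorem --- on the small sphere $S_{\rho}=\pa B_{\rho}\cap\mathbb{E}_{k+1}$ one has $\inf_{S_{\rho}}\J\ge\alpha>0$ (by \eqref{hg1} and \eqref{hg2}), and $S_{\rho}$ links $\pa\mathcal{Q}$, where $\mathcal{Q}=(\overline{B}_{R}\cap\mathbb{Y}_{k})\oplus[0,R]\,v$ for an appropriate $v$ (a truncated Jerison--Lee bubble, specified below) and $R\gg1$. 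This produces a minimax value $c\ge\alpha>0$.

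The compactness step is to show that $\J$ satisfies $(PS)_{c}$ for every $c<c^{*}:=\bigl(\tfrac12-\tfrac1{2Q^{*}_{\la}}\bigr)S_{H,L}^{\,Q^{*}_{\la}/(Q^{*}_{\la}-1)}$, where $S_{H,L}$ is the best constant of the Choquard--Sobolev inequality on $\H$,
\[
S_{H,L}:=\inf_{u\in\mathring{S}_{1}^{2}(\H)\setminus\{0\}}\Bigl(\int_{\H}|\na u|^{2}\,d\xi\Bigr)\Bigl(\int_{\H}\int_{\H}\frac{|u(\eta)|^{Q^{*}_{\la}}|u(\xi)|^{Q^{*}_{\la}}}{|\eta^{-1}\xi|^{\la}}\,d\eta\,d\xi\Bigr)^{-1/Q^{*}_{\la}}.
\]
By \eqref{hg1} and \eqref{hg3}, $S_{H,L}>0$, and --- the Heisenberg analogue of the Euclidean identity of Gao--Yang --- $S_{H,L}=S\,C(Q,\la)^{-1/Q^{*}_{\la}}$ is attained precisely at the dilates and translates $z_{\ba\omega}$ of the function $Z$ in \eqref{hg4}, $C(Q,\la)$ being the sharp Hardy--Littlewood--Sobolev constant. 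A $(PS)_{c}$ sequence $\{u_{n}\}$ is bounded because $2Q^{*}_{\la}>2$; passing to $u_{n}\rightharpoonup u$, a weak solution, the compact embedding $\mathring{S}_{1}^{2}(\Om)\hookrightarrow L^{2}(\Om)$ removes the $a$--term from the remainder, and the Brezis--Lieb lemma for $\int|\na\cdot|^{2}$ together with its Riesz--type double--integral version (valid verbatim on $\H$, in the manner of Moroz--Van Schaftingen) give, for $v_{n}:=u_{n}-u\rightharpoonup0$, that $\int_{\Om}|\na v_{n}|^{2}=\int_{\Om}\int_{\Om}|\eta^{-1}\xi|^{-\la}|v_{n}(\eta)|^{Q^{*}_{\la}}|v_{n}(\xi)|^{Q^{*}_{\la}}\,d\eta\,d\xi+o(1)$ and $\J(u_{n})=\J(u)+\J(v_{n})+o(1)$. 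If $\int_{\Om}|\na v_{n}|^{2}\to\ell>0$, the definition of $S_{H,L}$ forces $\ell\ge S_{H,L}^{\,Q^{*}_{\la}/(Q^{*}_{\la}-1)}$, whence $c=\J(u)+\lim_{n}\J(v_{n})\ge c^{*}$ --- since $\J(u)=(\tfrac12-\tfrac1{2Q^{*}_{\la}})\int_{\Om}\int_{\Om}|\eta^{-1}\xi|^{-\la}|u(\eta)|^{Q^{*}_{\la}}|u(\xi)|^{Q^{*}_{\la}}\,d\eta\,d\xi\ge0$ for the solution $u$ --- contradicting $c<c^{*}$; hence $u_{n}\to u$ strongly.

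It then remains to verify $c<c^{*}$, the heart of (i). Let $U_{\e}:=z_{1/\e,\,\omega_{0}}$ concentrate at an interior point $\omega_{0}\in\Om$, multiply by a fixed $\varphi\in C_{c}^{\infty}(\Om)$ with $\varphi\equiv1$ near $\omega_{0}$, and take $v=u_{\e}:=\varphi U_{\e}$ in $\mathcal{Q}$. Using the conformal invariance of the double integral and classical cutoff estimates one obtains, as $\e\to0$, $\int_{\Om}|\na u_{\e}|^{2}=S^{Q/2}+O(\e^{Q-2})$ and $\int_{\Om}\int_{\Om}|\eta^{-1}\xi|^{-\la}|u_{\e}(\eta)|^{Q^{*}_{\la}}|u_{\e}(\xi)|^{Q^{*}_{\la}}\,d\eta\,d\xi=\int_{\H}\int_{\H}|\eta^{-1}\xi|^{-\la}|Z(\eta)|^{Q^{*}_{\la}}|Z(\xi)|^{Q^{*}_{\la}}\,d\eta\,d\xi+O(\e^{Q-2})$, whereas $\int_{\Om}u_{\e}^{2}$ is of order $\e^{2}$ when $Q>4$ and of order $\e^{2}|\log\e|$ when $Q=4$. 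Since $Q=2N+2\ge4$ for every $N\in\N$, the gain $a\int_{\Om}u_{\e}^{2}$ always dominates the $O(\e^{Q-2})$ error terms, so $\sup_{t\ge0}\J(t u_{\e})\le c^{*}-c_{0}\,a\int_{\Om}u_{\e}^{2}+o\bigl(\int_{\Om}u_{\e}^{2}\bigr)<c^{*}$ for $\e$ small; moreover, on the remaining (finite--dimensional) directions of $\mathcal{Q}$, where $\J\le-c_{1}\|\cdot\|_{\mathring{S}_{1}^{2}(\Om)}^{2}$, the cross terms with $u_{\e}$ contribute at a strictly lower order than $a\int_{\Om}u_{\e}^{2}$ (here one uses $e_{1},\dots,e_{k}\in L^{\infty}(\Om)$), so still $\max_{\pa\mathcal{Q}}\J<c^{*}$. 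Hence $c<c^{*}$, and the linking theorem yields a critical point $u\ne0$ at level $c\in(0,c^{*})$, proving (i). The delicate asymptotics of the Choquard double integral of the truncated instanton --- and, through this blow-up analysis, the identification of $c^{*}$ as the exact compactness threshold --- are the main obstacle throughout.

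For (ii), let $u\in\mathring{S}_{1}^{2}(\Om)$ be any weak solution and put $h(\xi):=\int_{\Om}|\eta^{-1}\xi|^{-\la}|u(\eta)|^{Q^{*}_{\la}}\,d\eta$, so that $-\De u=au+h\,|u|^{Q^{*}_{\la}-2}u$. Since $u\in L^{Q^{*}}(\Om)$ by \eqref{hg2} and $Q^{*}_{\la}$ is the Hardy--Littlewood--Sobolev--critical exponent, $|u|^{Q^{*}_{\la}}\in L^{Q^{*}/Q^{*}_{\la}}(\Om)$, and \eqref{hg1} gives $h\in L^{s}(\Om)$ for some $s>Q/\la$. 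A Moser/Brezis--Kato iteration adapted to $\H$ --- testing the equation against $u\,\min(|u|,L)^{2(\gamma-1)}$, using \eqref{hg2} and re--estimating $h$ via \eqref{hg1} as the integrability of $u$ improves --- then yields $u\in L^{p}(\Om)$ for all $p<\infty$, so that $h\in L^{\infty}(\Om)$ and $f:=au+h|u|^{Q^{*}_{\la}-2}u\in L^{\infty}(\Om)$. The subelliptic $L^{p}$ and Schauder regularity theory for $\De=\sum_{j=1}^{N}(X_{j}^{2}+Y_{j}^{2})$ (Folland; Folland--Stein) gives $u\in\Gamma^{\al_{1}}_{\mathrm{loc}}(\Om)$, whence $h$ and $f$ are locally H\"older continuous and a final Schauder bootstrap gives $u\in\Gamma^{2+\al}(\Om)$; since $\pa\Om$ is $C^{2}$, boundedness of $f$ together with barrier functions for the Dirichlet problem of $\De$ on $C^{2}$ domains gives $u\in\Gamma^{\al_{1}}(\overline{\Om})$. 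The point requiring care is that the nonlocal potential $h$ has to be upgraded in step with $u$ throughout the iteration.
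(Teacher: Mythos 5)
Your proposal is correct and follows essentially the same route as the paper: the threshold $c^{*}=\frac{Q+2-\la}{2(2Q-\la)}S_{HG}^{\frac{2Q-\la}{Q+2-\la}}$, the Brezis--Lieb splitting for the $(PS)_{c}$ condition, the truncated Jerison--Lee bubble estimates with the $Q>4$ versus $Q=4$ dichotomy, the use of $e_{1},\dots,e_{k}\in L^{\infty}(\Om)$ to control the cross terms in the linking set, and the Brezis--Kato/Moser iteration followed by Folland's subelliptic Schauder theory for (ii) all coincide with the paper's argument. The only (immaterial) difference is that you run a single linking scheme for all $k\ge0$ with $a_{0}:=0$, whereas the paper treats $0<a<a_{1}$ by the mountain pass theorem and $a>a_{1}$ by the linking theorem separately.
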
 
Next we prove the following nonexistence result.
\begin{Theorem}\label{thmhg2}
	Let $a<0$ and $\Om $ is a smooth bounded domain in $\H$, which is strictly star-shaped with respect to the origin in $\H$, then any solution $u \in \Gamma^2(\overline{\Om})$ of $(P_a)$ is trivial. 
\end{Theorem}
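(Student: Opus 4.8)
The plan is to deduce the statement from a Pohozaev--Rellich type identity for $(P_a)$ (Lemma~\ref{lemhg14}) combined with the weak formulation of $(P_a)$ tested against $u$ itself; this is the exact analogue, on the Heisenberg group and for the Choquard nonlinearity, of the classical Brezis--Nirenberg nonexistence argument for $-\Delta u=au+u^{2^*-1}$ in star-shaped domains. For a solution $u\in\Gamma^{2}(\overline{\Om})$ of $(P_a)$ write
\[
A=\int_\Om|\na u|^2\,d\xi,\qquad B=\int_\Om|u|^2\,d\xi,\qquad C=\int_\Om\int_\Om\frac{|u(\eta)|^{Q^*_\la}\,|u(\xi)|^{Q^*_\la}}{|\eta^{-1}\xi|^{\la}}\,d\eta\,d\xi .
\]
Testing with $u$ gives the energy identity $A=aB+C$. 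The Pohozaev identity is obtained by testing against $\mc Z u$, where $\mc Z=\sum_{j=1}^{N}(x_jX_j+y_jY_j)+2tT$ is the infinitesimal generator of the dilations $\de_\theta$ (its Euclidean divergence equals $Q$ and $[\mc Z,X_j]=-X_j$, $[\mc Z,Y_j]=-Y_j$). Using $\De u=\nabla\cdot(A\nabla u)$ from \eqref{hg25}, integrating by parts, and using $u=0$ on $\pa\Om$, I expect the identity
\[
\frac{Q-2}{2}\,A+\frac12\,\mc B(u)=\frac{aQ}{2}\,B+\frac{2Q-\la}{2\,Q^*_\la}\,C,\qquad \mc B(u)=\int_{\pa\Om}(\pa_\nu u)^2\,(A\nu\cdot\nu)\,\langle\mc Z,\nu\rangle\,d\sigma ,
\]
where $\pa_\nu u$ denotes the Euclidean normal derivative; the coefficient of $C$ reflects the scaling of the Choquard term, and by the defining relation $Q^*_\la=\frac{2Q-\la}{Q-2}$ it equals $\frac{Q-2}{2}$.

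Next I would subtract $\frac{Q-2}{2}$ times the energy identity from the Pohozaev identity; this cancels $A$ and $C$ at once and leaves
\[
\frac12\,\mc B(u)=\Big(\frac{aQ}{2}-\frac{(Q-2)\,a}{2}\Big)\,B=a\int_\Om|u|^2\,d\xi .
\]
Since the matrix in \eqref{hg25} is positive semidefinite, $A\nu\cdot\nu\ge0$ on $\pa\Om$; and since $\Om$ is strictly star-shaped with respect to the origin and the flow of $\mc Z$ is exactly $\xi\mapsto\de_{e^{s}}\xi$, the field $\mc Z$ points outward along $\pa\Om$, so $\langle\mc Z,\nu\rangle\ge0$ there. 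Hence $\mc B(u)\ge0$. Because $a<0$, the right-hand side of the displayed identity is $\le0$, forcing both sides to vanish; in particular $\int_\Om|u|^2\,d\xi=0$, so $u\equiv0$, which is the claim.

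The whole argument therefore rests on Lemma~\ref{lemhg14}, and I expect its proof to present two delicate points. The main obstacle is the boundary analysis: because $\na$ is only the horizontal gradient, the zero-extension of $u$ is merely Lipschitz across $\pa\Om$, so the naive scaling computation must be replaced by a genuine integration by parts of $\int_\Om(-\De u)(\mc Zu)\,d\xi$ through $\De u=\nabla\cdot(A\nabla u)$; this produces several surface terms that degenerate on the characteristic set of $\pa\Om$ (where $A\nu\cdot\nu=0$), and one must verify---using $u=0$ on $\pa\Om$, the $C^{2}$ regularity of $\pa\Om$, and the regularity $u\in\Gamma^{2}(\overline{\Om})$ assumed in the statement---that they collapse to the single nonnegative term $\mc B(u)$. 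The second point is the nonlocal term: writing
\[
\int_\Om\Big(\int_\Om\frac{|u(\eta)|^{Q^*_\la}}{|\eta^{-1}\xi|^{\la}}\,d\eta\Big)|u|^{Q^*_\la-2}u\,(\mc Zu)\,d\xi=\frac1{Q^*_\la}\int_\Om\Big(\int_\Om\frac{|u(\eta)|^{Q^*_\la}}{|\eta^{-1}\xi|^{\la}}\,d\eta\Big)\,\mc Z\big(|u|^{Q^*_\la}\big)\,d\xi
\]
and symmetrising in $\xi$ and $\eta$, one uses that the kernel $|\eta^{-1}\xi|^{-\la}$ is homogeneous of degree $-\la$ for the joint dilation $(\xi,\eta)\mapsto(\de_\theta\xi,\de_\theta\eta)$ (valid because $\de_\theta$ is a group automorphism and $|\de_\theta\zeta|=\theta|\zeta|$); Euler's identity for the generator $\mc Z_\xi+\mc Z_\eta$ together with an integration by parts whose boundary contribution vanishes (as $|u|^{Q^*_\la}=0$ on $\pa\Om$) then yields the coefficient $\frac{2Q-\la}{2Q^*_\la}$ recorded above. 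Once these two computations are in place, the nonexistence conclusion is immediate from the sign argument of the second paragraph.
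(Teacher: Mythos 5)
Your proposal is correct and follows essentially the same route as the paper: the paper's Lemma~\ref{lemhg14} is exactly the Pohozaev--Rellich identity you write down (with $\int_{\pa\Om}|\na u|^2\,X\cdot N\,dH_{Q-2}$ playing the role of your $\mc B(u)$, the two agreeing since $u=0$ on $\pa\Om$ forces $|\na u|^2=(\pa_\nu u)^2\,A\nu\cdot\nu$ there), and the paper likewise concludes by combining it with the tested equation $A=aB+C$, using $\frac{2Q-\la}{2Q^*_\la}=\frac{Q-2}{2}$ to cancel the gradient and Choquard terms and then the sign argument $X\cdot N>0$, $a<0$. Your sketch of the proof of the identity itself (integration by parts via Lemma~\ref{lemhg13} for the Laplacian part, symmetrization in $\xi,\eta$ and the $-\la$-homogeneity of the kernel for the nonlocal part) also matches the paper's computation.
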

Turing to layout of the paper, in Section 2, we give the some notations and preliminary results.  For the sake of Clarity, we divide the proof of Theorem \ref{thmhg1}(i)  in two cases. In Section 3, we discuss the case  $0<a<a_1$ for Theorem \ref{thmhg1}. In Section 4, we discuss the case $a \geq a_1$ for Theorem \ref{thmhg1}(i).  In Section 5, we give the regularity   results and nonexistence result that is proof Theorem \ref{thmhg1}(ii)  and Theorem \ref{thmhg2}. 
\section{Preliminaries and Notation}
The beginning of  variational  approach for the Problem $(P_a)$ is the following proposition which originates from the work of Folland and Stein \cite{follandstein} and recently Leib and Frank \cite{leibfrank} proved the existence of maximizers for the equality in \eqref{hg1}. 
%In this section we will describe the minimizers of the best 
\begin{Proposition}\label{prophg1}
	Let $r,s>1$ and $0<\la <Q$ with $1/r+\la/Q+1/s=2$, $f\in L^r(\H)$ and $h\in L^s(\H)$. There exists a sharp constant $C(t,r,\la,Q)$ independent of $f,h$, such that 
	\begin{equation}\label{hg1}
	\int_{\H}\int_{\H}\frac{f(\xi)h(\eta)}{|\eta^{-1}\xi|^{\la}}~d\eta d\xi \leq C(t,r,\la,Q) |f|_{r} |h|_{s}.
	\end{equation}
	If $r=s=2Q/(2Q-\la)$, then 
	\begin{align*}
	C(t,r,\la,Q)=C(Q,\la)=   \left(\frac{\pi^{N+1}}{2^{N-1}N!}\right)^{\la/Q}\frac{N!\Gamma((Q-\la)/2)}{\Gamma^2((2Q-\la)/2)},
	\end{align*}
where $\Gamma$ is the usual Gamma function.	Equality holds in  \eqref{hg1} if and only if $f\equiv (constant)h$ and 
	\begin{align*}
	h(\xi)= cU(\de_\theta(a^{-1}\xi))
	\end{align*} 
	for some $c \in \mathbb{C},\;  \theta>0,\; a \in \H (\text{unless } f\equiv 0 \text{ or }g\equiv 0)$ and $U$ is defined as
	\begin{align}\label{hg34}
	U(\xi)= U(x,y,t)= (t^2+(1+|x|^2+|y|^2)^2)^{-(2Q-\la)/4}, \text{ for all } \xi = (x,y,t) \in \H. \text{ \QED}
	\end{align}
\end{Proposition}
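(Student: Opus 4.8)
The plan is to separate the statement into the bound \eqref{hg1} with a finite constant, and the sharper information (the precise value of $C(Q,\la)$ and the classification of the extremal functions), since these two parts require genuinely different tools.

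For the bound, I would recognize the left-hand side of \eqref{hg1} as a group convolution tested against a function. Writing $K(\zeta)=|\zeta|^{-\la}$ and using $|\zeta^{-1}|=|\zeta|$, the change of variables $\zeta=\eta^{-1}\xi$ (which preserves the Haar measure of $\H$, i.e.\ Lebesgue measure on $\mathbb{R}^{2N+1}$) gives
\[
\int_{\H}\int_{\H}\frac{f(\xi)h(\eta)}{|\eta^{-1}\xi|^{\la}}\,d\eta\,d\xi=\int_{\H} f(\xi)\,(h * K)(\xi)\,d\xi,
\]
where $*$ denotes convolution for the group law. The first step is to check that $K\in L^{Q/\la,\infty}(\H)$: since $\de_\theta$ has Jacobian $\theta^Q$, the ball $B(0,\rho)$ has measure $c_Q\rho^Q$, so the distribution function of $K$ is $|\{|\zeta|^{-\la}>t\}|=c_Q t^{-Q/\la}$ and hence $\|K\|_{Q/\la,\infty}=c_Q^{\la/Q}<\infty$. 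The second step is the weak Young (O'Neil) inequality on the unimodular group $\H$: one has $|h * K|_{r'}\le C\,\|K\|_{Q/\la,\infty}\,|h|_s$ whenever $1+1/r'=\la/Q+1/s$, which is exactly the scaling relation $1/r+\la/Q+1/s=2$ rewritten via $1/r'=1-1/r$ (and $\la<Q$ guarantees the Lorentz exponents line up so that the convolution lands in $L^{r'}$). The third step is Hölder's inequality, $\big|\int_{\H} f\,(h * K)\big|\le |f|_r\,|h * K|_{r'}$, which closes the estimate and proves \eqref{hg1} with a constant depending only on $Q$ and $\la$; this is essentially the Folland--Stein argument.

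The genuinely hard part, and the reason the cited work of Frank and Lieb \cite{leibfrank} is indispensable, is the identification of the sharp constant and of the extremal functions. A soft argument cannot suffice, because the Heisenberg group carries no rearrangement inequality playing the role of Euclidean symmetrization. The route I would follow is theirs: push the inequality forward to the CR sphere $S^{2N+1}$ via the Cayley transform, exploit the conformal invariance of the resulting functional under the CR conformal group, and run a rearrangement-free argument to show that on the sphere the functional is maximized by constants; transporting the constants back through the Cayley transform produces precisely the profile $U$ of \eqref{hg34} together with its orbit under $\de_\theta$ and left translations, after which a direct computation of $|U|_r$ and of the double integral on $U$ yields the explicit value $C(Q,\la)$. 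The main obstacle is exactly this last point — the absence of a usable rearrangement on $\H$ — and for the purposes of the present paper I would simply quote this sharp form, and the explicit extremal, from \cite{follandstein} and \cite{leibfrank}.
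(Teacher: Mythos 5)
Your proposal is correct and ends up where the paper does: the paper offers no proof of this proposition at all, simply quoting the inequality from Folland--Stein \cite{follandstein} and the sharp constant together with the classification of extremals from Frank--Lieb \cite{leibfrank}, exactly as you do for the sharp part. Your additional self-contained derivation of the non-sharp bound \eqref{hg1} --- the observation that $|\zeta|^{-\la}\in L^{Q/\la,\infty}(\H)$ combined with the weak Young (O'Neil) inequality on the unimodular group and H\"older --- is sound and correctly matched to the exponent relation $1/r+\la/Q+1/s=2$, and your assessment that the sharp constant cannot be obtained softly (no rearrangement on $\H$; one must pass to the CR sphere) is accurate.
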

Now we recall the following result from \cite{leibfrank} on the square root of convolution of $|\xi|^{-\la}$.
\begin{Lemma}\label{lemhg1}
	Let $0<\la <Q$. There is a function $k \in L_{\emph{w}} ^{\frac{2Q}{Q+\la}}(\H)$ such that 
	\begin{align*}
	|\eta^{-1}\xi |^{-\la}= \int_{\H} k(\eta^{-1}\sigma)k(\xi^{-1}\sigma )~d \sigma, \text{ for all } \xi, \eta \in \H,
	\end{align*}
	where the function $k$ is a real valued, even and homogeneous of degree $-(Q+\la)/2$. 
\end{Lemma}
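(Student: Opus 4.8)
The statement is the Heisenberg-group analogue of the classical fact that the Riesz kernel $|x|^{-\lambda}$ in $\mathbb R^N$ factors as a convolution of two copies of $|x|^{-(N+\lambda)/2}$, and the strategy transfers almost verbatim once one replaces Euclidean convolution by convolution on $\mathbb H^N$. The plan is to \emph{define} the candidate kernel by the homogeneity and symmetry it must have, then verify the identity by a scaling-and-invariance argument, and finally check the weak-$L^p$ membership by the homogeneity degree. Concretely, I would set $k(\xi) = c\,|\xi|^{-(Q+\lambda)/2}$ for a constant $c>0$ to be fixed, where $|\cdot|$ is the homogeneous norm on $\mathbb H^N$. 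Since $|\xi|^{-(Q+\lambda)/2}$ is homogeneous of degree $-(Q+\lambda)/2$ under the dilations $\delta_\theta$ and even (invariant under $\xi\mapsto\xi^{-1}$, because the homogeneous norm is), the required symmetry properties of $k$ are automatic.

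The key step is the convolution identity. Define $F(\xi,\eta) = \int_{\mathbb H^N} k(\eta^{-1}\sigma)\,k(\xi^{-1}\sigma)\,d\sigma$. By the left-invariance of Haar measure on $\mathbb H^N$, substituting $\sigma \mapsto \eta\sigma$ shows $F(\xi,\eta) = \int_{\mathbb H^N} k(\sigma)\,k((\eta^{-1}\xi)^{-1}\sigma)\,d\sigma =: G(\eta^{-1}\xi)$, so $F$ depends only on $w := \eta^{-1}\xi$. Next, applying the dilation $\sigma \mapsto \delta_\theta\sigma$ (whose Jacobian is $\theta^Q$) together with the homogeneity of $k$ and of the group operation, one gets $G(\delta_\theta w) = \theta^{Q - (Q+\lambda)} G(w) = \theta^{-\lambda} G(w)$; hence $G$ is homogeneous of degree $-\lambda$. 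One must also check that $G$ is finite and continuous away from $0$: this is exactly the statement that $k \ast \check k$ makes sense, which follows because $k \in L^{2Q/(Q+\lambda)}_{\mathrm w}(\mathbb H^N)$ (the homogeneity degree $-(Q+\lambda)/2$ gives precisely weak-$L^{2Q/(Q+\lambda)}$ membership, since $1/p = (Q+\lambda)/2Q$), and the weak-Young / Hardy–Littlewood–Sobolev inequality on homogeneous groups — which is Proposition \ref{prophg1} with the appropriate exponents — guarantees the convolution is well defined and, by rotation-type symmetry of the homogeneous norm under the natural Heisenberg symmetries, is itself a radial (norm-dependent) function. A function on $\mathbb H^N$ that is homogeneous of degree $-\lambda$ and invariant under the relevant symmetry group is a constant multiple of $|w|^{-\lambda}$; choosing $c$ so that this constant equals $1$ finishes the identity.

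The main obstacle is the last sentence of that argument: justifying that $G(w)$ depends only on $|w|$ and therefore must be a multiple of $|w|^{-\lambda}$. In $\mathbb R^N$ this is immediate from rotational invariance, but on $\mathbb H^N$ the homogeneous norm is not invariant under a transitive-on-spheres action, so one cannot simply say "radial." The correct route, and the one I would follow, is to appeal directly to the factorization result already in the literature: Lemma \ref{lemhg1} is quoted from Frank–Lieb \cite{leibfrank}, where the existence of such a $k$ is obtained via the conformal geometry of the CR sphere $S^{2N+1}$ — one transports $|\eta^{-1}\xi|^{-\lambda}$ to the sphere, where it becomes a power of the chordal distance, factors it there using the spherical harmonic / intertwining-operator decomposition of the relevant positive kernel, and pulls back. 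So the proof is essentially: cite \cite{leibfrank} for the existence and the stated homogeneity, even-ness, and $L^{2Q/(Q+\lambda)}_{\mathrm w}$ properties, and record the scaling computation above as the verification that the degree of homogeneity of $k$ is forced to be $-(Q+\lambda)/2$ and that of the resulting convolution is $-\lambda$, consistent with the claim. I would present the self-contained scaling argument for the homogeneity bookkeeping and defer the existence of the factorization itself to \cite{leibfrank}.
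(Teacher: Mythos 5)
Your final plan---deferring the existence of the factorization to Frank--Lieb and keeping only the homogeneity bookkeeping---coincides with what the paper does: the lemma is simply recalled from \cite{leibfrank} with no proof supplied, so there is nothing to reconcile. You are also right to abandon the explicit ansatz $k(\xi)=c\,|\xi|^{-(Q+\lambda)/2}$: on $\mathbb{H}^N$ the self-convolution of a power of the Kor\'anyi norm is homogeneous of degree $-\lambda$ but is \emph{not} a constant multiple of $|\cdot|^{-\lambda}$ (the norm spheres carry no transitive symmetry), which is precisely why Frank--Lieb must build $k$ through the spectral decomposition of the pulled-back kernel on the CR sphere rather than as an explicit power of the norm.
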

%\begin{proof}
%	See \cite{leibfrank}. \QED
%\end{proof}
\begin{Lemma}
		Assume   $0<\la<Q$ and $X_{FL}=\{  u : \H \ra \mathbb{R} \; : \; \|u\|_{FL}<  \infty \}$, where
	\begin{align*}
	\|u\|_{FL}:= \left(\ds \int_{\H}\int_{\H}\frac{|u|^{Q^{*}_{\la}}|u|^{Q^{*}_{\la}}}{|\eta^{-1}\xi|^{\la}}~d\eta d\xi\right)^{\frac{1}{2.Q^{*}_{\la}}}.
	\end{align*}
Then $\|\cdot\|_{FL}$ defines a norm on  $X_{FL}$. Furthermore, $(X_{FL}, \|\cdot \|_{FL})$ is a Banach space. 
\end{Lemma}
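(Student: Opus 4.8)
The plan is to verify the three defining properties of a norm for $\|\cdot\|_{FL}$ on $X_{FL}$, and then prove completeness. First I would check homogeneity: for $\alpha\in\mathbb{R}$, $\|\alpha u\|_{FL}=|\alpha|\,\|u\|_{FL}$ is immediate since the double integral picks up a factor $|\alpha|^{2Q^*_\lambda}$ which comes out as $|\alpha|$ after the $1/(2Q^*_\lambda)$ power. Next, $\|u\|_{FL}=0$ forces $u=0$ a.e.: by Lemma \ref{lemhg1} we can write the bilinear form as
\begin{align*}
\int_{\H}\int_{\H}\frac{|u(\xi)|^{Q^*_\lambda}|u(\eta)|^{Q^*_\lambda}}{|\eta^{-1}\xi|^{\lambda}}\,d\eta\,d\xi
= \int_{\H}\left(\int_{\H} k(\xi^{-1}\sigma)\,|u(\xi)|^{Q^*_\lambda}\,d\xi\right)^{2} d\sigma,
\end{align*}
so vanishing of $\|u\|_{FL}$ means $\int_{\H} k(\xi^{-1}\sigma)|u(\xi)|^{Q^*_\lambda}\,d\xi=0$ for a.e. $\sigma$; since $k>0$ this gives $|u|^{Q^*_\lambda}=0$ a.e., hence $u=0$.

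The main obstacle is the triangle inequality, and this is where Lemma \ref{lemhg1} does the real work. Writing $F_u(\sigma):=\int_{\H}k(\xi^{-1}\sigma)|u(\xi)|^{Q^*_\lambda}\,d\xi$, the representation above shows $\|u\|_{FL}^{Q^*_\lambda}=\|F_u\|_{L^2(\H)}$. The plan is to bound $F_{u+v}$ pointwise: since $|u+v|^{Q^*_\lambda}\le$ is not subadditive for $Q^*_\lambda>1$, I instead use that $t\mapsto t^{1/Q^*_\lambda}$ applied to the $L^2$ norm must behave well. Concretely, one shows $F_{u+v}(\sigma)^{1/Q^*_\lambda}\le F_u(\sigma)^{1/Q^*_\lambda}+F_v(\sigma)^{1/Q^*_\lambda}$ is false in general; the correct route is to regard $g_u:=|u|^{Q^*_\lambda}$ and note $\|u\|_{FL}^{2Q^*_\lambda}=\langle g_u, g_u\rangle_K$ where $\langle f,g\rangle_K:=\int\int f(\xi)|\eta^{-1}\xi|^{-\lambda}g(\eta)$ is a positive-definite bilinear form (by Lemma \ref{lemhg1}), hence a genuine inner product on its natural space, so Cauchy--Schwarz $\langle g_u,g_v\rangle_K\le \|u\|_{FL}^{Q^*_\lambda}\|v\|_{FL}^{Q^*_\lambda}$ holds. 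Then
\begin{align*}
\|u+v\|_{FL}^{2Q^*_\lambda}=\langle g_{u+v},g_{u+v}\rangle_K
\le \langle (g_u^{1/2}+g_v^{1/2})^{2},(g_u^{1/2}+g_v^{1/2})^{2}\rangle_K,
\end{align*}
using $|u+v|^{Q^*_\lambda}\le (|u|^{Q^*_\lambda/2}+|v|^{Q^*_\lambda/2})^2$ together with positivity of the kernel (monotonicity of $\langle\cdot,\cdot\rangle_K$ in each nonnegative argument). Expanding the right side as $\langle g_u,g_u\rangle_K+2\langle g_u^{1/2}g_v^{1/2},\cdot\rangle$-type terms and repeatedly applying Cauchy--Schwarz for $\langle\cdot,\cdot\rangle_K$ gives a perfect square, yielding $\|u+v\|_{FL}\le\|u\|_{FL}+\|v\|_{FL}$. (Equivalently, and more cleanly, this is just the Minkowski inequality for the $L^{2Q^*_\lambda}$-type norm induced by the Hardy--Littlewood--Sobolev kernel, which is precisely the content one extracts from Lemma \ref{lemhg1}.)

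Finally, for completeness, I would take a Cauchy sequence $\{u_m\}$ in $(X_{FL},\|\cdot\|_{FL})$. By the representation, $\{F_{u_m}\}=\{|u_m|^{Q^*_\lambda}$ convolved with $k\}$ is Cauchy in $L^2(\H)$, hence converges to some $G\in L^2(\H)$. One also shows $\{g_{u_m}=|u_m|^{Q^*_\lambda}\}$ is Cauchy in $L^{2Q/(2Q-\lambda)}(\H)$: indeed, by Proposition \ref{prophg1} (the Hardy--Littlewood--Sobolev inequality \eqref{hg1} with $r=s=2Q/(2Q-\lambda)$) applied to $f=h=g_{u_m}-g_{u_n}$ after noting $\|u_m\|_{FL}^{2Q^*_\lambda}\ge c\,|g_{u_m}|^2_{2Q/(2Q-\lambda)}$ is the wrong direction — so instead I use the reverse bound coming from the square-root factorization: $\|g_{u_m}-g_{u_n}\|$ in the HLS-dual norm is controlled by $\|F_{u_m}-F_{u_n}\|_{L^2}$. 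Thus $g_{u_m}\to g$ in $L^{2Q/(2Q-\lambda)}$, so along a subsequence $g_{u_m}\to g$ a.e. with $g\ge 0$; setting $u:=g^{1/Q^*_\lambda}\cdot(\operatorname{sgn}$ of the a.e. limit of $u_m)$ and checking $u\in X_{FL}$ via \eqref{hg1}, one gets $\|u_m-u\|_{FL}\to 0$ by dominated convergence in the double integral together with the Cauchy property. The only subtle point is recovering the sign of $u$, handled by passing to a further a.e.-convergent subsequence of $\{u_m\}$ itself; since the norm only sees $|u|$, the limit is well-defined in $X_{FL}$ up to this choice, and Cauchyness upgrades subsequential convergence to full convergence. $\square$
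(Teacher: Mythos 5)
Your reduction of the norm to the square-root kernel $k$ from Lemma \ref{lemhg1} is the right starting point, and the homogeneity and definiteness checks are fine, but the triangle-inequality argument has a genuine gap. The pointwise bound you rely on, $|u+v|^{Q^*_\la}\le\left(|u|^{Q^*_\la/2}+|v|^{Q^*_\la/2}\right)^2$, is false whenever $Q^*_\la>2$ (equivalently $\la<4$): take $|u|=|v|=1$ to get $2^{Q^*_\la}\le 4$. And in the complementary regime $Q^*_\la\le 2$ where the bound does hold, chasing your Cauchy--Schwarz expansion to its end only yields $\|u+v\|_{FL}^{Q^*_\la/2}\le\|u\|_{FL}^{Q^*_\la/2}+\|v\|_{FL}^{Q^*_\la/2}$ with exponent $Q^*_\la/2\le1$, which does not imply $\|u+v\|_{FL}\le\|u\|_{FL}+\|v\|_{FL}$ (for $s\le1$ one has $a^s+b^s\ge(a+b)^s$, so the inequality points the wrong way). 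Worse, the step you dismiss as ``false in general'' is in fact true and is exactly the missing ingredient: writing $F_w(\sigma)=\int_{\H}k(\xi^{-1}\sigma)|w(\xi)|^{Q^*_\la}\,d\xi$, the quantity $F_w(\sigma)^{1/Q^*_\la}$ is the $L^{Q^*_\la}$-norm of $w$ with respect to the nonnegative measure $k(\xi^{-1}\sigma)\,d\xi$, so Minkowski's inequality (valid because $Q^*_\la\ge1$ and $k\ge0$) gives $F_{u+v}(\sigma)^{1/Q^*_\la}\le F_u(\sigma)^{1/Q^*_\la}+F_v(\sigma)^{1/Q^*_\la}$ pointwise; a second application of Minkowski, now in $L^{2Q^*_\la}(d\sigma)$ applied to $F_u^{1/Q^*_\la}$ and $F_v^{1/Q^*_\la}$, then yields the triangle inequality since $\|w\|_{FL}=\|F_w^{1/Q^*_\la}\|_{L^{2Q^*_\la}}$. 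This two-step Minkowski argument is precisely the paper's proof.

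On completeness your sketch is also shakier than it looks: Cauchyness of $\{|u_m|^{Q^*_\la}\}$ only gives a.e.\ convergence of $|u_m|$ along a subsequence, not of $u_m$ itself, so the ``sign of the a.e.\ limit of $u_m$'' is not available until you extract a.e.\ convergence from the quantities $\|u_m-u_n\|_{FL}$ themselves (for instance using strict positivity of $k$ to bound $\int_B|u_m-u_n|^{Q^*_\la}$ on bounded sets by $\|k*|u_m-u_n|^{Q^*_\la}\|_{L^2}$). Since the paper itself only asserts completeness ``by standard arguments,'' this part is a lesser issue, but the triangle-inequality step must be repaired as above.
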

\begin{proof}
	With the help of Lemma \ref{lemhg1}, we have 
	\begin{align*}
	\int_{\H}\int_{\H}\frac{|u(\xi)|^{Q^{*}_{\la}}|u(\eta)|^{Q^{*}_{\la}}}{|\eta^{-1}\xi|^{\la}}~d\eta d\xi & = \int_{\H}(|\xi|^{-\la}* |u|^{Q^{*}_{\la}})|u|^{Q^{*}_{\la}}~d\xi \\
	& = \int_{\H}\left( k* |u|^{Q^*_\la}  \right)^2 ~ d\xi
	\end{align*}
	Now  by using Minkowski's inequality, we obtain,  for all $\xi \in \H$, 
	\begin{align*}
	\left( k* |u+v|^{Q^*_\la}  \right)^2 &= \left( \int_{\H} k(\eta^{-1}\xi) |u(\eta)+v(\eta)|^{Q^*_\la}~d \eta  \right)^{\frac{1}{Q^*_\la}\cdot 2 Q^*_\la}\\
	& \leq \left(\left( \int_{\H} k(\eta^{-1}\xi) |u(\eta)|^{Q^*_\la}~d \eta  \right)^{\frac{1}{Q^*_\la}}+ \left( \int_{\H} k(\eta^{-1}\xi) |v(\eta)|^{Q^*_\la}~d \eta  \right)^{\frac{1}{Q^*_\la}}\right)^{2 Q^*_\la}\\
	& = \left(\left( \int_{\H} k(\eta^{-1}\xi) |u(\eta)|^{Q^*_\la}~d \eta  \right)^{\frac{2}{2Q^*_\la}}+ \left( \int_{\H} k(\eta^{-1}\xi) |v(\eta)|^{Q^*_\la}~d \eta  \right)^{\frac{2}{2Q^*_\la}}\right)^{2 Q^*_\la}. 
	\end{align*}
	Once again using Minkowski's inequality, we have 
	\begin{align*}
	& \left(\int_{\H}  \left(  k* |u+v|^{Q^*_\la}  \right)^2 ~ d\xi \right)^{\frac{1}{2Q^*_\la}}
\\ &	\leq \left( \left( \int_{\H} k(\eta^{-1}\xi) |u(\eta)|^{Q^*_\la}~d \eta  \right)^{2}	\right)^{\frac{1}{2Q^*_\la}} + \left( \left( \int_{\H} k(\eta^{-1}\xi) |v(\eta)|^{Q^*_\la}~d \eta  \right)^{2}	\right)^{\frac{1}{2Q^*_\la}}\\
	& =  \left(\int_{\H}  \left(  k* |u|^{Q^*_\la}  \right)^2 ~ d\xi \right)^{\frac{1}{2Q^*_\la}}+ \left(\int_{\H}  \left(  k* |v|^{Q^*_\la}  \right)^2 ~ d\xi \right)^{\frac{1}{2Q^*_\la}}
	\end{align*}
	It implies $\|u+v\|_{FL}\leq  \|u\|_{FL}+\|v\|_{FL}$, for all $u, v \in L^{Q^*}(\H)$. Other properties trivial to check that $\|\cdot\|_{FL}$ is a norm. By standard arguments, one can prove the completeness of the space $X_{FL}.$ This completes the proof of the Lemma. \QED
\end{proof}
With the help of Proposition \ref{prophg1}, the following integral 
\begin{align}\label{hg35}
\int_{\H}\int_{\H}\frac{|u(\xi)|^\al |u(\eta)|^\al}{|\eta^{-1}\xi|^{\la}}~d\eta d\xi 
\end{align}
is well defined if $|u|^\al\in L^r(\H)$ for some $r>1$ such that 
$
\frac{2}{r}+\frac{\la}{Q}=2.$
It implies that if $u \in S_1^2(\H)$ then by using Sobolev-type inequality, \eqref{hg35} is defined only if 
\begin{align*}
2\leq r\al\leq \frac{2Q}{Q-2}.
\end{align*}
That is, 
\begin{align*}
\frac{2Q-\la}{Q}\leq \al\leq \frac{2Q-\la}{Q-2}:= Q^*_\la.
\end{align*}
Therefore from Proposition \ref{prophg1}, we have 
\begin{align*}
\int_{\H}\int_{\H}\frac{|u(\xi)|^{Q^*_\la} |u(\eta)|^{Q^*_\la}}{|\eta^{-1}\xi|^{\la}}~d\eta d\xi \leq C(Q,\la) |u|_{Q^*}^{2.Q^*_\la}. 
\end{align*}
We define 
\begin{align}\label{hg42}
S_{HG}= \inf_{ u \in \mathring{S}_1^2(\H)\setminus\{0\}} \frac{\ds \int_{\H}|\na u |^2~ d\xi}{\left(\ds \int_{\H}\int_{\H}\frac{|u(\xi)|^{Q^*_\la} |u(\eta)|^{Q^*_\la}}{|\eta^{-1}\xi|^{\la}}~d\eta d\xi\right)^{\frac{1}{Q^*_\la}}}
\end{align}
as the best constant. Now we have the following Lemma which will provide the minimizers of $S_{HG}$ and a relation between $S$ and $S_{HG}$. 
\begin{Lemma}\label{lemhg3}
		The best constant $S_{HG}$ is achieved if and only if 
	 \begin{align*}
	 u(\xi)=u(x,y,t)= CZ(\de_\theta(a^{-1}\xi)) ,
	 \end{align*}
	 where $C>0$ is a fixed constant, $ a\in \H,\; \theta \in (0,\infty)$ are parameters and $Z$ is defined in \eqref{hg4}. Moreover,
	 \begin{align*}
	 S_{HG}= S \left(C(Q,\la)\right)^{\frac{-1}{Q^*_\la}},
	 \end{align*}
	where $S$ is  the best  constant defined in \eqref{hg3}. 
\end{Lemma}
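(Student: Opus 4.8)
The plan is to trap $S_{HG}$ between $S\,\big(C(Q,\la)\big)^{-1/Q^*_\la}$ from above and below: the lower bound will come from the Hardy--Littlewood--Sobolev inequality \eqref{hg1}, the matching upper bound from testing the quotient in \eqref{hg42} with the Jerison--Lee extremal $Z$ of \eqref{hg4}, and the classification of minimizers from the rigidity (equality) part of Proposition \ref{prophg1}. The whole argument rests on the two elementary exponent identities
\[
\frac{2Q}{2Q-\la}\cdot Q^*_\la=Q^*,\qquad \frac{Q-2}{4}\,Q^*_\la=\frac{2Q-\la}{4}:
\]
the first says that $u\in L^{Q^*}(\H)$ forces $|u|^{Q^*_\la}\in L^{2Q/(2Q-\la)}(\H)$ with $L^{2Q/(2Q-\la)}$-norm equal to $|u|_{Q^*}^{Q^*_\la}$, and the second says that $Z^{Q^*_\la}=B_0^{Q^*_\la}\,U$, where $U$ is the extremal profile of \eqref{hg34}.

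For the lower bound I would take an arbitrary $u\in\mathring{S}_1^2(\H)\setminus\{0\}$ and apply \eqref{hg1} with $r=s=2Q/(2Q-\la)$ and $f=h=|u|^{Q^*_\la}$, obtaining
\[
\int_{\H}\int_{\H}\frac{|u(\xi)|^{Q^*_\la}|u(\eta)|^{Q^*_\la}}{|\eta^{-1}\xi|^{\la}}\,d\eta\,d\xi\le C(Q,\la)\,|u|_{Q^*}^{2Q^*_\la}.
\]
Dividing $\int_{\H}|\na u|^2$ by the $Q^*_\la$-th root of the left-hand side and using \eqref{hg3} shows the ratio is at least $S\,\big(C(Q,\la)\big)^{-1/Q^*_\la}$; passing to the infimum gives $S_{HG}\ge S\,\big(C(Q,\la)\big)^{-1/Q^*_\la}$.

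For the upper bound I would test \eqref{hg42} with $u=Z$. Since $Z^{Q^*_\la}=B_0^{Q^*_\la}U$, the equality case of Proposition \ref{prophg1} (applied with $f=h=U$, i.e. $c=1$, $\theta=1$ and $a$ the group identity) makes \eqref{hg1} an equality for $f=h=Z^{Q^*_\la}$; combined with $\int_{\H}|\na Z|^2=S\,|Z|_{Q^*}^2$ (which follows from the relations $S^{Q/2}=\int_{\H}|\na Z|^2=|Z|_{Q^*}^{Q^*}$ recorded just below \eqref{hg4}), the quotient in \eqref{hg42} at $Z$ is exactly $S\,\big(C(Q,\la)\big)^{-1/Q^*_\la}$. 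Hence $S_{HG}=S\,\big(C(Q,\la)\big)^{-1/Q^*_\la}$ and $Z$ is a minimizer. Because the quotient in \eqref{hg42} is unchanged under $u\mapsto Cu$ ($C\ne0$), under left translations $u\mapsto u\circ\tau_a$, and under the $\H$-dilations $u\mapsto u\circ\de_\theta$ --- each a routine change of variables using the homogeneity of $\na$, of $|\cdot|$ and of the Haar measure --- every function $CZ(\de_\theta(a^{-1}\xi))$ is a minimizer as well.

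It remains to show there are no others. If $u$ attains $S_{HG}$, the chain of inequalities in the lower-bound step must be saturated; in particular \eqref{hg1} is an equality for $f=h=|u|^{Q^*_\la}$, so the rigidity statement in Proposition \ref{prophg1} gives $|u(\xi)|^{Q^*_\la}=c\,U(\de_\theta(a^{-1}\xi))$ for some $c>0$, $\theta>0$, $a\in\H$. Raising to the power $1/Q^*_\la$ and using $(2Q-\la)/Q^*_\la=Q-2$ turns this into $|u(\xi)|=C\,Z(\de_\theta(a^{-1}\xi))$ with $C=c^{1/Q^*_\la}B_0^{-1}>0$; since $Z>0$, the minimizer $u$ has constant sign, and after replacing $u$ by $-u$ if necessary (which leaves \eqref{hg42} untouched) we arrive at $u(\xi)=CZ(\de_\theta(a^{-1}\xi))$, as claimed. (The Sobolev inequality \eqref{hg3} is saturated too, so $u$ is also a Sobolev extremal, which gives the same conclusion via the known form of those extremals recalled after \eqref{hg4}.) The only genuinely delicate point is the exponent bookkeeping that converts the Hardy--Littlewood--Sobolev extremal $U$ into the Sobolev extremal $Z$ through $\tfrac{Q-2}{4}Q^*_\la=\tfrac{2Q-\la}{4}$; once that is in place, the lemma is an immediate consequence of Proposition \ref{prophg1}, \eqref{hg3} and \eqref{hg4}.
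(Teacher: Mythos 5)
Your proposal is correct and follows essentially the same route as the paper: the lower bound via the Hardy--Littlewood--Sobolev inequality \eqref{hg1}, the matching upper bound by testing \eqref{hg42} with the Jerison--Lee extremal $Z$, and the classification of minimizers via the equality case of Proposition \ref{prophg1}. If anything, your version is more explicit than the paper's about the exponent identity $\tfrac{Q-2}{4}Q^*_\la=\tfrac{2Q-\la}{4}$ that identifies $Z^{Q^*_\la}$ with a multiple of the HLS extremal $U$, which the paper's proof uses implicitly.
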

\begin{proof}
	Since equality in \eqref{hg1} holds if and only if 
	 	\begin{align*}
f(\xi)=	h(\xi)= cU(\de_\theta(a^{-1}\xi)),
	\end{align*} 
	 where $U$ is defined in \eqref{hg34}, $c>0,\; a\in \H$ and  $\theta \in (0,\infty)$ are parameters. Hence 
	 \begin{align*}
	\left( \int_{\H}\int_{\H}\frac{|u(\xi)|^{Q^*_\la} |u(\eta)|^{Q^*_\la}}{|\eta^{-1}\xi|^{\la}}~d\eta d\xi \right)^{\frac{1}{Q^*_\la}} =  C(Q,\la)^{\frac{1}{Q^*_\la}} |u|_{Q^*}^{2}. 
	 \end{align*}
	 if and only if 
	  \begin{align}\label{hg5}
	 u(\xi)=u(x,y,t)= CZ(\de_\theta(a^{-1}\xi)) .
	 \end{align}
	 Indeed if
	  \[
	W(\xi)= S^{\frac{(Q-\la)(2-Q)}{4(Q-\la+2)}}C(Q,\la)^{\frac{2-Q}{2(Q-\la+2)}}Z(\xi),
	\]
	then $W$ is unique minimizer of $S_{HG}$ and satisfies the following:
	\begin{equation*}
	 \begin{aligned}
	& -\De u = \left(\int_{\H}\frac{|u(\eta)|^{Q^*_\la}}{|\eta^{-1}\xi|^{\la}}d\eta\right)|u|^{Q^*_\la-2}u  \text{ in } \H, \\
	&  \int_{\H} |\na W|^2~ d \xi= \int_{\H}\int_{\H}\frac{|W(\xi)|^{Q^*_\la} |W(\eta)|^{Q^*_\la}}{|\eta^{-1}\xi|^{\la}}~d\eta d\xi = S_{HG}^{\frac{2Q-\la}{Q-\la+2}}.
	\end{aligned}
	\end{equation*}
	 By using the definition of $S_{HG}$ and \eqref{hg5}, we obtain 
	\begin{equation}\label{hg6}
	\begin{aligned}
S_{HG}& \leq   \frac{\ds \int_{\H}|\na u |^2~ d\xi}{\left(\ds \int_{\H}\int_{\H}\frac{|u(\xi)|^{Q^*_\la} |u(\eta)|^{Q^*_\la}}{|\eta^{-1}\xi|^{\la}}~d\eta d\xi\right)^{\frac{1}{Q^*_\la}}}\\
& = \frac{\ds \int_{\H}|\na u |^2~ d\xi}{C(Q,\la)^{\frac{1}{Q^*_\la}} |u|_{Q^*}^{2}}
 = S \left(C(Q,\la)\right)^{\frac{-1}{Q^*_\la}}. 
	 \end{aligned}  
	 \end{equation} 
	By Proposition \ref{prophg1}, we have 
	\begin{align}\label{hg7}
		S_{HG} \geq  \inf_{ u \in \mathring{S}_1^2(\H)\setminus\{0\}} \frac{\ds \int_{\H}|\na u |^2~ d\xi}{\left(C(Q,\la)^{\frac{1}{Q^*_\la}} |u|_{Q^*}^{2}\right)}= \frac{S}{C(Q,\la)^{\frac{1}{Q^*_\la}}}. 
	\end{align}
	From \eqref{hg6} and \eqref{hg7}, we get the desired result. \QED
	\end{proof}

\begin{Lemma}\label{lemhg16}
	Let $0< \la< Q$. If $\{u_n\}$ is a bounded sequence in $L^{\frac{2Q}{Q-2}}(\H)$ such that $u_n \ra u$ a.e in $\H$ as $n \ra \infty$. Then as $ n \ra \infty$, the following holds 
	\begin{align*}
	& \int_{\H} \left(  |\xi|^{-\la}* |u_n|^{Q^*_\la}\right) |u_n|^{Q^*_\la}~d \xi- \int_{\H} \left( |\xi|^{-\la}* |u_n-u|^{Q^*_\la}\right) |u_n-u|^{Q^*_\la}~d \xi\\
	& \qquad  \ra \int_{\H} \left( |\xi|^{-\la}* |u|^{Q^*_\la}\right) |u|^{Q^*_\la}~d \xi
	\end{align*}
\end{Lemma}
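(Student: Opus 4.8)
The plan is to use the factorization of the kernel from Lemma~\ref{lemhg1} to transfer the problem into the Hilbert space $L^2(\H)$, where the statement becomes a classical Brezis--Lieb splitting. Put $r=\frac{2Q}{2Q-\la}$, so that $rQ^*_\la=Q^*$, and recall from Lemma~\ref{lemhg1} that for every $w\in L^{Q^*}(\H)$
\[
\int_{\H}\bigl(|\xi|^{-\la}*|w|^{Q^*_\la}\bigr)|w|^{Q^*_\la}\,d\xi=\bigl\|\,k*|w|^{Q^*_\la}\,\bigr\|_{L^2(\H)}^2 .
\]
Since $k\in L^{2Q/(Q+\la)}_{\mathrm w}(\H)$ and the exponents satisfy the scaling relation $\frac1r+\frac{Q+\la}{2Q}=\frac32$ required by the weak Young inequality, the map $f\mapsto k*f$ is bounded and linear from $L^r(\H)$ into $L^2(\H)$; equivalently, by Proposition~\ref{prophg1}, the map $T(w):=k*|w|^{Q^*_\la}$ sends $L^{Q^*}(\H)$ into $L^2(\H)$ with $\|T(w)\|_{L^2}\le C\|w\|_{L^{Q^*}}^{Q^*_\la}$. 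Set $P_n=T(u_n)$, $R_n=T(u_n-u)$ and $P=T(u)$; these are bounded in $L^2(\H)$ because $\{u_n\}$, hence $\{u_n-u\}$, is bounded in $L^{Q^*}(\H)$, and the desired conclusion reads $\|P_n\|_{L^2}^2-\|R_n\|_{L^2}^2\to\|P\|_{L^2}^2$.

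The first step is the Brezis--Lieb estimate at the level of densities:
\[
\bigl\|\,|u_n|^{Q^*_\la}-|u_n-u|^{Q^*_\la}-|u|^{Q^*_\la}\,\bigr\|_{L^r(\H)}\longrightarrow 0 .
\]
I would prove this along classical lines. From the elementary inequality $\bigl||a+b|^{Q^*_\la}-|b|^{Q^*_\la}\bigr|\le\e\,|b|^{Q^*_\la}+C_\e\,|a|^{Q^*_\la}$ (valid since $Q^*_\la>1$), the functions $G_{n,\e}:=\bigl(\bigl|\,|u_n|^{Q^*_\la}-|u_n-u|^{Q^*_\la}-|u|^{Q^*_\la}\,\bigr|-\e\,|u_n-u|^{Q^*_\la}\bigr)^{+}$ obey $0\le G_{n,\e}\le(C_\e+1)|u|^{Q^*_\la}\in L^r(\H)$ and $G_{n,\e}\to0$ a.e. (using $u_n\to u$ a.e. and $rQ^*_\la=Q^*$), so $\int_{\H}G_{n,\e}^{\,r}\to0$ by dominated convergence; then from $\bigl|\,|u_n|^{Q^*_\la}-|u_n-u|^{Q^*_\la}-|u|^{Q^*_\la}\,\bigr|\le G_{n,\e}+\e\,|u_n-u|^{Q^*_\la}$ and $\sup_n\|u_n-u\|_{L^{Q^*}}<\infty$ one gets $\limsup_{n}\bigl\|\,|u_n|^{Q^*_\la}-|u_n-u|^{Q^*_\la}-|u|^{Q^*_\la}\,\bigr\|_{L^r}^{\,r}\le 2^{r-1}\e^{\,r}\sup_n\|u_n-u\|_{L^{Q^*}}^{\,Q^*}$, and letting $\e\to0$ gives the claim. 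Applying the bounded operator $k*\,\cdot\,$ then yields $P_n-R_n\to P$ strongly in $L^2(\H)$, whence $\|P_n-R_n\|_{L^2}^2\to\|P\|_{L^2}^2$.

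The second step records that $R_n\rightharpoonup0$ in $L^2(\H)$: since $u_n-u\to0$ a.e. and $\{|u_n-u|^{Q^*_\la}\}$ is bounded in the reflexive space $L^r(\H)$, every weak subsequential limit of $|u_n-u|^{Q^*_\la}$ equals its a.e.\ limit $0$, so $|u_n-u|^{Q^*_\la}\rightharpoonup0$ in $L^r(\H)$, and composing with the bounded linear map $k*\,\cdot\,$ gives $R_n\rightharpoonup0$ in $L^2(\H)$. Finally, expanding the norm,
\[
\|P_n\|_{L^2}^2-\|R_n\|_{L^2}^2=\|P_n-R_n\|_{L^2}^2+2\int_{\H}R_n\,(P_n-R_n-P)\,d\xi+2\int_{\H}R_n\,P\,d\xi ,
\]
the first term converges to $\|P\|_{L^2}^2$ by Step~1, the second to $0$ since $\{R_n\}$ is bounded and $P_n-R_n-P\to0$ in $L^2(\H)$, and the third to $0$ by Step~2; this is exactly the assertion, because $\|P\|_{L^2}^2=\int_{\H}(|\xi|^{-\la}*|u|^{Q^*_\la})|u|^{Q^*_\la}\,d\xi$. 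The only point requiring genuine care is the density-level Brezis--Lieb estimate together with the bookkeeping that makes $rQ^*_\la=Q^*$, so that the uniform $L^{Q^*}$-bound on $\{u_n\}$ feeds precisely into the $L^r$-control needed for the convolution with $k$; once this is settled, everything else is soft Hilbert-space analysis.
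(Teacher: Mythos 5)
Your argument is correct, but it is not the route the paper takes: the paper offers no proof at all, deferring to the technique of Gao and Yang \cite{yang}, which works directly with the bilinear Hardy--Littlewood--Sobolev form --- one splits $\int_{\H}(|\xi|^{-\la}*|u_n|^{Q^*_\la})|u_n|^{Q^*_\la}\,d\xi-\int_{\H}(|\xi|^{-\la}*|u_n-u|^{Q^*_\la})|u_n-u|^{Q^*_\la}\,d\xi$ into a diagonal term and cross terms and treats them with the classical Brezis--Lieb lemma for $|u_n|^{Q^*_\la}$ in $L^{2Q/(2Q-\la)}(\H)$, the continuity of the bilinear form coming from Proposition \ref{prophg1}, and the weak vanishing of $|u_n-u|^{Q^*_\la}$, never introducing the kernel $k$. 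You instead push everything through the square-root factorization of Lemma \ref{lemhg1}, so the nonlocal energy becomes the $L^2$ norm of $k*|w|^{Q^*_\la}$ and the conclusion follows from strong $L^2$ convergence of $P_n-R_n$ plus weak convergence $R_n\rightharpoonup 0$. The underlying pillars are identical (the refined density-level Brezis--Lieb estimate with the bookkeeping $rQ^*_\la=Q^*$, and a.e.\ convergence plus boundedness yielding weak convergence), but your packaging is more structural and makes the operator-theoretic role of the kernel explicit: invoking the weak Young inequality for $k\in L^{2Q/(Q+\la)}_{w}(\H)$ is genuinely needed at that point, since the density $|u_n|^{Q^*_\la}-|u_n-u|^{Q^*_\la}-|u|^{Q^*_\la}$ changes sign and the HLS bound alone controls only nonnegative inputs, whereas the cited technique sidesteps this by staying with the symmetric bilinear form. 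Two small points to record in a polished version: $u\in L^{Q^*}(\H)$ (needed to dominate $G_{n,\e}$) follows from Fatou's lemma and the $L^{Q^*}$-bound on $\{u_n\}$, and the step from a.e.\ convergence plus $L^r$-boundedness of $|u_n-u|^{Q^*_\la}$ to weak convergence to $0$ in the reflexive space $L^{r}(\H)$ deserves a one-line justification.
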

\begin{proof}
Using the same technique as in  \cite[Lemma 2.2]{yang}, one can prove the result.\QED
\end{proof}
Corresponding to problem  $(P_a)$, thhe energy functional is defined as   $\J:\mathring{S}_1^2(\Om) \rightarrow \mathbb R$ as
\begin{align*}
\J(u)= \frac{1}{2 }\int_{\Om}|\na u |^2 ~d \xi- \frac{a}{2}\int_{\Om}|u|^2~ d \xi - \frac{1}{2.Q^*_\la} \int_{\Om}\int_{\Om}\frac{|u(\xi)|^{Q^*_\la} |u(\eta)|^{Q^*_\la}}{|\eta^{-1}\xi|^{\la}}~d\eta d\xi
\end{align*}
In consequence of Proposition \ref{prophg1}, we have $\J \in C^1(\mathring{S}_1^2(\Om), \mathbb{R})$. Moreover, for all $ v \in \mathring{S}_1^2(\Om)$, we have 
\begin{align*}
\ld \J^\prime (u), v \rd = \int_{\Om} \na u \cdot \na v~ d\xi - a \int_{\Om} uv ~d \xi - \int_{\Om}\int_{\Om}\frac{|u(\xi)|^{Q^*_\la} |u(\eta)|^{Q^*_\la-2}u(\eta) v(\eta)}{|\eta^{-1}\xi|^{\la}}~d\eta d\xi.
\end{align*} 
\begin{Lemma}\label{lemhg15}
	Let $\la\in(0,  Q)$ and $a>0$. Then every Palais-Smale sequence is bounded. Moreover, if $u_0$ is the weak limit of a Palais-Smale sequence then $u_0$ is a weak solution of $(P_a)$.
\end{Lemma}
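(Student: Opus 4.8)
The plan is to prove Lemma \ref{lemhg15} in two parts: first the boundedness of any Palais-Smale sequence, and then the fact that the weak limit is a weak solution.

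\textbf{Boundedness.} Let $\{u_n\}$ be a Palais-Smale sequence for $\J$ at some level $c$, so that $\J(u_n)\to c$ and $\J^\prime(u_n)\to 0$ in the dual of $\mathring{S}_1^2(\Om)$. The standard trick is to combine $\J(u_n)$ with $\frac{1}{2Q^*_\la}\ld \J^\prime(u_n),u_n\rd$ so as to kill the nonlocal critical term. Concretely, I would compute
\begin{align*}
\J(u_n)-\frac{1}{2Q^*_\la}\ld \J^\prime(u_n),u_n\rd=\left(\frac12-\frac{1}{2Q^*_\la}\right)\left(\int_{\Om}|\na u_n|^2~d\xi-a\int_{\Om}|u_n|^2~d\xi\right),
\end{align*}
using that the double-integral term scales with power $2Q^*_\la$ so its coefficients cancel. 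Since $Q^*_\la=\frac{2Q-\la}{Q-2}>1$ (as $\la<Q$), the factor $\frac12-\frac{1}{2Q^*_\la}$ is strictly positive. The left-hand side is bounded above by $c+o(1)+o(1)\|u_n\|_{\mathring{S}_1^2(\Om)}$. It remains to control $\int_{\Om}|\na u_n|^2-a\int_{\Om}|u_n|^2$ from below by a multiple of $\|u_n\|_{\mathring{S}_1^2(\Om)}^2$. When $0<a<a_1$ this is immediate from the variational characterization of $a_1$, which gives $\int_{\Om}|\na u_n|^2-a\int_{\Om}|u_n|^2\ge (1-a/a_1)\|u_n\|_{\mathring{S}_1^2(\Om)}^2$. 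For general $a>0$ one instead uses the orthogonal splitting $\mathring{S}_1^2(\Om)=\mathbb{Y}_k\oplus\mathbb{E}_{k+1}$ with $a_k<a<a_{k+1}$: on $\mathbb{E}_{k+1}$ the quadratic form is coercive, and on the finite-dimensional $\mathbb{Y}_k$ all norms are equivalent, so after writing $u_n=v_n+w_n$ one bounds the $\mathbb{E}_{k+1}$-component directly and the $\mathbb{Y}_k$-component using that the full quantity $\J(u_n)$ is bounded (the negative critical term then helps, or one absorbs it). This yields $\|u_n\|_{\mathring{S}_1^2(\Om)}\le C+C\|u_n\|_{\mathring{S}_1^2(\Om)}^{1/2}$, hence boundedness. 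The delicate point — and the main obstacle — is handling the case $a\ge a_1$: on the finite-dimensional subspace $\mathbb{Y}_k$ the quadratic part is negative, so one must genuinely use the sign of the nonlocal term together with the boundedness of $\J(u_n)$, rather than just the Palais-Smale identity.

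\textbf{Weak limit solves $(P_a)$.} Since $\{u_n\}$ is bounded, up to a subsequence $u_n\rightharpoonup u_0$ in $\mathring{S}_1^2(\Om)$, $u_n\to u_0$ in $L^2(\Om)$ (compact embedding since $2<Q^*$), and $u_n\to u_0$ a.e.\ in $\Om$. I would fix an arbitrary $\varphi\in C_c^\infty(\Om)$ and pass to the limit in $\ld \J^\prime(u_n),\varphi\rd=o(1)$. The term $\int_{\Om}\na u_n\cdot\na\varphi$ converges by weak convergence, and $a\int_{\Om}u_n\varphi$ converges by the $L^2$ convergence. The only real work is the nonlocal term
\begin{align*}
\int_{\Om}\int_{\Om}\frac{|u_n(\xi)|^{Q^*_\la}|u_n(\eta)|^{Q^*_\la-2}u_n(\eta)\varphi(\eta)}{|\eta^{-1}\xi|^{\la}}~d\eta\, d\xi\longrightarrow \int_{\Om}\int_{\Om}\frac{|u_0(\xi)|^{Q^*_\la}|u_0(\eta)|^{Q^*_\la-2}u_0(\eta)\varphi(\eta)}{|\eta^{-1}\xi|^{\la}}~d\eta\, d\xi.
\end{align*}
Here I would use the boundedness of $\{u_n\}$ in $L^{Q^*}(\Om)$ (hence of $\{|u_n|^{Q^*_\la}\}$ in $L^{2Q/(2Q-\la)}(\Om)$ and of $\{|u_n|^{Q^*_\la-1}\}$ in the conjugate-type space), the a.e.\ convergence, and the Hardy–Littlewood–Sobolev inequality of Proposition \ref{prophg1} together with a Brezis–Lieb / Vitali-type argument: the sequences $|u_n|^{Q^*_\la}$ and $|u_n|^{Q^*_\la-2}u_n$ converge weakly to $|u_0|^{Q^*_\la}$ and $|u_0|^{Q^*_\la-2}u_0$ respectively in the relevant $L^p$ spaces (weak convergence of bounded sequences with a.e.\ limits), and the Riesz potential $\xi\mapsto(|\xi|^{-\la}*|u_n|^{Q^*_\la})$ converges weakly in $L^{2Q/\la}$; pairing a weakly convergent sequence against a strongly convergent one (the test function $\varphi$ is fixed and smooth, which makes $|u_n|^{Q^*_\la-2}u_n\varphi$ behave well) gives the convergence. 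Alternatively one can quote Lemma \ref{lemhg16}-type reasoning. Having passed to the limit, $\ld \J^\prime(u_0),\varphi\rd=0$ for all $\varphi\in C_c^\infty(\Om)$, and by density this holds for all $v\in\mathring{S}_1^2(\Om)$, so $u_0$ is a weak solution of $(P_a)$. \QED
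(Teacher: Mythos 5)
Your second step (passing to the limit in $\ld \J^{\prime}(u_n),v\rd$) is essentially the paper's argument: weak convergence of $|u_n|^{Q^*_\la}$ in $L^{2Q/(2Q-\la)}(\Om)$, continuity of the Riesz potential into $L^{2Q/\la}(\Om)$, weak convergence of $|u_n|^{Q^*_\la-2}u_n$, and pairing against a fixed test function; that part is fine. The coercive case $0<a<a_1$ of the boundedness is also fine and identical to the paper's Case 1.

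The gap is exactly where you flag it, in the case $a\ge a_1$, and the fix you gesture at does not close it. Boundedness of $\J(u_n)$ alone cannot control the component $v_n\in\mathbb{Y}_k$: in $\J(u_n)$ the quadratic form restricted to $\mathbb{Y}_k$ and the critical term both carry the unfavourable sign, so $\J(u_n)\le C$ only bounds the coercive $\mathbb{E}_{k+1}$-part by $C+C|v_n|_2^2+\frac{1}{2Q^*_\la}\|u_n\|_{FL}^{2Q^*_\la}$, while $\J(u_n)\ge -C$ only bounds a combination containing the a priori unbounded $\|w_n\|^2$. What actually tames $v_n$ in the paper is the derivative half of the Palais-Smale hypothesis: from $\big|\ld\J^{\prime}(u_n),u_n/\|u_n\|\rd\big|\le C$ and $|\J(u_n)|\le C$ one first extracts the sublinear bound $\|u_n\|_{FL}^{2Q^*_\la}\le C(1+\|u_n\|)$. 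Then, forming $\J(u_n)-\al\ld\J^{\prime}(u_n),u_n\rd$ with $\al<\frac{1}{2Q^*_\la}$ and splitting $u_n=v_n+w_n$, the only bad contribution is $(a-a_1)|v_n|_2^2$, and on the finite-dimensional $\mathbb{Y}_k$ the quantities $|v_n|_2$, $\|v_n\|$ and $\|v_n\|_{FL}$ are all equivalent, so $\|v_n\|^2\lesssim \|u_n\|_{FL}^2\lesssim (1+\|u_n\|)^{1/Q^*_\la}=o(\|u_n\|^2)$, which can be absorbed into the left-hand side. The point is that the norm equivalence must be applied to the $FL$-quantity, because that is the one norm of $u_n$ (hence of $v_n$) that the Palais-Smale condition bounds sublinearly; saying "all norms are equivalent on $\mathbb{Y}_k$" is only useful once you have identified which norm of $v_n$ you can actually estimate, and your sketch never does.
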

\begin{proof}
	Let $\{u_n\}$ be a Palais-Smale sequence of $\J$. That is, there exists a  constant $C>0$ such that  $|\J(u_n)|\leq  C $ and  $\bigg|\bigg \ld \J^\prime (u_n), \ds \frac{u_n}{\|u_n\|} \bigg\rd \bigg|\leq C$. It implies that 
	\begin{align}\label{hg36}
	\|u_n\|_{FL}^{2Q^*_\la}= \int_{\Om}\int_{\Om}\frac{|u_n(\xi)|^{Q^*_\la} |u_n(\eta)|^{Q^*_\la}}{|\eta^{-1}\xi|^{\la}}~d\eta d\xi\leq C(1+ \|u_n\|).
	\end{align}
	First we will  prove that the sequence  $\{ u_n\} $ is bounded. For the sake of  clarity, we consider two cases: \\
	\textbf{Case 1:} If $0<a<a_1$.\\
	With the help of variational characterization of $a_1$, we have 
	\begin{align*}
\left(\frac{1}{2}- \frac{1}{2.Q^*_\la}\right) c \|u_n\|^2  & \leq   \left(\frac{1}{2}- \frac{1}{2.Q^*_\la}\right) (
\|u_n\|^2- a |u_n|^2_2)\\
& = 	\J(u_n)- \frac{1}{2.Q^*_\la}\ld \J^\prime (u_n), \ds u_n \rd\\
&  \quad \leq  C(1+\|u_n\|), 
	\end{align*}
for some $c>0$. It implies $\{ u_n\}$ is a bounded sequence. \\
\textbf{Case 2:} If $a_k\leq a < a_{k+1}$ for some $k \in \mathbb{N}$.\\
We can choose  $\al < \frac{1}{2.Q^*_\la}< \frac{1}{2},\; u_n= v_n+w_n$ where $ v_n \in \mathbb{Y}_r$ and $w_n \in \mathbb{E}_{r+1} $ for some $ r \in \mathbb{N}$. Then 
\begin{align*}
C(1+\|u_n\|)& \geq \J(u_n)- \al\ld \J^\prime (u_n), \ds u_n \rd\\
& =  \left(\frac{1}{2}- \al\right)  (
\|u_n\|^2- a |u_n|^2_2) +  \left(\al- \frac{1}{2.Q^*_\la}\right) \|u_n\|_{FL}^{2.Q^*_\la}\\
& =  \left(\frac{1}{2}- \al\right)  (
\|v_n\|^2+\|w_n\|^2 - a |v_n|^2_2- a |w_n|^2_2) +  \left(\al- \frac{1}{2.Q^*_\la}\right) \|u_n\|_{FL}^{2.Q^*_\la}\\
& \geq \left(\frac{1}{2}- \al\right)  (
\|w_n\|^2- \frac{a}{a_{r+1}} \|w_n\|^2 + (a_1-a) |v_n|^2_2) +  \left(\al- \frac{1}{2.Q^*_\la}\right) \|u_n\|_{FL}^{2.Q^*_\la}. 
\end{align*}
Thus using  \eqref{hg36},
\begin{align*}
C_2\|u_n\|^2 \leq C_3(1+\|u_n\|)+ C_4 (|v_n|^2_2+\|v_n\|^2).
\end{align*}
	Since $Y_r$ is finite dimensional, $|v_n|_2, \; \|v_n\|_2$ and  $\|v_n \|_{FL}$ all are equivalent.  Therefore,
	\begin{align*}
	C_2\|u_n\|^2 \leq C_3(1+\|u_n\|)+ C_5 (1+\|u_n\|)^{\frac{1}{Q^*_\la}},
	\end{align*}
	for some appropriate positive constants $C_2,C_3,C_4$ and $C_5$. It implies that there exists a constant $K>0$ such that $\|u_n \|< K $ for all $n \in \mathbb{N}$. In addition, there exists $u_0\in \mathring{S}_1^2(\Om)$ such that $u_n \rp u_0$ weakly in $\mathring{S}_1^2(\Om)$ and $u_n \ra u_0$ strongly in $L^r(\Om)$ for all $r \in [1, Q^*)$. Next, we prove that $u_0$ is a weak solution of $(P_a)$. 
	Since  $u_n \rp u_0 $   weakly in   $\mathring{S}_1^2(\Om)$ implies $|u_n|^{Q^*_{\la}} \rp |u_0|^{Q^*_{\la}}$ weakly in $L^{\frac{2Q}{2Q-\la}}(\Om)$. By Proposition \ref{prophg1}, Riesz potential on Heisenberg group defines a linear continuous map from   $L^{\frac{2Q}{2Q-\la}}(\Om)$ to $L^{\frac{2Q}{\la}}(\Om)$, we deduce that
	\[
	\int_{\Om}\frac{|u_n(\eta)|^{Q^*_{\la}}}{|\eta^{-1}\xi|^{\la}}d\eta \rp \int_{\Om}\frac{|u_0(\eta)|^{Q^*_{\la}}}{|\eta^{-1}\xi|^{\la}}d\eta  \; \text{ weakly in }\; L^{\frac{2Q}{\la}}(\Om)\]
	%|x|^{-\la}*|u_n|^{Q^*_{\la}} \rp  |x|^{-\la}*|u_0|^{Q^*_{\la}}  \text{ in } L^{\frac{2Q}{\la}}(\Om)
	Also, $|u_n|^{Q^*_{\la}-2}u_n \rp |u_0|^{Q^*_{\la}-2}u_0 $ weakly in $L^{\frac{2Q}{Q-\la+2}}(\Om)$.
Resuming the information collected so far, we have gained that 
	\[\left(\int_{\Om}\frac{|u_n(\eta)|^{Q^*_{\la}}}{|\eta^{-1}\xi|^{\la}}d\eta\right) |u_n|^{2^{*}_{\la}-2} u_n \rp \left(  \int_{\Om}\frac{|u_0(\eta)|^{Q^*_{\la}}}{|\eta^{-1}\xi|^{\la}}d\eta\right) |u_0|^{2^{*}_{\la}-2} u_0 \; \text{ weakly in }\; L^{\frac{2Q}{Q+2}}(\Om).\]
	Then, for any $v \in \mathring{S}_1^2(\Om)$, 
	\begin{equation}\label{hg8}
	\begin{aligned}
	\int_{\Om}\int_{\Om}& \frac{|u_n(\xi)|^{Q^*_{\la}}|u_n(\eta)|^{Q^*_{\la}-2}u_n(\eta) v(\eta) }{|\eta^{-1}\xi|^{\la}}~d\xi d\eta
	\\
	& \ra \int_{\Om}\int_{\Om}\frac{|u_0(\xi)|^{Q^*_{\la}}|u_0(\eta)|^{Q^*_{\la}-2}u_0(\eta) v(\eta) }{|\eta^{-1}\xi|^{\la}}~d\xi d\eta \; \text{as}\; n\rightarrow \infty.
	\end{aligned}
	\end{equation}
Consider
	\begin{align*}
	\ld \J^{\prime}(u_n)&-\J^{\prime}(u_0), v \rd = \int_{\Om}\na u_n \cdot \na v d\xi  -\int_{\Om} u_n v d\xi - \int_{\Om}\na u_0 \cdot  \na v d\xi+ \int_{\Om} u_0 v d\xi\\
	&-\int_{\Om}\int_{\Om}\left[\frac{|u_n(\xi)|^{Q^*_{\la}}|u_n(\eta)|^{Q^*_{\la}-2}u_n v(\eta) }{|\eta^{-1}\xi|^{\la}} -
	   \frac{|u_0(\xi)|^{Q^*_{\la}}|u_0(\eta)|^{Q^*_{\la}-2}u_0 v(\eta) }{|\eta^{-1}\xi|^{\la}}\right] d\xi d\eta.
	\end{align*}
	Taking into account the fact that $u_n \rp u_0 $   weakly in   $\mathring{S}_1^2(\Om)$, $u_n \ra u_0$ strongly in $L^r(\Om)$ for all $r \in [1, Q^*)$ and \eqref{hg8}, we have 
		\begin{align*}
	\ld \J^{\prime}(u_n)-\J^{\prime}(u_0), v \rd \ra 0 \text{ as } n \ra \infty. 
	\end{align*}
	Hence, $	\ld \J^{\prime}(u_0), v \rd= 0 $ for all $v \in \mathring{S}_1^2(\Om)$. That is, $u_0$ is a weak solution of $(P_a)$. \QED	
\end{proof}

\begin{Lemma}\label{lemhg7}
Let $0<\la< Q,\; a>0$ and $\{ u_n\}$ be a $(PS)_c$ sequence of $\J$ with 
\begin{align*}
c< \frac{Q+2-\la}{2(2Q-\la)} S_{HG}^{\frac{2Q-\la}{Q+2-\la}}. 
\end{align*}
Then $\{ u_n\}$ has a convergent subsequence. 
\end{Lemma}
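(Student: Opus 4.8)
The plan is to run the standard Brezis--Nirenberg / Struwe concentration--compactness argument, with the Choquard nonlinearity handled through the nonlocal Brezis--Lieb splitting of Lemma \ref{lemhg16}. Since $\{u_n\}$ is a $(PS)_c$ sequence, Lemma \ref{lemhg15} tells us it is bounded in $\mathring{S}_1^2(\Om)$; hence, up to a subsequence, $u_n \rp u_0$ weakly in $\mathring{S}_1^2(\Om)$, $u_n \ra u_0$ strongly in $L^r(\Om)$ for every $r \in [1,Q^*)$ (in particular in $L^2(\Om)$, since $Q^*>2$), $u_n \ra u_0$ a.e.\ in $\Om$, and, again by Lemma \ref{lemhg15}, $u_0$ is a weak solution of $(P_a)$. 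Testing the weak formulation with $u_0$ itself gives $\ld \J'(u_0), u_0 \rd = 0$, i.e.\ $\|u_0\|^2 - a|u_0|_2^2 = \|u_0\|_{FL}^{2Q^*_\la}$.

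Set $v_n := u_n - u_0$, so that $v_n \rp 0$ weakly and, extended by zero outside $\Om$, $v_n \in \mathring{S}_1^2(\H)$. I would record three splitting identities as $n\ra\infty$: (a) $\|u_n\|^2 = \|u_0\|^2 + \|v_n\|^2 + o(1)$, the Hilbert-space expansion coming from weak convergence; (b) $|u_n|_2^2 = |u_0|_2^2 + o(1)$ from the strong $L^2$ convergence; and (c) the nonlocal Brezis--Lieb identity $\|u_n\|_{FL}^{2Q^*_\la} = \|u_0\|_{FL}^{2Q^*_\la} + \|v_n\|_{FL}^{2Q^*_\la} + o(1)$, which is exactly Lemma \ref{lemhg16}. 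Substituting (a)--(c) into $\J(u_n)\ra c$ and into $\ld\J'(u_n),u_n\rd \ra 0$, and using $\ld\J'(u_0),u_0\rd=0$, we obtain
\[
\tfrac{1}{2}\|v_n\|^2 - \tfrac{1}{2Q^*_\la}\|v_n\|_{FL}^{2Q^*_\la} = c - \J(u_0) + o(1),
\qquad
\|v_n\|^2 - \|v_n\|_{FL}^{2Q^*_\la} = o(1).
\]

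From the second relation, $\|v_n\|^2 \ra \ell$ and $\|v_n\|_{FL}^{2Q^*_\la} \ra \ell$ for a common $\ell \ge 0$. Applying the definition \eqref{hg42} of $S_{HG}$ to $v_n$ gives $\|v_n\|^2 \ge S_{HG}\big(\|v_n\|_{FL}^{2Q^*_\la}\big)^{1/Q^*_\la}$; letting $n\ra\infty$ yields $\ell \ge S_{HG}\,\ell^{1/Q^*_\la}$, so either $\ell = 0$ or $\ell \ge S_{HG}^{Q^*_\la/(Q^*_\la-1)} = S_{HG}^{(2Q-\la)/(Q+2-\la)}$, using the identity $Q^*_\la/(Q^*_\la-1) = (2Q-\la)/(Q+2-\la)$. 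Assume $\ell>0$. Since $\ld\J'(u_0),u_0\rd = 0$, one computes $\J(u_0) = \big(\tfrac12 - \tfrac{1}{2Q^*_\la}\big)\|u_0\|_{FL}^{2Q^*_\la} \ge 0$. Combining this with the first relation above and $\tfrac12 - \tfrac{1}{2Q^*_\la} = \tfrac{Q+2-\la}{2(2Q-\la)}$,
\[
c = \J(u_0) + \Big(\tfrac12 - \tfrac{1}{2Q^*_\la}\Big)\ell \ge \frac{Q+2-\la}{2(2Q-\la)}\,S_{HG}^{(2Q-\la)/(Q+2-\la)},
\]
contradicting the hypothesis on $c$. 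Hence $\ell = 0$, so $\|v_n\|\ra 0$, i.e.\ $u_n \ra u_0$ strongly in $\mathring{S}_1^2(\Om)$, as claimed.

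The only genuinely nontrivial step is the nonlocal Brezis--Lieb splitting (c), which is precisely why Lemma \ref{lemhg16} was isolated beforehand; the remaining work is bookkeeping with the exponents $Q^*_\la$ and $Q^*_\la/(Q^*_\la-1)=(2Q-\la)/(Q+2-\la)$, together with the elementary remark that a weak solution automatically has nonnegative energy because $\ld\J'(u_0),u_0\rd=0$ forces $\|u_0\|^2 - a|u_0|_2^2 = \|u_0\|_{FL}^{2Q^*_\la}\ge 0$ (this avoids any sign restriction on $a$ at this stage). Note that the degenerate case $\ell>0$ with $u_0=0$ is covered as well, since then $\J(u_0)=0$ and the displayed lower bound for $c$ still holds.
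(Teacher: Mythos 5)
Your proposal is correct and follows essentially the same route as the paper: boundedness and identification of the weak limit via Lemma \ref{lemhg15}, the splitting $z_n=u_n-u_0$ with the Hilbert-space expansion, strong $L^2$ convergence, and the nonlocal Brezis--Lieb identity of Lemma \ref{lemhg16}, then the dichotomy $\ell=0$ or $\ell\ge S_{HG}^{(2Q-\la)/(Q+2-\la)}$ from the definition \eqref{hg42}, with $\J(u_0)\ge 0$ ruling out the second case under the energy restriction. No substantive differences from the paper's argument.
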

\begin{proof}
	We will follow the standard variational technique to proof the Lemma.  Let  $u_0$ be the weak limit of the sequence $u_n$ obtained in Lemma \ref{lemhg15}. We set $z_n=u_n-u_0$, then $z_n\rp 0$ weakly in $\mathring{S}^2_1(\Om)$ and $z_n\ra 0$ a.e in $\Om$ as $n \ra \infty$.  By Brezis-Leib Lemma \cite{bre1983}  and Lemma \ref{lemhg16} we have 
	\begin{align}\label{hg37}
&	\int_{\Om} |\na u_n|^2~ d\xi= 	\int_{\Om} |\na z_n|^2~ d\xi+	\int_{\Om} |\na u_0|^2~ d\xi
\end{align}
 and 
 \begin{equation}
 \begin{aligned}\label{hg38}
  	\int_{\Om}\int_{\Om}& \frac{|u_n(\xi)|^{Q^*_{\la}}|u_n(\eta)|^{Q^*_{\la}} }{|\eta^{-1}\xi|^{\la}}~d\xi d\eta \\
  	& = \int_{\Om}\int_{\Om} \frac{|z_n(\xi)|^{Q^*_{\la}}|z_n(\eta)|^{Q^*_{\la}} }{|\eta^{-1}\xi|^{\la}}~d\xi d\eta+ \int_{\Om}\int_{\Om} \frac{|u_0(\xi)|^{Q^*_{\la}}|u_0(\eta)|^{Q^*_{\la}} }{|\eta^{-1}\xi|^{\la}}~d\xi d\eta+ o_n(1).
	\end{aligned}
	\end{equation}
	 With the help of the fact that $\ld \mathcal{J}_a^\prime(u_0),u_0 \rd = 0$, we deduce that 
	 \begin{align}\label{hg39}
	 \mathcal{J}_a(u_0)= \frac{Q+2-\la}{2(2Q-\la)} \int_{\Om}\int_{\Om} \frac{|u_0(\xi)|^{Q^*_{\la}}|u_0(\eta)|^{Q^*_{\la}} }{|\eta^{-1}\xi|^{\la}}~d\xi d\eta. 
	 \end{align}
	 Taking into  account \eqref{hg37}, \eqref{hg38} and \eqref{hg39}, we get
	\begin{equation}\label{hg40}
	\begin{aligned}
c \leftarrow \mathcal{J}_a(u_n)& = \mathcal{J}_a(u_0)+ \frac{1}{2} \int_{\Om} |\na z_n|^2~d\xi - \frac{1}{2Q^*_\la} \int_{\Om}\int_{\Om} \frac{|z_n(\xi)|^{Q^*_{\la}}|z_n(\eta)|^{Q^*_{\la}} }{|\eta^{-1}\xi|^{\la}}~d\xi d\eta+o_n(1)\\
& \geq \frac{1}{2} \int_{\Om} |\na z_n|^2~d\xi - \frac{1}{2Q^*_\la} \int_{\Om}\int_{\Om} \frac{|z_n(\xi)|^{Q^*_{\la}}|z_n(\eta)|^{Q^*_{\la}} }{|\eta^{-1}\xi|^{\la}}~d\xi d\eta+o_n(1)
\end{aligned}
\end{equation}
	and 
	\begin{equation}\label{hg41}
	\begin{aligned}
	o_n(1)= \ld \mathcal{J}_a^\prime(u_n),u_n\rd=   \int_{\Om} |\na z_n|^2~d\xi -  \int_{\Om}\int_{\Om} \frac{|z_n(\xi)|^{Q^*_{\la}}|z_n(\eta)|^{Q^*_{\la}} }{|\eta^{-1}\xi|^{\la}}~d\xi d\eta+o_n(1).
	\end{aligned}
	\end{equation}
	It implies that 
	\begin{align*}
	\int_{\Om} |\na z_n|^2~d\xi =   \int_{\Om}\int_{\Om} \frac{|z_n(\xi)|^{Q^*_{\la}}|z_n(\eta)|^{Q^*_{\la}} }{|\eta^{-1}\xi|^{\la}}~d\xi d\eta \ra m 
	\end{align*}
	where $m$ is a nonnegative constant. From \eqref{hg40}, \eqref{hg41} and \eqref{hg42}, we obtain 
	\begin{align*}
	c \geq \frac{Q+2-\la}{2(2Q-\la)}m  \text{ and } m=0 \text{ or } m \geq S_{HG}^{\frac{2Q-\la}{Q+2-\la}}.
	\end{align*}
	If $m=0$ we are done, else 
	\begin{align*}
	\frac{Q+2-\la}{2(2Q-\la)}S_{HG}^{\frac{2Q-\la}{Q+2-\la}} \leq c, 
	\end{align*}
	which contradicts  the assumption on the range of $c$. Thus, $m=0$, that is, $\|u_n-u_0\| \ra 0$ as $n\ra \infty$. \QED
\end{proof}
\section{ The coercive case (\bm{$0<a<a_1$})}
 In this section, we will give the prove of Theorem \ref{thmhg1} in case of $0<a<a_1$. In order to prove this we fist give some estimates on the critical term involving minimizers. Finally we proved the existence of a nontrivial solution of $(P_a)$ using the Mountain Pass Lemma.
From Lemma \ref{lemhg3}, we know that the  function 
\begin{align*}
W(\xi)= W(x,y,t)=  \frac{S^{\frac{(Q-\la)(2-Q)}{4(Q-\la+2)}}C(Q,\la)^{\frac{2-Q}{2(Q-\la+2)}}B_0}{(t^2+(1+|x|^2+|y|^2)^2)^{(Q-2)/4}}
\end{align*}
 is a minimizer of $S_{HG}$, where $B_0>0$ is defined in \eqref{hg4}  and $ S^{\frac{-(Q-\la)(2-Q)}{4(Q-\la+2)}}C(Q,\la)^{\frac{Q-2}{2(Q-\la+2)}}W$ is minimizer of $S$. 
Moreover,  for $\ba >0$, the function $W_{\ba}$ defined as 
  $$W_{\ba }= \frac{\beta^{\frac{Q-2}{2}} S^{\frac{(Q-\la)(2-Q)}{4(Q-\la+2)}}C(Q,\la)^{\frac{2-Q}{2(Q-\la+2)}}C }{\left(\ba^4t^2+ (1+\ba^2 |x|^2+\ba^2 |y|^2)^2 \right)^{\frac{Q-2}{4}}}$$ satisfies 
  \begin{align*}
& \int_{\H} |\na W_{\ba }|^2~ d \xi= \int_{\H}\int_{\H}\frac{|W_{\ba }(\xi)|^{Q^*_\la} |W_{\ba }(\eta)|^{Q^*_\la}}{|\eta^{-1}\xi|^{\la}}~d\eta d\xi = S_{HG}^{\frac{2Q-\la}{Q-\la+2}}, \\
& \int_{\H} | W_{\ba }|^{Q^*}~ d \xi= S^{\frac{Q}{Q-\la+2}}C(Q,\la)^{\frac{-Q}{Q-\la+2}}. 
\end{align*}
 Without loss of generality, we can assume that $0 \in \Om$ and there exists $R>0$ such that $B(0,4R) \subset \Om \subset B(0,\kappa R)$ for some $\kappa>0$. Let $\phi \in C_c^\infty(\Om)$  such that $0\leq \phi \leq 1 ,\; |\na \phi |$ is bounded and 
\begin{align*}
 \phi(\xi)= \left\{
\begin{array}{ll}
1, & \text{ if } \xi \in B(0,R)  \\
0, &  \text{ if } \xi \in \H \setminus B(0,2R) . \\
\end{array} 
\right. 
\end{align*}
Then we consider the function $
w_\ba = \phi W_{\ba } \in \mathring{S}_1^2(\Om)$. We have the following asymptotic estimates for $w_\ba$.
%We need the following lemma} 
%\begin{Lemma}\label{lemhg4}
%	Let $f : \mathbb{R}^+ \ra \mathbb{R}$ and let  $R>0$. Then  for any $B(0,R)\subset \H$
%	\begin{align*}
%	\int_{B(0,R)} f(|\xi|)~ d \xi = w_n\int_0^R f (\rho) \rho^{Q-1} d \rho, \; 
%	\end{align*}
%	where $w_n$ is the surface area of unit ball in $\H$
%\end{Lemma}
\begin{Lemma}\label{lemhg5}
	Let $0<\la <Q$ then the following holds:
	\begin{itemize}
		\item [(i)] $\ds \int_{ \Om}|w_\ba|^2~d\xi  \geq C \left\{
		\begin{array}{ll}
		\ba^{-2}+ O(\ba^{-Q+2}), &  \text{ if }  Q>4, \\
		\ba^{-2} \log\; \ba+O(\ba^{-2}), & \text{ if } Q=4.\\
		\end{array} 
		\right.$ 
		\item[(ii)] $\ds \int_{ \Om}  |w_\ba|^{Q^*}d \xi = S^{\frac{Q}{Q-\la+2}}C(Q,\la)^{\frac{-Q}{Q-\la+2}}+ O(\ba ^{-Q})$. 
		\item[(iii) ]$ \ds \int_{\Om}\int_{\Om}\frac{|w_{\ba}(\xi)|^{Q^*_\la} |w_{\ba }(\eta)|^{Q^*_\la}}{|\eta^{-1}\xi|^{\la}}~d\eta d\xi  \leq  S_{HG}^{\frac{2Q-\la}{Q-\la+2}}+ O(\ba^{-Q})$.
		\item[(iv)]  $ \ds \int_{\Om}\int_{\Om}\frac{|w_{\ba}(\xi)|^{Q^*_\la} |w_{\ba }(\eta)|^{Q^*_\la}}{|\eta^{-1}\xi|^{\la}}~d\eta d\xi  \geq  S_{HG}^{\frac{2Q-\la}{Q-\la+2}}- O(\ba^{-\frac{2Q-\la}{2}})$.
		\item[(v)] $\ds \int_{ \Om} |\na w_\ba |^2 d \xi \leq S_{HG}^{\frac{2Q-\la}{Q-\la+2}}+ O(\ba^{-\min\{ {\frac{2Q-\la}{2}}, Q-2\}} )$. 
	\end{itemize}
\end{Lemma}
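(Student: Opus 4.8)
\noi\textbf{Proof strategy.} The plan is to base everything on two facts. First, the algebraic identity $W_\ba(\xi)=c_0\,\ba^{\frac{Q-2}{2}}\,Z(\de_\ba\xi)$ for a fixed $c_0>0$, which is immediate from the definition of $W_\ba$ and from \eqref{hg4}; second, the behaviour of $Z$ at infinity, also read off \eqref{hg4}: $c_1|\xi|^{-(Q-2)}\le Z(\xi)\le c_2|\xi|^{-(Q-2)}$ and $|\na Z(\xi)|\le c_3|\xi|^{-(Q-1)}$ for $|\xi|\ge1$. Combined with the dilation rules $d(\de_\ba\xi)=\ba^Q d\xi$, $|\de_\ba\xi|=\ba|\xi|$ and $\na(u\circ\de_\ba)=\ba\,(\na u)\circ\de_\ba$, these give, on $\{|\xi|\ge R\}$ and for $\ba$ large (so that $\ba R\ge1$), the uniform pointwise bounds $W_\ba(\xi)\le C\ba^{-\frac{Q-2}{2}}|\xi|^{-(Q-2)}$ and $|\na W_\ba(\xi)|\le C\ba^{-\frac{Q-2}{2}}|\xi|^{-(Q-1)}$, which will absorb every tail. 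I will use repeatedly the numerology $(Q-2)Q^*=2Q$, $(Q-2)Q^*_\la=2Q-\la$, $\tfrac{Q-2}{2}Q^*=Q$, $\tfrac{Q-2}{2}Q^*_\la=\tfrac{2Q-\la}{2}$, $Q^*_\la\cdot\tfrac{2Q}{2Q-\la}=Q^*$, the normalization identities for $W_\ba$ stated just before the lemma, and the annulus $A:=B(0,2R)\setminus B(0,R)$ (with $|A|\simeq R^Q$), which carries $\na\phi$.

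For (ii) I would split $\int_\Om|w_\ba|^{Q^*}=\int_{B(0,R)}|W_\ba|^{Q^*}+\int_A|\phi W_\ba|^{Q^*}$; the substitution $\zeta=\de_\ba\xi$ turns the first term into $c_0^{Q^*}\int_{B(0,\ba R)}|Z|^{Q^*}$, which differs from $\int_\H|W_\ba|^{Q^*}$ only by the tail $c_0^{Q^*}\int_{\{|\zeta|>\ba R\}}|Z|^{Q^*}\le C\int_{\ba R}^{\infty}s^{-Q-1}\,ds=O(\ba^{-Q})$, while the annulus term is $\le\int_A|W_\ba|^{Q^*}\le C\ba^{-Q}|A|=O(\ba^{-Q})$; with $\int_\H|W_\ba|^{Q^*}=S^{\frac{Q}{Q-\la+2}}C(Q,\la)^{\frac{-Q}{Q-\la+2}}$ this is (ii). For (i), since $w_\ba=W_\ba$ on $B(0,R)\subset\Om$, one has $\int_\Om|w_\ba|^2\ge c_0^2\ba^{-2}\int_{B(0,\ba R)}|Z|^2$; when $Q>4$, $|Z|^2\in L^1(\H)$ and $\int_{B(0,\ba R)}|Z|^2\ge\int_\H|Z|^2-C\int_{\ba R}^{\infty}s^{3-Q}\,ds=\int_\H|Z|^2-O(\ba^{4-Q})$, while when $Q=4$ the lower bound $Z\ge c_1|\zeta|^{-2}$ gives $\int_{B(0,\ba R)}|Z|^2\ge c\int_{1\le|\zeta|\le\ba R}|\zeta|^{-4}\,d\zeta=c'\log(\ba R)$; multiplying by $c_0^2\ba^{-2}$ produces the two cases. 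Estimate (iii) is immediate: $0\le w_\ba\le W_\ba$ pointwise, $\mathrm{supp}\,w_\ba\subset\Om$, and the integrand is monotone in $|w_\ba|$, so the double integral is $\le\int_{\H}\int_{\H}\frac{|W_\ba(\xi)|^{Q^*_\la}|W_\ba(\eta)|^{Q^*_\la}}{|\eta^{-1}\xi|^\la}\,d\eta\,d\xi=S_{HG}^{\frac{2Q-\la}{Q-\la+2}}$.

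The delicate step, and the one I expect to be the main obstacle, is the lower bound (iv): the crude $\|w_\ba\|_{FL}\ge\|W_\ba\|_{FL}-\|(1-\phi)W_\ba\|_{FL}$ yields only $O(\ba^{-(Q-2)/2})$, which is too weak. Instead, set $g:=(1-\phi)W_\ba$, so $0\le w_\ba=W_\ba-g\le W_\ba$; since $Q^*_\la>1$ the tangent-line inequality for $t\mapsto t^{Q^*_\la}$ gives $|w_\ba|^{Q^*_\la}=W_\ba^{Q^*_\la}-h_\ba$ with $0\le h_\ba\le Q^*_\la W_\ba^{Q^*_\la-1}g=Q^*_\la(1-\phi)W_\ba^{Q^*_\la}$. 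Expanding $|w_\ba(\xi)|^{Q^*_\la}|w_\ba(\eta)|^{Q^*_\la}=(W_\ba(\xi)^{Q^*_\la}-h_\ba(\xi))(W_\ba(\eta)^{Q^*_\la}-h_\ba(\eta))$, discarding $h_\ba(\xi)h_\ba(\eta)\ge0$, and using symmetry in $\xi\leftrightarrow\eta$, one reaches
\[
\int_\Om\int_\Om\frac{|w_\ba(\xi)|^{Q^*_\la}|w_\ba(\eta)|^{Q^*_\la}}{|\eta^{-1}\xi|^\la}\,d\eta\,d\xi\ \ge\ S_{HG}^{\frac{2Q-\la}{Q-\la+2}}-2Q^*_\la\int_{\H}\int_{\H}\frac{W_\ba(\xi)^{Q^*_\la}(1-\phi(\eta))W_\ba(\eta)^{Q^*_\la}}{|\eta^{-1}\xi|^\la}\,d\eta\,d\xi.
\]
On $\{|\eta|\ge R\}$, using the pointwise bound together with $\tfrac{Q-2}{2}Q^*_\la=\tfrac{2Q-\la}{2}$, one has $(1-\phi(\eta))W_\ba(\eta)^{Q^*_\la}\le C\ba^{-\frac{2Q-\la}{2}}|\eta|^{-(2Q-\la)}\mathbf{1}_{\{|\eta|\ge R\}}$; applying Proposition \ref{prophg1} with $f=W_\ba^{Q^*_\la}$, $h=|\cdot|^{-(2Q-\la)}\mathbf{1}_{\{|\cdot|\ge R\}}$ and $r=s=\tfrac{2Q}{2Q-\la}$ (which satisfy $1/r+\la/Q+1/s=2$) bounds the correction by $C\ba^{-\frac{2Q-\la}{2}}\,|W_\ba^{Q^*_\la}|_{\frac{2Q}{2Q-\la}}\,|h|_{\frac{2Q}{2Q-\la}}$. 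The last factor is a finite constant, and crucially $Q^*_\la\cdot\tfrac{2Q}{2Q-\la}=Q^*$ makes the middle factor equal to $\big(\int_\H W_\ba^{Q^*}\big)^{\frac{2Q-\la}{2Q}}$, a constant independent of $\ba$. Hence the correction is $O(\ba^{-\frac{2Q-\la}{2}})$ and (iv) follows; this matching of exponents inside the Hardy--Littlewood--Sobolev inequality is the crux of the argument.

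Finally, for (v) I would expand $|\na w_\ba|^2=\phi^2|\na W_\ba|^2+2\phi W_\ba\,\na\phi\cdot\na W_\ba+W_\ba^2|\na\phi|^2$; since $\phi\le1$, $\int_\Om\phi^2|\na W_\ba|^2\le\int_\H|\na W_\ba|^2=S_{HG}^{\frac{2Q-\la}{Q-\la+2}}$, while the remaining two terms are supported on $A$, where $W_\ba\le C\ba^{-\frac{Q-2}{2}}R^{-(Q-2)}$, $|\na W_\ba|\le C\ba^{-\frac{Q-2}{2}}R^{-(Q-1)}$ and $|\na\phi|\le C$, so each is $\le C\ba^{-(Q-2)}|A|=O(\ba^{-(Q-2)})$. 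Thus $\int_\Om|\na w_\ba|^2\le S_{HG}^{\frac{2Q-\la}{Q-\la+2}}+O(\ba^{-(Q-2)})$, and since $\ba^{-(Q-2)}\le\ba^{-\min\{\frac{2Q-\la}{2},\,Q-2\}}$ for $\ba\ge1$, this gives (v).
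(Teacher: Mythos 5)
Your proposal is correct, and parts (i) and (ii) follow the paper's computation essentially verbatim. The interesting differences are in (iii)--(v). For (iii) the paper applies the Hardy--Littlewood--Sobolev inequality together with part (ii), obtaining the stated $O(\ba^{-Q})$ error, whereas you simply use $0\le w_\ba\le W_\ba$ pointwise and monotonicity of the double integral, which gives the bound by $S_{HG}^{\frac{2Q-\la}{Q-\la+2}}$ with no error term at all --- cleaner and slightly sharper. For (iv) the paper restricts the double integral to $B(0,R)\times B(0,R)$ (where $w_\ba=W_\ba$) and subtracts the two exterior cross terms, while you expand $w_\ba^{Q^*_\la}=W_\ba^{Q^*_\la}-h_\ba$ with $0\le h_\ba\le Q^*_\la(1-\phi)W_\ba^{Q^*_\la}$ via the tangent-line inequality and discard the positive $h_\ba(\xi)h_\ba(\eta)$ term; the bookkeeping differs but both routes reduce to the same key step, namely an HLS estimate (with $r=s=\tfrac{2Q}{2Q-\la}$) of a cross term in which one factor is supported in $\{|\eta|\ge R\}$, and both give $O(\ba^{-\frac{2Q-\la}{2}})$ --- your correct observation that the naive triangle inequality in $\|\cdot\|_{FL}$ only yields $O(\ba^{-\frac{Q-2}{2}})$ is exactly the point of the lemma. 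For (v) the paper rewrites the main term as $\int_{\H}\na W_\ba\cdot\na(W_\ba\phi^2)$ and invokes the Euler--Lagrange equation satisfied by $W_\ba$, producing the error $O(\ba^{-\min\{\frac{2Q-\la}{2},\,Q-2\}})$; you instead bound $\int\phi^2|\na W_\ba|^2\le\int_{\H}|\na W_\ba|^2$ directly and estimate the cross and $|\na\phi|^2$ terms pointwise on the annulus, getting the (at least as good) rate $O(\ba^{-(Q-2)})$ without using the equation --- this does require the decay $|\na W_\ba(\xi)|\le C\ba^{-\frac{Q-2}{2}}|\xi|^{-(Q-1)}$ on $\{|\xi|\ge R\}$, a standard computation from \eqref{hg4} that you assert rather than carry out, but it is routine. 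All claimed rates are consistent with the statement.
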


\begin{proof}
\textit{(i)}  Consider 
\begin{align*}
\int_{ \Om}|w_\ba|^2~d\xi & = C \int_{ B(0,2R)}\frac{\ba ^{Q-2}\phi(\xi)^2}{(\ba ^4t^2+ (1+|\ba x|^2+ |\ba y |^2)^2)^{\frac{Q-2}{2}}}~d\xi \\
& \geq C\int_{ B(0,R)}\frac{\ba ^{Q-2}}{(\ba ^4t^2+ (1+|\ba x|^2+ |\ba y |^2)^2)^{\frac{Q-2}{2}}}~d\xi \\
& =C  \int_{ B(0,\ba R)}\frac{\ba ^{-2}}{(t^2+ (1+|x|^2+ | y |^2)^2)^{\frac{Q-2}{2}}}~d\xi\\
&\geq  C \ba ^{-2} \left(  \int_{ B(0,1)}~d\xi + \int_{B(0,\ba R) \setminus B(0,1)}\frac{1}{|\xi|^{2(Q-2)}}~d\xi    \right)\\
%& \geq C \ba ^{-2} \left(  C_1+ \int_{B(0,\ba R) \setminus B(0,1)}\frac{1}{(t^2+ (1+|x|^2+ | y |^2)^2)^{\frac{Q-2}{2}}}~d\xi    \right)\\
%& \geq C \ba ^{-2} \left(  C_1+ \int_{B(0,\ba R) \setminus B(0,1)}\frac{1}{|\xi|^{2(Q-2)}}~d\xi    \right)\\
& \geq C \ba ^{-2} \left(  C_1+ \int_1^{\ba R } \rho^{-Q+3}~d\rho    \right).
\end{align*}
Now, if $Q>4$, then 
\begin{align*}
\int_{ \Om}|w_\ba|^2~d\xi &  \geq C \ba ^{-2} \left(  C_1+ \int_1^{\ba R } \rho^{-Q+3}~d\rho    \right)=  C \ba ^{-2}+  O( \ba ^{2-Q}), 
\end{align*}
and if $Q=4$ then 
\begin{align*}
\int_{ \Om}|w_\ba|^2~d\xi &  \geq C \ba ^{-2} \left(  C_1+ \int_1^{\ba R } \rho^{-1}~d\rho    \right)=  C \ba ^{-2}+ C  \ba ^{-2} |\log \ba|  . 
\end{align*}
\textit{(ii)} Clearly,
\begin{align*}
\int_{ \Om}  |w_\ba|^{Q^*}d \xi& = \int_{\H}  |W_\ba|^{Q^*}d \xi - \int_{\H \setminus B(0,R)}  |W_\ba|^{Q^*}d \xi+ \int_{B(0,2R) \setminus B(0,R)}  |\phi W_\ba|^{Q^*}d \xi\\
& = S^{\frac{Q}{Q-\la+2}}C(Q,\la)^{\frac{-Q}{Q-\la+2}}  - \int_{\H \setminus B(0,R)}  |W_\ba|^{Q^*}d \xi+ \int_{B(0,2R) \setminus B(0,R)}  |\phi W_\ba|^{Q^*}d \xi. 
\end{align*}
Using the definition of $W_\ba $, we deduce that 
\begin{align*}
\int_{\H \setminus B(0,R)}  |W_\ba|^{Q^*}d \xi & = C \int_{\H \setminus B(0,R)} \frac{\ba ^Q}{(\ba ^4t^2+ (1+|\ba x|^2+ |\ba y |^2)^2)^{\frac{Q}{2}}}    d \xi\\
& \leq C \int_{\H \setminus B(0,R)} \frac{\ba ^{-Q}}{(t^2+ (|x|^2+ | y |^2)^2)^{\frac{Q}{2}}}    d \xi\\
& = C \int_{\H \setminus B(0,R)} \frac{\ba ^{-Q}}{|\xi |^{2Q}}    d \xi = O(\ba^{-Q}). 
\end{align*}
Also,
\begin{align*}
\int_{B(0,2R) \setminus B(0,R)}  |\phi W_\ba|^{Q^*}d \xi \leq C\int_{B(0,2R) \setminus B(0,R)}  \ba ^{-Q}~ d \xi \leq O(\ba ^{-Q}).
\end{align*}
\textit{(iii)} Taking into 
account  Proposition \ref{prophg1}, Lemma \ref{lemhg3} and part (ii), we have 
\begin{align*}
\int_{\Om}\int_{\Om}\frac{|w_{\ba}(\xi)|^{Q^*_\la} |w_{\ba }(\eta)|^{Q^*_\la}}{|\eta^{-1}\xi|^{\la}}~d\eta d\xi & \leq C(Q,\la) \left( S^{\frac{Q}{Q-\la+2}}C(Q,\la)^{\frac{-Q}{Q-\la+2}}+ O(\ba ^{-Q})\right)^{\frac{2Q-\la}{Q}}\\& 
 \leq  S_{HG}^{\frac{2Q-\la}{Q-\la+2}}+ O(\ba^{-Q}). 
\end{align*}
\textit{(iv)} 
Since 
\begin{equation}\label{hg14}
\begin{aligned}
& \int_{\Om}\int_{\Om}\frac{|w_{\ba}(\xi)|^{Q^*_\la} |w_{\ba }(\eta)|^{Q^*_\la}}{|\eta^{-1}\xi|^{\la}}~d\eta d\xi \\
& \geq 
\int_{B(0,R)}\int_{B(0,R)}\frac{|W_{\ba}(\xi)|^{Q^*_\la} W_{\ba }(\eta)|^{Q^*_\la}}{|\eta^{-1}\xi|^{\la}}~d\eta d\xi \\
 & = \int_{\H}\int_{\H}\frac{|W_{\ba}(\xi)|^{Q^*_\la} W_{\ba }(\eta)|^{Q^*_\la}}{|\eta^{-1}\xi|^{\la}}~d\eta d\xi -2 \int_{\H \setminus B(0,R)}\int_{B(0,R)}\frac{|W_{\ba}(\xi)|^{Q^*_\la} W_{\ba }(\eta)|^{Q^*_\la}}{|\eta^{-1}\xi|^{\la}}~d\eta d\xi\\
 & \quad  - \int_{\H \setminus B(0,R)}\int_{\H \setminus B(0,R)}\frac{|W_{\ba}(\xi)|^{Q^*_\la} W_{\ba }(\eta)|^{Q^*_\la}}{|\eta^{-1}\xi|^{\la}}~d\eta d\xi \\
 & = S_{HG}^{\frac{2Q-\la}{Q-\la+2}}  - 2 \int_{\H \setminus B(0,R)}\int_{B(0,R)}\frac{|W_{\ba}(\xi)|^{Q^*_\la} W_{\ba }(\eta)|^{Q^*_\la}}{|\eta^{-1}\xi|^{\la}}~d\eta d\xi \\ & \quad -  \int_{\H \setminus B(0,R)}\int_{\H \setminus B(0,R)}\frac{|W_{\ba}(\xi)|^{Q^*_\la} W_{\ba }(\eta)|^{Q^*_\la}}{|\eta^{-1}\xi|^{\la}}~d\eta d\xi 
\end{aligned}
\end{equation}
%where 
%\begin{align*}
%I_1 & = \int_{\H \setminus B(0,R)}\int_{B(0,R)}\frac{|W_{\ba}(\xi)|^{Q^*_\la} W_{\ba }(\eta)|^{Q^*_\la}}{|\eta^{-1}\xi|^{\la}}~d\eta d\xi\;\\
%\text{ and  }I_2& = \int_{\H \setminus B(0,R)}\int_{\H \setminus B(0,R)}\frac{|W_{\ba}(\xi)|^{Q^*_\la} W_{\ba }(\eta)|^{Q^*_\la}}{|\eta^{-1}\xi|^{\la}}~d\eta d\xi.
 %\end{align*}
  With straightforward computations, we have 
 \begin{equation}\label{hg12}
 \begin{aligned}
  & \int_{\H \setminus B(0,R)}\int_{B(0,R)}\frac{|W_{\ba}(\xi)|^{Q^*_\la} W_{\ba }(\eta)|^{Q^*_\la}}{|\eta^{-1}\xi|^{\la}}~d\eta d\xi \\
  & \leq C(Q,\la) \left(\int_{\H \setminus B(0,R)}|W_{\ba}(\xi)|^{Q^*} d\xi\right)^{\frac{2Q-\la}{2Q}} \left(\int_{B(0,R)}|W_{\ba}(\eta)|^{Q^*}d\eta\right)^{\frac{2Q-\la}{2Q}}\\
  & \leq C \left(\int_{\H \setminus B(0,R)}\frac{\ba ^{Q}}{|\ba \xi|^{2Q}} ~ d\xi\right)^{\frac{2Q-\la}{2Q}}\left(\int_{B(0,\ba R)} \frac{d\eta}{|\eta|^{2Q}}\right)^{\frac{2Q-\la}{2Q}}\\
  & \leq C \ba^{\frac{-(2Q-\la)}{2}} \left( \int_R^{\infty} \frac{1}{\rho^{Q+1}}~ d\rho\right)
  \left(\int_{\H} \frac{d\eta}{|\eta|^{2Q}}\right)^{\frac{2Q-\la}{2Q}}\\  & \leq O(\ba^{\frac{-(2Q-\la)}{2}}), 
 \end{aligned}
 \end{equation}
\begin{equation}\label{hg13}
\begin{aligned}
\int_{\H \setminus B(0,R)}\int_{\H \setminus B(0,R)}\frac{|W_{\ba}(\xi)|^{Q^*_\la} W_{\ba }(\eta)|^{Q^*_\la}}{|\eta^{-1}\xi|^{\la}}~d\eta d\xi
&  \leq C(Q,\la) \left(\int_{\H \setminus B(0,R)}|W_{\ba}(\xi)|^{Q^*}d\xi\right)^{\frac{2Q-\la}{Q}}\\
& \leq C \left(  O(\ba ^{-Q})\right)^{\frac{2Q-\la}{Q}}= O(\ba ^{-(2Q-\la)})
\end{aligned}
\end{equation}
From \eqref{hg14}, \eqref{hg12} and \eqref{hg13}, we have the desired result. \\
\textit{(v)} Since 
\begin{align*}
\int_{ \Om} |\na w_\ba |^2 d \xi & \leq \int_{ \H} |\na w_\ba |^2 d \xi\\
&=  \int_{ \H} |\na W_\ba |^2 \phi^2 d \xi+ \int_{ \H} |\na \phi  |^2 W_\ba^2d \xi +2 \int_{ \H} \na \phi \cdot \na W_\ba  W_\ba  \phi d \xi\\
& = \int_{ \H} \na W_\ba \cdot \na (W_\ba \phi^2) ~d \xi+  \int_{ \H} |\na \phi  |^2 W_\ba^2d \xi\\
& =  \int_{\H}\int_{\H}\frac{|W_{\ba}(\xi)|^{Q^*_\la} W_{\ba }(\eta)|^{Q^*_\la} \phi^2(\eta)}{|\eta^{-1}\xi|^{\la}}~d\eta d\xi +  \int_{ \H} |\na \phi  |^2 W_\ba^2d \xi
\end{align*}
\begin{align*}
& =\int_{\H}\int_{\H}\frac{|W_{\ba}(\xi)|^{Q^*_\la} W_{\ba }(\eta)|^{Q^*_\la} }{|\eta^{-1}\xi|^{\la}}~d\eta d\xi \\
&  \quad - \int_{\H}\int_{\H}\frac{|W_{\ba}(\xi)|^{Q^*_\la} W_{\ba }(\eta)|^{Q^*_\la} (1- \phi^2(\eta))}{|\eta^{-1}\xi|^{\la}}~d\eta d\xi +  \int_{ \H} |\na \phi  |^2 W_\ba^2d \xi\\
& \leq S_{HG}^{\frac{2Q-\la}{Q-\la+2}} + \int_{\H}\int_{\H}\frac{|W_{\ba}(\xi)|^{Q^*_\la} W_{\ba }(\eta)|^{Q^*_\la} (1- \phi^2(\eta))}{|\eta^{-1}\xi|^{\la}}~d\eta d\xi \\
& \qquad +  \int_{ \H} |\na \phi  |^2 W_\ba^2d \xi. 
\end{align*} 
A direct computation gives us   
\begin{equation}\label{hg15}
\begin{aligned}
\int_{\H}\int_{\H}\frac{|W_{\ba}(\xi)|^{Q^*_\la} W_{\ba }(\eta)|^{Q^*_\la} (1- \phi^2(\eta))}{|\eta^{-1}\xi|^{\la}}~d\eta d\xi & 
 \leq C \int_{\H}\int_{\H\setminus B(0, R)}\frac{|W_{\ba}(\xi)|^{Q^*_\la} W_{\ba }(\eta)|^{Q^*_\la} }{|\eta^{-1}\xi|^{\la}}~d\eta d\xi\\
&  \leq C \left(\int_{\H \setminus B(0,R)}|W_{\ba}(\xi)|^{Q^*}d\xi\right)^{\frac{2Q-\la}{2Q}}\\
&  \leq  O(\ba ^{\frac{-(2Q-\la)}{2}}),
\end{aligned}
\end{equation}
and 
\begin{equation}\label{hg16}
\begin{aligned}
\int_{ \H} |\na \phi  |^2 W_\ba^2d \xi \le  C \int_{ B(0,2R) \setminus B(0,R)}  W_\ba^2d \xi \leq O(\ba^{-(Q-2)}).
\end{aligned}
\end{equation}
From \eqref{hg15} and \eqref{hg16}, we have the required result. \QED
\end{proof}
\begin{Lemma}\label{lemhg8}
	Let $ 0<\la <Q$ and $a>0$ then there exists $z \in \mathring{S}_1^2(\Om)\setminus \{0\}$ such that 
	\begin{align*}
	\frac{\|z\|^2 -a |z|^2_2}{\|z\|_{FL}^2}< S_{HG}
	\end{align*}
	\end{Lemma}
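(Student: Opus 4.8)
The plan is to test the quotient against the truncated extremal $z=w_{\ba}=\phi W_{\ba}\in\mathring{S}_1^2(\Om)\setminus\{0\}$ constructed just before Lemma~\ref{lemhg5}, and to show the strict inequality holds once $\ba$ is taken large. The engine is Lemma~\ref{lemhg5}; the point to keep an eye on is that the loss produced by the zero-order term $-a|w_{\ba}|_2^2$ is of order $\ba^{-2}$ (of order $\ba^{-2}\log\ba$ when $Q=4$), which must be shown to dominate every error term coming from the truncation.

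First I would record the elementary identities
\[
\frac{2Q-\la}{Q-\la+2}-\frac{Q-2}{Q-\la+2}=1,\qquad \frac{1}{Q^*_\la}\cdot\frac{2Q-\la}{Q-\la+2}=\frac{Q-2}{Q-\la+2},
\]
so that $S_{HG}=S_{HG}^{\frac{2Q-\la}{Q-\la+2}}\big/S_{HG}^{\frac{Q-2}{Q-\la+2}}$, and then process the denominator: combining Lemma~\ref{lemhg5}(iv) with the elementary bound $(A-\e)^{1/Q^*_\la}\ge A^{1/Q^*_\la}-C\e$, valid for $0\le\e\le A/2$ since $1/Q^*_\la<1$, gives
\[
\|w_{\ba}\|_{FL}^2=\left(\int_{\Om}\int_{\Om}\frac{|w_{\ba}(\xi)|^{Q^*_\la}|w_{\ba}(\eta)|^{Q^*_\la}}{|\eta^{-1}\xi|^{\la}}\,d\eta\,d\xi\right)^{\frac{1}{Q^*_\la}}\ \ge\ S_{HG}^{\frac{Q-2}{Q-\la+2}}-O\!\left(\ba^{-\frac{2Q-\la}{2}}\right).
\]
For the numerator, Lemma~\ref{lemhg5}(v) gives $\|w_{\ba}\|^2\le S_{HG}^{\frac{2Q-\la}{Q-\la+2}}+O(\ba^{-\min\{(2Q-\la)/2,\,Q-2\}})$, while Lemma~\ref{lemhg5}(i) gives $a|w_{\ba}|_2^2\ge Ca\,\ba^{-2}$ if $Q>4$ and $a|w_{\ba}|_2^2\ge Ca\,\ba^{-2}\log\ba$ if $Q=4$, with $C>0$.

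The last step is exponent bookkeeping. Since $Q=2N+2\ge4$ and $0<\la<Q$, one always has $\tfrac{2Q-\la}{2}>2$ (because $\la<Q\le2Q-4$), and moreover $\min\{(2Q-\la)/2,\,Q-2\}>2$ when $Q>4$. Consequently, when $Q>4$ both the numerator and denominator corrections are $o(\ba^{-2})$ and are overwhelmed by the strictly negative contribution $-Ca\,\ba^{-2}$; when $Q=4$ the numerator correction $O(\ba^{-2})$ is overwhelmed by the $\ba^{-2}\log\ba$ gain while the denominator correction is still $o(\ba^{-2}\log\ba)$. In either case I would multiply out the target inequality $\|w_{\ba}\|^2-a|w_{\ba}|_2^2<S_{HG}\,\|w_{\ba}\|_{FL}^2$ using the factorization of $S_{HG}$ above and cancel the common leading term $S_{HG}^{\frac{2Q-\la}{Q-\la+2}}S_{HG}^{\frac{Q-2}{Q-\la+2}}$; what survives is an inequality of the shape $c\,a\,\ba^{-2}>o(\ba^{-2})$ (with $\ba^{-2}\log\ba$ in place of $\ba^{-2}$ when $Q=4$) for some $c>0$, which is true for all $\ba$ large enough. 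Fixing one such $\ba$ and setting $z=w_{\ba}$ finishes the proof.

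There is no deep difficulty, since Lemma~\ref{lemhg5} carries all the analytic weight; the only delicate spot is precisely this exponent comparison, where one must use that the Choquard error terms — born from a double integral and hence carrying the exponent $(2Q-\la)/2$ rather than a purely local exponent — decay strictly faster than $\ba^{-2}$. This is exactly where the hypotheses $\la<Q$ and $Q\ge4$ (automatic on $\H$) enter, and it is the reason the notorious low-dimensional obstruction of the classical Brezis--Nirenberg theorem does not arise here.
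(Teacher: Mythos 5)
Your proposal is correct and follows essentially the same route as the paper: both take $z=w_{\ba}=\phi W_{\ba}$ for $\ba$ large, feed the asymptotics of Lemma \ref{lemhg5} (parts (i), (iv), (v)) into the quotient, and conclude by observing that $\min\{\tfrac{2Q-\la}{2},Q-2\}>2$ for $Q>4$ (resp.\ that the $\ba^{-2}\log\ba$ gain beats the $O(\ba^{-2})$ errors for $Q=4$), so the $-a|w_{\ba}|_2^2$ contribution dominates all truncation errors. The only cosmetic difference is that you clear the denominator while the paper divides through and expands $(1-O(\ba^{-(2Q-\la)/2}))^{-(Q-2)/(2Q-\la)}$; the exponent bookkeeping is identical.
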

\begin{proof}
For the sake of clarity, We divide the proof in two cases:\\
	\textbf{Case 1:} $Q>4$.\\
	Using Lemma \ref{lemhg5} for $\ba $ large enough,  we deduce that  
	\begin{align*}
	\frac{\|w_\ba \|^2 -a |w_\ba|^2_2}{\|w_\ba\|_{FL}^2} &  \leq  \frac{S_{HG}^{\frac{2Q-\la}{Q-\la+2}}+ O(\ba^{-\min\{ {\frac{2Q-\la}{2}}, Q-2\}} ) -a\ba^{-2} - O(\ba^{-Q+2})}{\left(S_{HG}^{\frac{2Q-\la}{Q-\la+2}}- O(\ba^{-\frac{2Q-\la}{2}}) \right)^{\frac{Q-2}{2Q-\la}}}\\
	& \leq S_{HG} \left( \frac{1 + O(\ba^{-\min\{ {\frac{2Q-\la}{2}}, Q-2\}} ) -a\ba^{-2} }{\left(1- O(\ba^{-\frac{2Q-\la}{2}}) \right)^{\frac{Q-2}{2Q-\la}}} \right) \\
	& \leq   S_{HG} \left( 1 + C_1 \ba^{-\min\{ {\frac{2Q-\la}{2}}, Q-2\}}  -C_2 \ba^{-2}  \right) < S_{HG}. 
	\end{align*}
	\textbf{Case 2:} $Q=4$.\\
	Once again employing  Lemma \ref{lemhg5}  as before, for $\ba $ large, we deduce
	\begin{align*}
	\frac{\|w_\ba \|^2 -a |w_\ba|^2_2}{\|w_\ba\|_{FL}^2} &  \leq  \frac{S_{HG}^{\frac{8-\la}{4-\la+2}}+ O(\ba^{-\min\{ {\frac{8-\la}{2}}, 2\}} ) -a\ba^{-2} \log\; \ba }{\left(S_{HG}^{\frac{8-\la}{4-\la+2}}- O(\ba^{-\frac{8-\la}{2}}) \right)^{\frac{2}{8-\la}}}\\
	& \leq S_{HG} \left( \frac{1 + C_1 \ba ^{-2}  -C_2\ba^{-2} \log\; \ba }{\left(1- O(\ba^{-\frac{8-\la}{2}}) \right)^{\frac{4-2}{8-\la}}} \right) \\
	& \leq   S_{HG} \left( 1 + C_1 \ba ^{-2}  -C_2\ba^{-2} \log\; \ba \right) < S_{HG}. 
	\end{align*}
	Therefore, we can choose $z:= w_\ba$ with $\ba $ large enough and the conclusion follows. 	\QED
\end{proof}
\begin{Lemma}\label{lemhg6}
Let  	$ 0< \la < Q$ and $ a\in (0, a_1)$. Then $\J$ satisfies the following conditions:
	\begin{enumerate}
		\item [(i)] There exists $\al,\rho>0$ such that $\J(u)\geq \al$ for $\|u\|= \rho$.	
		\item [(ii)] There exists  $e \in \mathring{S}_1^2(\Om)$ with $\|e\|>\rho$ such that $\J(e)<0$.
	\end{enumerate}
\end{Lemma}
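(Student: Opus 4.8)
The plan is to verify the two mountain-pass conditions directly, exploiting the fact that for $a<a_1$ the quadratic form $u\mapsto\|u\|^2-a|u|_2^2$ is equivalent to $\|u\|^2$ on $\mathring{S}_1^2(\Om)$, and that the Choquard term is superquadratic.

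For part (i), I would first use the variational characterization of $a_1$ to get $|u|_2^2\le a_1^{-1}\|u\|^2$, hence
\begin{equation*}
\|u\|^2-a|u|_2^2\ \ge\ \Big(1-\tfrac{a}{a_1}\Big)\|u\|^2=:c_0\|u\|^2,\qquad c_0>0.
\end{equation*}
Next, Proposition \ref{prophg1} together with the Folland--Stein inequality \eqref{hg2} gives
\begin{equation*}
\|u\|_{FL}^{2Q^*_\la}=\int_{\Om}\int_{\Om}\frac{|u(\xi)|^{Q^*_\la}|u(\eta)|^{Q^*_\la}}{|\eta^{-1}\xi|^{\la}}\,d\eta\,d\xi\ \le\ C(Q,\la)\,|u|_{Q^*}^{2Q^*_\la}\ \le\ C\,\|u\|^{2Q^*_\la}.
\end{equation*}
Combining these, $\J(u)\ge \tfrac{c_0}{2}\|u\|^2-\tfrac{C}{2Q^*_\la}\|u\|^{2Q^*_\la}$. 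Since $0<\la<Q$ and $Q\ge 4$ one has $Q^*_\la=\tfrac{2Q-\la}{Q-2}>1$, so $2Q^*_\la>2$ and the scalar function $t\mapsto\tfrac{c_0}{2}t^2-\tfrac{C}{2Q^*_\la}t^{2Q^*_\la}$ is strictly positive on some interval $(0,\rho]$; fixing such a $\rho$ and setting $\al:=\tfrac{c_0}{2}\rho^2-\tfrac{C}{2Q^*_\la}\rho^{2Q^*_\la}>0$ yields $\J(u)\ge\al$ whenever $\|u\|=\rho$.

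For part (ii), I would fix any $u_0\in\mathring{S}_1^2(\Om)\setminus\{0\}$ — for definiteness one of the test functions $w_\ba$ constructed above, so that $\|u_0\|_{FL}>0$ — and evaluate $\J$ along the ray $tu_0$:
\begin{equation*}
\J(tu_0)=\frac{t^2}{2}\big(\|u_0\|^2-a|u_0|_2^2\big)-\frac{t^{2Q^*_\la}}{2Q^*_\la}\|u_0\|_{FL}^{2Q^*_\la}.
\end{equation*}
Because $2Q^*_\la>2$, the right-hand side tends to $-\infty$ as $t\to\infty$; choosing $t$ so large that $t\|u_0\|>\rho$ and $\J(tu_0)<0$, and putting $e:=tu_0$, completes the argument.

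The whole argument is routine; the only point needing a moment's attention is the strict inequality $2Q^*_\la>2$ (equivalently $Q^*_\la>1$), which holds exactly because $\la<Q$ and $Q>2$ — this is what makes the Choquard term negligible near the origin yet dominant at infinity. The hypothesis $a<a_1$ enters only in part (i), to guarantee that the coercivity constant $c_0$ is positive; no estimate on the critical level is required here.
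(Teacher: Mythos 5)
Your proposal is correct and follows essentially the same route as the paper: part (i) combines the variational characterization of $a_1$ with the Hardy--Littlewood--Sobolev/Sobolev bound on the Choquard term (the paper phrases this via the constant $S_{HG}$, you via Proposition \ref{prophg1} and \eqref{hg2}, which is the same estimate), and part (ii) evaluates $\J$ along a ray and uses $2Q^*_\la>2$. Your explicit remark that one should take $u_0$ with $\|u_0\|_{FL}>0$ (automatic for any nonzero $u_0$ since the kernel is positive) is a small point of care the paper leaves implicit, but the argument is the same.
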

\begin{proof}
\textit{(i)} Using the variational characterization of $a_1$ and definition of $S_{HG}$, we have 
	\begin{align*}
	\J(u)\geq \frac{1}{2}\left(1-\frac{a}{a_1}\right)\|u\|^2-\frac{S_{H,L}^{-Q^*_{\la}}}{2.Q^*_{\la}} \|u\|^{2.Q^*_{\la}}
	\end{align*}
	Using the fact that  $2<2.Q^*_{\la}$  and $a\in (0, a_1)$, we can choose $\al,\rho>0$ such that $\J(u)\geq \al$ whenever $\|u\|= \rho$. \\
	\textit{(ii)} Let $u \in \mathring{S}_1^2(\Om)$ then 
	\begin{align*}
	\J(tu)& = \frac{ t^2}{2}\left( \|u\|^2  - a \int_{\Om}|u|^2 ~d\xi \right) - \frac{t^{2.Q^*_{\la}}}{2.Q^*_{\la}} \int_{\Om}\int_{\Om} \frac{|u(x)|^{2^{*}_{\mu}}|u(y)|^{2^{*}_{\mu}}}{|\eta^{-1}\xi|^{\la}}~d\xi d\eta  \ra -\infty \text{ as } t \ra \infty.
	\end{align*}
	Hence we can choose $t_0>0$ (large enough) such that $e:= t_0u$ such that (ii) follows. \QED
\end{proof}
\begin{Proposition}\label{prophg2}
	Using Lemma \ref{lemhg6} and the mountain pass theorem without $(PS)$ condition (See \cite{willem}) there exits a $(PS)$ sequence at the minimax level 
	\begin{align*}
	c^*= \inf_{\ga \in \Gamma }\max_{ t \in [0,1]} \J(\ga (t))>0 
	\end{align*}
	where 
%	\begin{align*}
$	\Gamma:= \{ \ga \in C([0,1]), \mathring{S}_1^2(\Om)\; : \;   \ga (0)= 0 , \J(\ga(1))<0 \}.$
%	\end{align*}
	\QED
\end{Proposition}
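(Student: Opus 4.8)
The plan is to apply the mountain pass theorem in the version that does not presuppose the Palais--Smale condition --- either the deformation-lemma formulation in Willem's monograph \cite{willem}, or, equivalently, Ekeland's variational principle applied to the min--max family $\Gamma$. First I would verify that its hypotheses are in force: $\J\in C^1(\mathring{S}_1^2(\Om),\mb R)$, which was recorded right after the definition of $\J$ in Section~2; $\J(0)=0$; and the two geometric conditions furnished by Lemma~\ref{lemhg6}, namely $\J(u)\ge\al$ on the sphere $\{\|u\|=\rho\}$ for suitable $\al,\rho>0$, and the existence of $e\in\mathring{S}_1^2(\Om)$ with $\|e\|>\rho$ and $\J(e)<0$. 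The class $\Gamma$ is nonempty because the segment $\ga_0(t)=te$ satisfies $\ga_0(0)=0$ and $\J(\ga_0(1))=\J(e)<0$.

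Granting this, the quoted version of the theorem yields a sequence $\{u_n\}\subset\mathring{S}_1^2(\Om)$ with $\J(u_n)\to c^*$ and $\J'(u_n)\to 0$ in $\big(\mathring{S}_1^2(\Om)\big)^*$, that is, a $(PS)_{c^*}$ sequence, where $c^*=\inf_{\ga\in\Gamma}\max_{t\in[0,1]}\J(\ga(t))$. I emphasize that no compactness is asserted at this stage; the convergence of such a sequence is obtained later by combining the energy estimate of Lemma~\ref{lemhg8}, which places $c^*$ strictly below the threshold of Lemma~\ref{lemhg7}, with Lemma~\ref{lemhg7} itself.

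It then remains to show $c^*>0$, in fact $c^*\ge\al$. For this I would run the standard crossing argument. Choose $\rho$ small enough that $\J>0$ on the punctured ball $\{0<\|u\|\le\rho\}$ --- this is possible by the quadratic-versus-supercritical estimate used in the proof of Lemma~\ref{lemhg6}(i) --- and fix any $\ga\in\Gamma$. Since $\J(\ga(1))<0$ and $\ga(1)\ne 0$, necessarily $\|\ga(1)\|>\rho$, while $\|\ga(0)\|=0<\rho$; hence by continuity and the intermediate value theorem the map $t\mapsto\|\ga(t)\|$ attains the value $\rho$ at some $t_\ga\in(0,1)$, so that $\max_{t\in[0,1]}\J(\ga(t))\ge\J(\ga(t_\ga))\ge\al$. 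Taking the infimum over $\ga\in\Gamma$ gives $c^*\ge\al>0$. The argument is essentially routine once Lemma~\ref{lemhg6} is available; the only point to watch is that admissible paths must genuinely exit the sphere $\{\|u\|=\rho\}$, which is precisely why Lemma~\ref{lemhg6}(ii) is phrased with $\|e\|>\rho$ rather than merely $\J(e)<0$.
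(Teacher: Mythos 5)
Your proposal is correct and follows essentially the same route as the paper, which simply invokes the mountain pass theorem without the $(PS)$ condition from \cite{willem} after establishing the geometry in Lemma \ref{lemhg6} and offers no further written argument. Your verification of the hypotheses, the nonemptiness of $\Gamma$, and the crossing argument giving $c^*\ge\al>0$ are exactly the routine details the paper leaves implicit.
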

\textbf{Proof of main theorem \ref{thmhg1}(i):}
 Let $a \in (0, a_1)$ then from Lemma \ref{lemhg8}, there exists  $z \in \mathring{S}_1^2(\Om)\setminus \{0\}$  such that 
\begin{align*}
\frac{\|z\|^2 -a |z|^2_2}{\|z\|_{FL}^2}< S_{HG}
\end{align*}
Hence,
\begin{align*}
0< \max_{ t \in [0,1]} \J(tz)&  = \max_{ t \in [0,1]} \bigg\{  \frac{ t^2}{2}\left( \|u\|^2  - a \int_{\Om}|u|^2 ~d\xi \right) - \frac{t^{2.Q^*_{\la}}}{2.Q^*_{\la}} \int_{\Om}\int_{\Om} \frac{|u(x)|^{2^{*}_{\mu}}|u(y)|^{2^{*}_{\mu}}}{|\eta^{-1}\xi|^{\la}}~d\xi d\eta \bigg\}\\
& = \frac{Q+2-\la}{2(2Q-\la)} \left(\frac{\|z\|^2 -a |z|^2_2}{\|z\|_{FL}^2} \right)^{\frac{2Q-\la}{Q+2-\la}}\\
& < \frac{Q+2-\la}{2(2Q-\la)}S_{HG}. 
\end{align*}
By definition of $c^*$, we have 
\begin{align*}
c^*< \frac{Q+2-\la}{2(2Q-\la)}S_{HG}. 
\end{align*}
Therefore using Proposition \ref{prophg2}, there exists a   $(PS)$ sequence $\{u_n\}$, which on using Lemma \ref{lemhg7} gives a convergent subsequence of $\{u_n\}$ and thus $\J$ has a weak solution of $(P_a)$. \QED

\section{The noncoercive case (\bm{$a> a_1,\; a\neq a_k$} for all \bm{$k \in \mathbb{N}$})}
In this section we will complete the proof of Theorem \ref{thmhg1} by considering the case $a \geq a_1$.  We will make use of Linking Theorem given Rabinowitz \cite{rainbo}. 
\begin{Lemma}\label{lemhg9}
	Let $a \in [a_i , a_{i+1})$ for some $ i \in \mathbb{N}$ then the functional $\J$ satisfies  following: 
	\begin{itemize}
		\item [(i)]  There exists $\al,\rho>0$ such that   if $u \in \mathbb{E}_{i+1}$ with  $\|u\|= \rho$ then  $\J(u)\geq \al$. 
		\item [(ii)] $\J(u) <0$ for all $u \in \mathbb{Y}_i$.
		\item [(iii)] Let $D$ be a finite dimensional space in $\mathring{S}_1^2(\Om)$ then there exists  $ L> \rho $ such that for any $u \in D$ with $\|u \|>L$ we have  $\J(u)<0$.
	\end{itemize}
\end{Lemma}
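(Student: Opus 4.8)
The plan is to verify the three items one at a time, in each case reducing $\J$ to its model shape
\[
\J(u)=\tfrac12\big(\|u\|^2-a|u|_2^2\big)-\tfrac{1}{2Q^*_\la}\|u\|_{FL}^{2Q^*_\la},
\]
and then playing the min–max characterizations of the eigenvalues $a_i,a_{i+1}$ against the Sobolev-type bound $\|u\|_{FL}^2\le S_{HG}^{-1}\|u\|^2$ — which is nothing but \eqref{hg42} read for $u\in\mathring S_1^2(\Om)$, viewed as an element of $\mathring S_1^2(\H)$ by zero extension — together with the elementary fact that $2<2Q^*_\la$. These three items are precisely the geometric hypotheses needed to apply the linking theorem of Rabinowitz in the next step.

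For (i): if $u\in\mathbb E_{i+1}$, then $u$ is orthogonal to $e_1,\dots,e_i$, so the variational characterization of the eigenvalues gives $|u|_2^2\le a_{i+1}^{-1}\|u\|^2$. Hence, using $\|u\|_{FL}^{2Q^*_\la}\le S_{HG}^{-Q^*_\la}\|u\|^{2Q^*_\la}$,
\[
\J(u)\ \ge\ \tfrac12\Big(1-\tfrac{a}{a_{i+1}}\Big)\|u\|^2-\tfrac{S_{HG}^{-Q^*_\la}}{2Q^*_\la}\|u\|^{2Q^*_\la}.
\]
Since $a<a_{i+1}$ the first coefficient is positive, and since $2Q^*_\la>2$ the scalar function $t\mapsto \tfrac12(1-a/a_{i+1})t^2-\tfrac{S_{HG}^{-Q^*_\la}}{2Q^*_\la}t^{2Q^*_\la}$ is strictly positive for all small $t>0$; choosing $\rho>0$ at which it takes a positive value and letting $\al$ be that value proves (i).

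For (ii): if $u\in\mathbb Y_i=\mathrm{span}\{e_1,\dots,e_i\}$, then $\|u\|^2\le a_i|u|_2^2$, so $\|u\|^2-a|u|_2^2\le(a_i-a)|u|_2^2\le0$ because $a\ge a_i$; moreover $e_n\in L^\infty(\Om)$ forces $\mathbb Y_i\subset L^{Q^*}(\Om)$, so $\|u\|_{FL}<\infty$, and thus $\J(u)\le -\tfrac{1}{2Q^*_\la}\|u\|_{FL}^{2Q^*_\la}\le0$, with strict inequality whenever $u\ne0$ (which is all that the linking geometry needs on the ``bottom face'' $\mathbb Y_i$). For (iii): on the finite-dimensional space $D$ all norms are equivalent, and since $\|\cdot\|_{FL}$ is a genuine norm on $X_{FL}\supset D$ (proved earlier) there is $c_D>0$ with $\|u\|_{FL}\ge c_D\|u\|$ on $D$; using also $a>0$,
\[
\J(u)\ \le\ \tfrac12\|u\|^2-\tfrac{c_D^{2Q^*_\la}}{2Q^*_\la}\|u\|^{2Q^*_\la},
\]
and since $2Q^*_\la>2$ the right-hand side tends to $-\infty$ as $\|u\|\to\infty$, so some $L>\rho$ works.

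The whole argument is routine; the only points requiring care are invoking $a_{i+1}$ on $\mathbb E_{i+1}$ and $a_i$ on $\mathbb Y_i$ (and not the reverse), checking finiteness of $\|u\|_{FL}$ on $\mathbb Y_i$ via $e_n\in L^\infty(\Om)$, and, for (iii), using finite-dimensionality to bound $\|\cdot\|_{FL}$ below by $\|\cdot\|$. I do not anticipate any genuine obstacle.
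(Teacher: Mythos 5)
Your proof is correct and follows essentially the same route as the paper: the eigenvalue bound $|u|_2^2\le a_{i+1}^{-1}\|u\|^2$ on $\mathbb{E}_{i+1}$ together with the Hardy--Littlewood--Sobolev/$S_{HG}$ bound for (i), the Rayleigh-quotient bound $\|u\|^2\le a_i|u|_2^2$ on $\mathbb{Y}_i$ for (ii) (the paper writes this out in the coordinates $u=\sum b_ke_k$, which is the same computation), and equivalence of norms on the finite-dimensional space $D$ for (iii). Your remark that the strict inequality in (ii) requires $u\ne0$ is in fact slightly more careful than the paper's statement.
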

\begin{proof}
	 \textit{(i)}  Let $ a \in (a_i , a_{i+1})$ then for $u \in \mathbb{E}_{i+1}$. Then by using Proposition \ref{prophg1}, we have 
	 \begin{align*}
	 \J(u) \geq \frac{1}{2}\left(1- \frac{a}{a_{i+1}}\right) \|u\|^2 - \frac{1}{2Q^*_\la}C_1\|u\|^{2Q^*_\la}. 
	 \end{align*}
	 Now using the fact that $2<2Q^*_\la$, we can choose $\al,\; \rho>0$ such that $\J(u) \geq \al$ for $ u \in \mathbb{E}_{i+1}$ and $\|u\|= \rho$. \\
	 \textit{(ii)} Let $u \in \mathbb{Y}_i$ then $ u = \ds \sum_{k=1}^{k=i} b_k e_k $ where $b_k \in \mathbb{R}$ for all $ k= 1,2,\cdots , i$. Then 
	 \begin{align*}
	 \int_{ \Om}|u|^2~ d\xi = \sum_{k=1}^{k=i} b_k^2 \text{ and } \int_{ \Om}|\na u |^2 ~ d\xi = \sum_{k=1}^{k=i} b_k^2 \|e_k\|^2.
	 \end{align*}
	 Combining this with the fact that $ a_k\leq a_i\leq a$, we deduce that 
	 \begin{align*}
	 \J(u) = \frac{1}{2}\sum_{k=1}^{k=i} b_k^2\left(  \|e_k\|^2- a \right) - \frac{1}{2Q^*_\la} \|u\|_{FL}^{2Q^*_\la}< \frac{1}{2}\sum_{k=1}^{k=i} b_k^2\left(  a_k- a \right) \leq 0. 
	 \end{align*}
	 \textit{(iii)} For $ u \in D \setminus \{0\}$,  using the fact that all norms on finite dimensional space  are equivalent  and $a\geq 0$, we get 
	 \begin{align*}
	 \J(u) & \leq \frac{1}{2} \|u\|^2 - \frac{1}{2Q^*_\la} \|u\|_{FL}^{2Q^*_\la} \leq  \frac{1}{2} \|u\|^2 - \frac{C_1}{2Q^*_\la} \|u\|^{2Q^*_\la},
	 \end{align*}
	 for some appropriate positive constant. Hence $\J(u) \ra - \infty$ as $\|u\|\ra \infty$. Thus we can choose $L>\rho$ such that for any $u \in D$ with $\|u\|\geq L,\; \J(u) \leq 0$.  \QED 
\end{proof}
In view of Lemma \ref{lemhg8}, we know that for $\ba$ large enough 
\begin{align*}
\frac{\|w_\ba \|^2 -a |w_\ba|^2_2}{\|w_\ba\|_{FL}^2}< S_{HG}
\end{align*}
Now for $ i \in \mathbb{N}$, we define the linear space 
\begin{align*}
\mathbb{M}_{i,\ba } := \text{ span}\{ e_1,e_2,\cdots,e_i,w_\ba \}
\end{align*}
and set 
\begin{align*}
m_{i,\ba}:= \max_{u \in \mathbb{M}_{i,\ba },\; \|u\|_{FL}=1}\left(\|u\|^2-a|u|_2^2\right)
\end{align*}
\begin{Lemma}
	Let $a \in [a_i,a_{i+1})$ for some $ i \in \mathbb{N}$, then 
	\begin{itemize}
		\item [(i)]	There exists  $u_m\in \mathbb{M}_{i,\ba}$ such that $m_{i,\ba}$ is achieved at $u_m$ and $u_m= z+tw_\ba$, for some  $z \in \mathbb{Y}_i$.
		\item [(ii)] For $\ba$ large enough, the following holds:
		\begin{align*}
		m_{i,\ba}  \leq \left\{
		\begin{array}{ll}
		(a_i-a) |z|^2_2 ,  &  \text{ if }  t=0, \\
		(a_i-a) |z|^2_2 + G_\ba (1+ |w|_\infty O(\ba^{-\frac{Q-2}{2}}))+ O(\ba^{-\frac{Q-2}{2}})|w|_\infty, & \text{ if } t>0\\
		\end{array} ,
		\right.
		\end{align*}
		where $z$ is defined in \textit{(i)} and 
		\begin{align*}
		G_\ba= \frac{\|w_\ba \|^2 -a |w_\ba|^2_2}{\|w_\ba\|_{FL}^2} .
		\end{align*}
	\end{itemize}
\end{Lemma}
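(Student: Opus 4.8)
The plan is to establish the existence of the maximizer first, then compute the asymptotic bound. For part (i), the set $\{u \in \mathbb{M}_{i,\ba} : \|u\|_{FL}=1\}$ is the intersection of the finite-dimensional space $\mathbb{M}_{i,\ba}$ with the unit sphere of the norm $\|\cdot\|_{FL}$; since all norms on a finite-dimensional space are equivalent, this set is compact, and the continuous functional $u \mapsto \|u\|^2 - a|u|_2^2$ attains its maximum on it at some point $u_m$. Writing a general element of $\mathbb{M}_{i,\ba}$ as $z + t w_\ba$ with $z \in \mathbb{Y}_i$ and $t \in \mathbb{R}$, and noting that replacing $t$ by $-t$ together with $z$ by $-z$ leaves the relevant quantities unchanged, we may assume $t \ge 0$. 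This gives the claimed form.

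For part (ii), first consider the easy case $t=0$: then $u_m = z \in \mathbb{Y}_i$ with $\|z\|_{FL}=1$, and expanding $z = \sum_{k=1}^i b_k e_k$ in the eigenbasis gives $\|z\|^2 - a|z|_2^2 = \sum_k b_k^2(\|e_k\|^2 - a) = \sum_k b_k^2(a_k - a) \le (a_i - a)\sum_k b_k^2 = (a_i-a)|z|_2^2$, using $a_k \le a_i \le a$ and monotonicity. For the case $t > 0$, I would expand
\begin{align*}
\|z + tw_\ba\|^2 - a|z + tw_\ba|_2^2 &= \big(\|z\|^2 - a|z|_2^2\big) + t^2\big(\|w_\ba\|^2 - a|w_\ba|_2^2\big) \\
&\quad + 2t\big(\ld z, w_\ba\rd - a\,(z,w_\ba)_2\big).
\end{align*}
Since $z \in \mathbb{Y}_i = \mathrm{span}\{e_1,\dots,e_i\}$, write $\ld z, w_\ba\rd = \ld z, \sum_k b_k e_k\rd$-type manipulation: integrating by parts, $\ld e_k, w_\ba\rd = a_k (e_k, w_\ba)_2$, so the cross term becomes $2t\sum_k b_k (a_k - a)(e_k, w_\ba)_2$, which is controlled by $|w_\ba|_\infty$ (or the relevant $L^2$-pairing) times a constant, producing a factor of order $|w|_\infty O(\ba^{-(Q-2)/2})$ after tracking the mass of $w_\ba$ on the support region. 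Combining the first bracket via the $t=0$ estimate, the second bracket with the definition of $G_\ba$ and the constraint $\|z+tw_\ba\|_{FL}=1$ (which forces $t\|w_\ba\|_{FL} \le 1 + O(\cdots)$ by the reverse triangle inequality for $\|\cdot\|_{FL}$ and the near-orthogonality of the supports), and the cross term estimate, yields the stated inequality.

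The main obstacle I anticipate is the bookkeeping in the $t>0$ case: one must carefully use the constraint $\|z + tw_\ba\|_{FL} = 1$ to bound both $t$ and $|z|_{FL}$ (hence $|z|_2$ and $|z|_\infty$, by finite-dimensional norm equivalence on $\mathbb{Y}_i$), and simultaneously control the cross terms $\ld z, w_\ba\rd$ and $(z,w_\ba)_2$ against the small mass of $w_\ba$. The key quantitative inputs are the asymptotic estimates from Lemma \ref{lemhg5} — in particular that $|w_\ba|_2^2$, $\int |w_\ba|$, and $\|w_\ba\|_{FL}$ all decay at explicit rates in $\ba$ — together with $\|w_\ba\|_{FL}^{2Q^*_\la} = S_{HG}^{(2Q-\la)/(Q-\la+2)} + O(\ba^{-Q})$, which lets one replace $\|w_\ba\|_{FL}$ by a constant up to negligible error. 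Once these rates are plugged in, the error terms collapse to the advertised $O(\ba^{-(Q-2)/2})$ and $|w|_\infty O(\ba^{-(Q-2)/2})$ contributions, with the leading behaviour governed by $G_\ba$ and the finite-dimensional eigenvalue gap $(a_i - a)$. I would also double-check that the splitting $u_m = z + tw_\ba$ is consistent, i.e. that $w_\ba \notin \mathbb{Y}_i$ so the representation makes sense (this holds since $w_\ba$ is a fixed bump function not lying in the span of finitely many smooth eigenfunctions, or can be arranged by perturbation).
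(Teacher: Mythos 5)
Your part (i) and the $t=0$ case match the paper. The genuine gap is in the $t>0$ case, at the point where you need the coefficient of $G_\ba$ to be $1+o(1)$: you propose to get $t\|w_\ba\|_{FL}\le 1+O(\cdots)$ from "the reverse triangle inequality for $\|\cdot\|_{FL}$ and the near-orthogonality of the supports." The reverse triangle inequality only yields $t\|w_\ba\|_{FL}\le 1+\|z\|_{FL}$, and $\|z\|_{FL}$ is merely bounded, not small — $z$ is a combination of eigenfunctions supported on all of $\Om$, so there is no support separation from $w_\ba$ at all (only concentration of $w_\ba$ near the origin). A coefficient $(1+\|z\|_{FL})^2$ in front of $G_\ba$ would destroy the strict inequality $m_{i,\ba}<S_{HG}$ that this lemma feeds into (Lemma \ref{lemhg10}), so this step cannot be repaired by bookkeeping alone.

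What the paper actually does, and what is missing from your proposal, is twofold. First, it proves the key technical estimate
\begin{align*}
\int_{\Om}\int_{\Om}\frac{|w_\ba(\xi)|^{Q^*_\la}\,|w_\ba(\eta)|^{Q^*_\la-1}}{|\eta^{-1}\xi|^{\la}}\,d\xi\, d\eta \;\leq\; O\big(\ba^{-\frac{Q-2}{2}}\big),
\end{align*}
via a scaling computation reducing to $\int\int |\eta^{-1}\xi|^{-\la}|\xi|^{-(2Q-\la)}|\eta|^{-(Q-\la+2)}<\infty$ (checked separately for $\la>1$ and $\la\le 1$ using Proposition \ref{prophg1}). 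Second, it uses convexity of $s\mapsto |s|^{p}$ to bound $1=\|z+tw_\ba\|_{FL}^{2Q^*_\la}$ from below by $\|tw_\ba\|_{FL}^{2Q^*_\la}$ minus $2Q^*_\la|z|_\infty$ times the above mixed integral; this simultaneously shows $t$ is bounded and gives $\|tw_\ba\|_{FL}^{2Q^*_\la}\le 1+|z|_\infty O(\ba^{-\frac{Q-2}{2}})$, which is exactly the $1+o(1)$ factor you need in front of $G_\ba$. Your decomposition of the quadratic form and your treatment of the cross term via $\ld e_k,w_\ba\rd=a_k(e_k,w_\ba)_2$ and the $L^1$ mass $|w_\ba|_1=O(\ba^{-\frac{Q-2}{2}})$ are consistent with the paper, but without the convexity step and the mixed-integral estimate the central inequality of part (ii) is not established.
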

\begin{proof}
	Since $\mathbb{M}_{i,\ba}$ is finite dimensional space, so there exists  $0\not \equiv u_m \in \mathbb{M}_{i,\ba}$ such that
	\begin{align*}
	m_{i,\ba} = \|u_m\|^2-a|u_m|_2^2\quad  \text{ and } \quad  \|u_m\|_{FL}=1.
	\end{align*}  
	From definition of $\mathbb{M}_{i,\ba}$ it is clear that $u_m = z+tw_\ba$ where $z \in \mathbb{Y}_i$  and $ t \in \mathbb{R}$. Without loss of generality we can assume that $t \geq 0$ otherwise we can replace $u_m$ with $-u_m$. 
	\textit(ii) If $t=0$ then $u_m= z\in \mathbb{Y}_i$ and 
	\begin{align*}
	m_{i,\ba}= \|u_m\|^2-a|u_m|^2_2\leq (a_i-a) |z|^2_2.
	\end{align*} 
	Consider the case $t>0$. In this case we first claim that   
	\begin{align*}
	\int_{ \Om}\int_{ \Om} \frac{|w_\ba(\xi)|^{Q^*_\la} |w_\ba(\eta)|^{Q^*_{\la}-1}}{|\eta^{-1}\xi|^{\la}}~d \xi d\eta\leq O(\ba^{-\frac{Q-2}{2}}). 
	\end{align*}
	Consider
	\begin{equation}\label{hg17}
	\begin{aligned}
	\int_{ \Om}\int_{ \Om}  \frac{|w_\ba(\xi)|^{Q^*_\la} |w_\ba(\eta)|^{Q^*_{\la}-1}}{|\eta^{-1}\xi|^{\la}}~d \xi d\eta &  \leq \int_{B(0,\kappa R)}\int_{ B(0,\kappa R)} \frac{|W_\ba(\xi)|^{Q^*_\la} |W_\ba(\eta)|^{Q^*_{\la}-1}}{|\eta^{-1}\xi|^{\la}}~d \xi d\eta\\
	& =  \ba^{-\frac{Q-2}{2}}\int_{B(0,\kappa R)}\int_{ B(0,\kappa R)} \frac{~d \xi d\eta }{|\eta^{-1}\xi|^{\la} |\xi|^{2Q-\la}|\eta|^{Q-\la+2}}  
	\\
	& \leq  \ba^{-\frac{Q-2}{2}}\int_{\H}\int_{ \H} \frac{~d \xi d\eta }{|\eta^{-1}\xi|^{\la} |\xi|^{2Q-\la}|\eta|^{Q-\la+2}} . 
	\end{aligned}
	\end{equation}
	Now using Proposition \ref{prophg1} with $\la >1$ we have 
\begin{equation}\label{hg18}
	\begin{aligned}
	\int_{\H}\int_{ \H}  \frac{~d \xi d\eta }{|\eta^{-1}\xi|^{\la} |\xi|^{2Q-\la}|\eta|^{Q-\la+2}}  
	& \leq C\left( \int_{ \H}  \frac{~d \xi  }{ |\xi|^{\frac{(2Q-\la)Q}{(Q-1)}}} \right)^{\frac{Q-1}{Q}}	\left( \int_{\H}\frac{~ d\eta }{ |\eta|^{\frac{(Q-\la+2)Q}{(Q-\la+1)}}} \right)^{\frac{Q-\la+1}{Q}}\\
	&  < + \infty, 
	\end{aligned}
	\end{equation}
	and for $\la\leq 1$ again using Proposition \ref{prophg1} yields 
\begin{equation}\label{hg19}
\begin{aligned}
	\int_{\H}\int_{ \H}  \frac{~d \xi d\eta }{|\eta^{-1}\xi|^{\la} |\xi|^{2Q-\la}|\eta|^{Q-\la+2}}  
	& \leq C\left( \int_{ \H}  \frac{~d \xi  }{ |\xi|^{2Q} }\right)^{\frac{2Q-\la}{2Q}}	\left( \int_{\H}\frac{~ d\eta }{ |\eta|^{\frac{2Q(Q-\la+2)}{2Q-\la}}} \right)^{\frac{2Q-\la}{2Q}}\\
	&  < + \infty.
\end{aligned}
\end{equation}
Inequality \eqref{hg17} coupled with \eqref{hg18} and \eqref{hg19} proves the claim. Taking into account the  fact  that $ e_1,e_2,\cdots, e_i \in L^{\infty}(\Om)$ we have $z \in L^\infty(\Om)$ and since $|x|^p$ is convex for all $p>1$, we obtain the following estimates:
\begin{align*}
1= & \int_{ \Om}\int_{ \Om} \frac{|u_m(\xi)|^{Q^*_\la} |u_m(\eta)|^{Q^*_{\la}}}{|\eta^{-1}\xi|^{\la}}~d \xi d\eta\\
=& \int_{ \Om}\int_{ \Om} \frac{|z+tw_\ba(\xi)|^{Q^*_\la} |z+tw_\ba(\eta)|^{Q^*_{\la}}}{|\eta^{-1}\xi|^{\la}}~d \xi d\eta\\
 \geq& \int_{ \Om}\int_{ \Om} \frac{|tw_\ba(\xi)|^{Q^*_\la} |tw_\ba(\eta)|^{Q^*_{\la}}}{|\eta^{-1}\xi|^{\la}}~d \xi d\eta+ 2Q^*_\la \int_{ \Om}\int_{ \Om} \frac{|tw_\ba(\xi)|^{Q^*_\la} |tw_\ba(\eta)|^{Q^*_{\la}-2}tw_\ba(\eta) z(\eta)}{|\eta^{-1}\xi|^{\la}}~d \xi d\eta\\
 \geq& \int_{ \Om}\int_{ \Om} \frac{|tw_\ba(\xi)|^{Q^*_\la} |tw_\ba(\eta)|^{Q^*_{\la}}}{|\eta^{-1}\xi|^{\la}}~d \xi d\eta- 2Q^*_\la|z|_\infty \int_{ \Om}\int_{ \Om} \frac{|tw_\ba(\xi)|^{Q^*_\la} |tw_\ba(\eta)|^{Q^*_{\la}-2}tw_\ba(\eta) }{|\eta^{-1}\xi|^{\la}}~d \xi d\eta. 
\end{align*}
That is,
\begin{equation*}
\begin{aligned}
\int_{ \Om}\int_{ \Om} & \frac{|tw_\ba(\xi)|^{Q^*_\la} |tw_\ba(\eta)|^{Q^*_{\la}}}{|\eta^{-1}\xi|^{\la}}~d \xi d\eta\\
& \leq 1+ 2Q^*_\la|z|_\infty \int_{ \Om}\int_{ \Om} \frac{|tw_\ba(\xi)|^{Q^*_\la} |tw_\ba(\eta)|^{Q^*_{\la}-2}tw_\ba(\eta) }{|\eta^{-1}\xi|^{\la}}~d \xi d\eta\\
&\quad \leq  1+ t^{2Q^*_\la-1}O(\ba^{-\frac{Q-2}{2}}). 
\end{aligned}
\end{equation*}
	It implies there exists a constant $c_1>0$ such that $t< c_1$. Hence 
	\begin{equation}\label{hg20}
	\begin{aligned}
	\int_{ \Om}\int_{ \Om}  \frac{|tw_\ba(\xi)|^{Q^*_\la} |tw_\ba(\eta)|^{Q^*_{\la}}}{|\eta^{-1}\xi|^{\la}}~d \xi d\eta  \leq  1+ |z|_\infty O(\ba^{-\frac{Q-2}{2}}). 
	\end{aligned}
	\end{equation}
	 As a consequence of \eqref{hg20} and  definition of $a_i$, we get 
\begin{equation*}
\begin{aligned}
	m_{i, \ba} &=  \|z+tw_\ba\|^2-a|z+tw_\ba|^2_2\\
& 	\leq (a_i-a) |z|^2_2 +G_\ba \left(  \int_{ \Om}\int_{ \Om}  \frac{|tw_\ba(\xi)|^{Q^*_\la} |tw_\ba(\eta)|^{Q^*_{\la}}}{|\eta^{-1}\xi|^{\la}}~d \xi d\eta \right)^{\frac{Q-2}{2Q-\la}}+ C|z|_\infty |w_\ba|_1\\
& \leq (a_i-a) |z|^2_2 +G_\ba \left(  1+ |z|_\infty O(\ba^{-\frac{Q-2}{2}}) \right)+ C|z|_\infty |w_\ba|_1,
		\end{aligned}
	\end{equation*}
for some suitable constant $C>0$. By some straightforward calculations, we can show that $|w_\ba |_1\leq O(\ba^{-\frac{Q-2}{2}})$. Therefore, 
	\begin{equation*}
	m_{i, \ba}  \leq (a_i-a) |z|^2_2 +G_\ba \left(  1+ |z|_\infty O(\ba^{-\frac{Q-2}{2}}) \right)+ C|z|_\infty O(\ba^{-\frac{Q-2}{2}}). \qquad \qquad \text{ \QED}
	\end{equation*}	
\end{proof}
\begin{Lemma}\label{lemhg10}
	Let $ a \in (a_i, a_{i+1}) $ for some $i \in \mathbb{N}$ then for any $u \in \mathbb{M}_{i,\ba}$, 
		\begin{align*}
	\frac{\|u\|^2 -a |u|^2_2}{\|u\|_{FL}^2}< S_{HG}.
	\end{align*}
\end{Lemma}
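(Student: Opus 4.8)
The plan is to prove the equivalent statement $m_{i,\ba}<S_{HG}$ for a suitably large $\ba$; the Lemma then follows at once, since for any $u\in\mathbb{M}_{i,\ba}\setminus\{0\}$ the quotient $(\|u\|^2-a|u|_2^2)/\|u\|_{FL}^2$ is invariant under scaling, so replacing $u$ by $u/\|u\|_{FL}$ shows it is $\le m_{i,\ba}$. Let $u_m=z+tw_\ba$ (with $z\in\mathbb{Y}_i$, $t\ge 0$) be a point where $m_{i,\ba}$ is attained, which exists because the $\|\cdot\|_{FL}$-unit sphere of the finite dimensional space $\mathbb{M}_{i,\ba}$ is compact. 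If $t=0$ then $\|z\|_{FL}=1$, so $z\neq 0$, and since $\|z\|^2\le a_i|z|_2^2$ on $\mathbb{Y}_i$ we get $m_{i,\ba}=\|z\|^2-a|z|_2^2\le (a_i-a)|z|_2^2<0<S_{HG}$, which settles this case.

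So assume $t>0$. I would start from the estimate established in the preceding Lemma,
\[
m_{i,\ba}\ \le\ (a_i-a)|z|_2^2+G_\ba\bigl(1+|z|_\infty O(\ba^{-(Q-2)/2})\bigr)+C|z|_\infty O(\ba^{-(Q-2)/2}),
\]
where $G_\ba=(\|w_\ba\|^2-a|w_\ba|_2^2)/\|w_\ba\|_{FL}^2$. The first thing to verify is that $|z|_\infty$, and hence $|z|_2$, is bounded \emph{uniformly in} $\ba$: the subspace $\mathbb{Y}_i=\mathrm{span}\{e_1,\cdots,e_i\}$ is fixed and consists of $L^\infty$ functions, so all norms on it are comparable with $\ba$-independent constants, and $\|z\|_{FL}\le\|u_m\|_{FL}+t\|w_\ba\|_{FL}=1+t\|w_\ba\|_{FL}$, where $\|w_\ba\|_{FL}$ is bounded by Lemma \ref{lemhg5}(iii) and $t$ is bounded (as shown in the preceding Lemma). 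Consequently the $O(\ba^{-(Q-2)/2})$ terms are genuine infinitesimals and $G_\ba$ is bounded (it converges to $S_{HG}$). I would then apply Young's inequality to absorb the cross terms: $(G_\ba+C)\,|z|_\infty O(\ba^{-(Q-2)/2})\le \varepsilon|z|_\infty^2+C_\varepsilon\ba^{-(Q-2)}\le \varepsilon C_i|z|_2^2+C_\varepsilon\ba^{-(Q-2)}$, where $C_i$ is the fixed norm-equivalence constant on $\mathbb{Y}_i$. Choosing $\varepsilon$ so small that $a_i-a+\varepsilon C_i<0$ (possible since $a>a_i$), the coefficient of $|z|_2^2$ becomes negative and that term may be dropped, leaving $m_{i,\ba}\le G_\ba+C_\varepsilon\ba^{-(Q-2)}$.

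To conclude I would invoke the quantitative form of the estimate in Lemma \ref{lemhg8} (from its proof): for $\ba$ large, $G_\ba\le S_{HG}\bigl(1-c\,\ba^{-2}\bigr)$ when $Q>4$ and $G_\ba\le S_{HG}\bigl(1-c\,\ba^{-2}\log\ba\bigr)$ when $Q=4$. Since $Q=2N+2\ge 4$ we have $Q-2\ge 2$, so the error $C_\varepsilon\ba^{-(Q-2)}$ is of strictly smaller order than the gain $c\,S_{HG}\ba^{-2}$ when $Q>4$, and than $c\,S_{HG}\ba^{-2}\log\ba$ when $Q=4$; hence $m_{i,\ba}<S_{HG}$ once $\ba$ is large enough, and we fix such a $\ba$. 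The delicate point is precisely this last step in low dimensions ($Q=4,6$): there the cross terms coming from the failure of $\|\cdot\|_{FL}$-orthogonality between $w_\ba$ and $\mathbb{Y}_i$ are \emph{not} of smaller order than $S_{HG}-G_\ba$, so they cannot simply be bounded away — they must be absorbed into the strictly negative term $(a_i-a)|z|_2^2$, which is why the uniform-in-$\ba$ control of $|z|_\infty$ (made possible by $\mathbb{Y}_i$ being fixed) is essential.
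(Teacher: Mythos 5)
Your proposal is correct and follows the paper's own route: reduce to showing $m_{i,\ba}<S_{HG}$, treat $t=0$ and $t>0$ separately, and combine the estimate of the preceding lemma with the expansion of $G_\ba$ obtained in the proof of Lemma \ref{lemhg8}. The one place you go beyond the paper is the Young's-inequality absorption of the $|z|_\infty\,O(\ba^{-(Q-2)/2})$ errors into the strictly negative term $(a_i-a)|z|_2^2$ (using the $\ba$-independent norm equivalence on the fixed finite-dimensional space $\mathbb{Y}_i$); the paper instead passes directly to $m_{i,\ba}\le S_{HG}+(a_i-a)|z|_2^2+C_1\ba^{-\min\{\frac{2Q-\la}{2},\frac{Q-2}{2}\}}<S_{HG}$, which as written is not justified when $|z|_2$ is small relative to the positive error term, so your version in fact repairs a gap at exactly the step you single out as delicate.
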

\begin{proof}
	It is enough to show that 
	\begin{align*}
	m_{i,\ba}= \max_{u \in \mathbb{M}_{i,\ba },\; \|u\|_{FL}=1}\left(\|u\|^2-a|u|_2^2\right) <S_{HG}
	\end{align*}
	If $t=0$ then by using the fact that $ a \in (a_i, a_{i+1}) $ and  \eqref{hg20}, we get that 
\begin{align*}
m_{i,\ba}  \leq (a_i-a) |z|^2_2 < S_{HG}. 
\end{align*}
If $t>0$ we divide the proof in two cases viz, $Q>4$ and $Q=4$. If $Q>4$ then  by using the proof of case 1 of Lemma \ref{lemhg8}, 
 \begin{equation*}
 \begin{aligned}
 m_{i, \ba} & 
  \leq (a_i-a) |z|^2_2 +\frac{\|w_\ba \|^2 -a |w_\ba|^2_2}{\|w_\ba\|_{FL}^2} \left(  1+ |z|_\infty O(\ba^{-\frac{Q-2}{2}}) \right)+ C|z|_\infty O(\ba^{-\frac{Q-2}{2}})\\
 &  \leq (a_i-a) |z|^2_2 +   S_{HG} \left( 1 + C_1 \ba^{-\min\{ {\frac{2Q-\la}{2}}, Q-2\}}  -C_2 \ba^{-2}  \right)  \left(  1+ |z|_\infty O(\ba^{-\frac{Q-2}{2}}) \right)\\ &\quad + C|z|_\infty O(\ba^{-\frac{Q-2}{2}})\\
 &  \leq S_{HG} +   (a_i-a) |z|^2_2 +      C_1 \ba^{-\min\{ {\frac{2Q-\la}{2}},  \frac{Q-2}{2}  \}}\\
 & < S_{HG},  
 \end{aligned}
 \end{equation*}
 for $ \ba $  sufficiently large enough. If $Q=4$ then again utilizing the proof of Lemma \ref{lemhg8}, we have 
  \begin{equation*}
 \begin{aligned}
 m_{i, \ba} & 
 \leq (a_i-a) |z|^2_2 +\frac{\|w_\ba \|^2 -a |w_\ba|^2_2}{\|w_\ba\|_{FL}^2} \left(  1+ |z|_\infty O(\ba^{-1}) \right)+ C|z|_\infty O(\ba^{-1})\\
 &  \leq (a_i-a) |z|^2_2 +    S_{HG} \left( 1 + C_1 \ba ^{-2}  -C_2\ba^{-2} \log\; \ba \right)  \left(  1+ |z|_\infty O(\ba^{-1}) \right)+ C|z|_\infty O(\ba^{-1})\\
 &  \leq S_{HG} +   (a_i-a) |z|^2_2 +      C|z|_\infty O(\ba^{-1})\\
 & < S_{HG},  
 \end{aligned}
 \end{equation*}
 for $\ba$ large. \QED
\end{proof}
\textbf{Proof of main Theorem \ref{thmhg1}(i):}   Let $a > a_1$. At this stage, we write $u_m= \hat{z}+t v_\ba$, where 
\begin{align*}
& \hat{z}= z+ t \sum_{k=1}^{k=i} \left( \int_{ \Om} w_\ba e_k~d \xi \right)e_k \in \mathbb{Y}_i\\
\text{ and } \quad & v_\ba = w_\ba - t \sum_{k=1}^{k=i} \left( \int_{ \Om} w_\ba e_k~d \xi \right)e_k. 
\end{align*}
This implies $\hat{z}$ and $v_\ba$ are orthogonal in $L^2(\Om)$. Moreover,
\begin{align*}
|u_m|^2_2= |\hat{z}|^2_2+ t^2 |v_\ba|^2_2.
\end{align*}
Therefore, we can write 
\begin{align*}
\mathbb{M}_{i,\ba}= \mathbb{Y}_i \oplus \mathbb{R} v_\ba. 
\end{align*}
From Lemma \ref{lemhg9}, we have the following:
\begin{enumerate}
	\item [(i)] $\ds \inf_{u \in \mathbb{E}_{i+1}, \|u\|= \rho} \J(u)\geq  \al >0$, where $\al$ and $\rho$ is defined in Lemma \ref{lemhg9}. 
	\item [(ii)] $\ds \sup_{u \in \mathbb{Y}_i} \J(u)<0$  and  $\ds \sup_{u \in \mathbb{M}_{i,\ba}, \|u\|>\rho} \J(u) \leq 0$. 
\end{enumerate}
Therefore by Linking Theorem \cite[Theorem 5.3]{rainbo}, $\J$ possesses a critical point $c^* \geq \al$ which can be characterized by 
\begin{align*}
c^*= \inf_{\ga \in \Gamma}\sup_{u \in V} \J(\ga(u)),
\end{align*}  
where 
\begin{align*}
V:= (\overline{B}_\rho\cap \mathbb{Y}_i) \oplus \{ rv_\ba\; :\; r \in (0, \rho)  \}
\text{ and } \Gamma: = \{  \ga \in C(\overline{V},\mathring{S}_1^2(\Om))\; :\; \ga = \text{ id on  } \pa V \}. 
\end{align*}
By the definition of  $c^*$, for any $ \ga \in \Gamma$ we have 
$c^*\leq \ds \max_{u \in V} \J(\ga(u))$. In particular, if we take $ \ga= \text{ id}$ on $\overline{V}$ then $c^*\leq \ds \max_{u \in V} \J(u) \leq \max_{u \in \mathbb{M}_{i,\ba}} \J(u) $. By using the fact that $\mathbb{M}_{i,\ba}$ is a linear space, we deduce that
\begin{align*}
\max_{u \in \mathbb{M}_{i,\ba} }\J(u)= \max_{u \in \mathbb{M}_{i,\ba}, t \not = 0} \J\left(\frac{|t|u}{|t|}\right)=   \max_{u \in \mathbb{M}_{i,\ba},t>0} \J(tu) \leq \max_{u \in \mathbb{M}_{i,\ba}, t \geq 0} \J(tu).
\end{align*}
Now observe that for any $ u \in \mathring{S}_1^2(\Om)$,
\begin{align*}
\max_{t \geq 0} \J(tu) = \frac{Q+2-\la}{2(2Q-\la)} \left(  \frac{\|u\|^2 -a |u|^2_2}{\|u\|_{FL}^2} \right)^{\frac{2Q-\la}{Q+2-\la}},
\end{align*}
combining this with Lemma \ref{lemhg10}, we have 
\begin{align*}
c^* & \leq \max_{u \in \mathbb{M}_{i,\ba}, t \geq 0} \J(tu) = \max_{u \in \mathbb{M}_{i,\ba}} \frac{Q+2-\la}{2(2Q-\la)} \left(  \frac{\|u\|^2 -a |u|^2_2}{\|u\|_{FL}^2} \right)^{\frac{2Q-\la}{Q+2-\la}} < \frac{Q+2-\la}{2(2Q-\la)} S_{HG}^{\frac{2Q-\la}{Q+2-\la}}.
\end{align*}
Therefore, by employing Lemma \ref{lemhg7} coupled with Linking theorem, we conclude that there exists a nontrivial solution  of $(P_a)$ in $\mathring{S}_1^2(\Om)$. \QED

\section{Regularity and Non-existence}
In this section we will give the regularity of solutions and nonexistence of solutions. Throughout this section we shall  always denote by $u$  a  weak solution of $(P_a)$ and  $\Om$   to be smooth bounded domain in $\H$ with $C^2$ boundary.  In subsection 5.1, our goal is to prove $L^p$, H\"older continuity properties of  weak solution $u$ to  $(P_a)$. 
We will seek help of  iteration techniques and boot-strap method  given by  Brezis-Kato (See \cite{kato})  and Moser (See \cite[Chapter 8]{gilbarg}). In case of Choquard equation in Euclidean domain, Brezis-Kato type of result was given by Moroz and schaftingen (\cite{moroz2}). In subsection 5.2 we gave a Pohozaev type identity (See Lemma \ref{lemhg14}) for convolution type nonlinearity for Kohn Laplacian which itself a novel  result and of autonomous interest. We also give the proof of Theorem \ref{thmhg2}. 
\subsection{Regularity Results}
\begin{Lemma}\label{lemhg11}
	Let $0<\la<Q$ and $\theta \in (0,2)$. If $F,\; G \in L^{\frac{2Q}{Q-\la+2}}(\H) + L^{\frac{2Q}{Q-\la}}(\H) $ and $ \frac{Q-\la}{Q} < \theta <2- \frac{Q-\la}{Q}$ then for every $\e>0$ there exists $C_{\e,\theta} \in \mathbb{R}$ such that for every $u \in S_1^2(\H)$,
	\begin{align*}
	\int_{ \H}(|\xi|^{-\la}* (F|u|^{\theta}))G|u|^{2-\theta}~ d\xi \leq \e^2\int_{ \H}|\na u|^2 ~ d\xi + C_{\e,\theta} \int_{ \H}|u|^2~ d\xi.
	\end{align*}
\end{Lemma}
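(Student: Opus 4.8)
The plan is to reduce the estimate to the Hardy–Littlewood–Sobolev inequality (Proposition \ref{prophg1}) applied on each piece of the splitting of $F$ and $G$, and then absorb the $|\na u|$-contributions using the Sobolev inequality \eqref{hg2} together with Young's inequality to generate the $\e^2$ and $C_{\e,\theta}$. First I would write $F = F_1 + F_\infty$ and $G = G_1 + G_\infty$ with $F_1, G_1 \in L^{\frac{2Q}{Q-\la+2}}(\H)$ and $F_\infty, G_\infty \in L^{\frac{2Q}{Q-\la}}(\H)$, and expand the left-hand side into four terms
\[
\int_{\H}\bigl(|\xi|^{-\la}*(F_i|u|^{\theta})\bigr)G_j|u|^{2-\theta}\,d\xi, \qquad i,j \in \{1,\infty\}.
\]
For each term I would apply Proposition \ref{prophg1} with exponents $r,s$ chosen so that $\frac1r + \frac\la Q + \frac1s = 2$: writing $F_i|u|^\theta \in L^r$ and $G_j|u|^{2-\theta}\in L^s$, Hölder's inequality gives bounds of the form $|F_i|_{p_i}\,|u|_{q}^{\theta}$ and $|G_j|_{p_j}\,|u|_{q'}^{2-\theta}$, where $p_1 = p_1' = \frac{2Q}{Q-\la+2}$ forces the corresponding $L^q$-norm of $|u|$ to be $|u|_{Q^*}$ (the Sobolev-critical exponent), while $p_\infty = p_\infty' = \frac{2Q}{Q-\la}$ forces the $|u|$-norm to be $|u|_2$. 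The hypothesis $\frac{Q-\la}{Q} < \theta < 2 - \frac{Q-\la}{Q}$ is precisely what guarantees all these intermediate Hölder exponents are admissible (i.e. the powers of $|u|$ land between $2$ and $Q^*$, so that interpolation between $L^2$ and $L^{Q^*}$ is available for the mixed terms).

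After this step each of the four terms is bounded by a constant (depending on the fixed norms of $F_i, G_j$) times a product $|u|_{Q^*}^{\alpha}\,|u|_2^{\,2-\alpha}$ for some $\alpha \in [0,2]$ determined by which pieces $i,j$ were used: $\alpha = 2$ for the $(1,1)$-term, $\alpha = 0$ for the $(\infty,\infty)$-term, and an intermediate value for the mixed terms (here one first interpolates $|u|_q \le |u|_2^{1-s}|u|_{Q^*}^{s}$ for the fractional powers). Then I would invoke the Folland–Stein inequality \eqref{hg2}, $|u|_{Q^*} \le C_Q\|u\|_{S_1^2(\H)}$, but more carefully the homogeneous version to replace $|u|_{Q^*}^2$ by a constant times $\int_\H |\na u|^2\,d\xi$; for the mixed exponents $0<\alpha<2$ one applies Young's inequality $xy \le \e' x^{2/\alpha} + C_{\e'} y^{2/(2-\alpha)}$ to separate $|u|_{Q^*}^{\alpha}$ from $|u|_2^{2-\alpha}$, turning the product into $\e'\,|u|_{Q^*}^2 + C_{\e'}\,|u|_2^2$. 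Summing the four contributions and choosing $\e'$ small relative to $\e$ yields the claimed inequality with $C_{\e,\theta}$ depending on $\e$, $\theta$, and the (fixed) norms of the pieces of $F$ and $G$.

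The main obstacle is purely bookkeeping: one must track the several Hölder exponents across the four terms and verify that the constraint $\frac{Q-\la}{Q} < \theta < 2-\frac{Q-\la}{Q}$ makes every one of them lie in the admissible range $[1,\infty)$ with the resulting $|u|$-exponents in $[2, Q^*]$, and that for the two genuinely mixed terms the interpolation power $s$ of $|u|_{Q^*}$ stays strictly less than $2$ so the absorption into $\e^2\int|\na u|^2$ is legitimate with $\e^2$ arbitrarily small. No single estimate is hard, but the symmetry of the roles of $F$ and $G$ (and of $|u|^\theta$ versus $|u|^{2-\theta}$) should be exploited to cut the casework roughly in half. This is exactly the Heisenberg-group analogue of the Brezis–Kato-type lemma of Moroz–Van Schaftingen \cite{moroz2}, with $\R$ replaced by $\H$ and the Riesz kernel $|x-y|^{-\la}$ replaced by $|\eta^{-1}\xi|^{-\la}$, so the argument carries over verbatim once \eqref{hg1} and \eqref{hg2} are in hand.
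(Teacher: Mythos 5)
The paper itself does not write this argument out: its proof of Lemma \ref{lemhg11} is a one-line citation of Lemma 3.2 in Moroz--Van Schaftingen \cite{moroz2}, with $\R$ replaced by $\H$. Your sketch reconstructs that argument in the right spirit (split $F$ and $G$ into their two summands, apply Proposition \ref{prophg1} to each of the four resulting terms, note that the hypothesis on $\theta$ is exactly what makes the exponents in the $(\infty,\infty)$ term admissible, interpolate for the mixed terms, then absorb via \eqref{hg2} and Young).

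There is, however, a genuine gap in how you produce the factor $\e^2$. For the $(1,1)$ term, Proposition \ref{prophg1} and H\"older give
\[
\int_{\H}\bigl(|\xi|^{-\la}*(F_1|u|^{\theta})\bigr)G_1|u|^{2-\theta}\,d\xi \;\le\; C(Q,\la)\,|F_1|_{\frac{2Q}{Q-\la+2}}\,|G_1|_{\frac{2Q}{Q-\la+2}}\,|u|_{Q^*}^{2},
\]
so after \eqref{hg2} the coefficient of $\int_{\H}|\na u|^2$ is a \emph{fixed} constant determined by $F_1$ and $G_1$. Since the exponent of $|u|_{Q^*}$ here is exactly $2$, there is no companion factor of $|u|_2$ into which Young's inequality can push the excess; ``choosing $\e'$ small'' does nothing for this term, and the conclusion you reach is only $C_0\int|\na u|^2+C\int|u|^2$ with $C_0$ not controllable. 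The missing idea --- and the one actually used in \cite{moroz2} --- is to make the decomposition itself depend on $\e$: because $\tfrac{2Q}{Q-\la+2}<\tfrac{2Q}{Q-\la}$, any $F\in L^{\frac{2Q}{Q-\la+2}}(\H)+L^{\frac{2Q}{Q-\la}}(\H)$ can, for each $\delta>0$, be written as $F=F_1+F_\infty$ with $|F_1|_{\frac{2Q}{Q-\la+2}}\le\delta$ and $F_\infty\in L^{\frac{2Q}{Q-\la}}(\H)$: truncate the $L^{\frac{2Q}{Q-\la+2}}$ component at a large level $M$, so that the part above level $M$ has small norm by dominated convergence while the bounded remainder lies in the higher Lebesgue space. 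With $\delta$ chosen in terms of $\e$, every term carrying at least one factor from $L^{\frac{2Q}{Q-\la+2}}$ acquires the small coefficient, and only the $(\infty,\infty)$ term --- which is controlled by $|u|_2^2$ alone --- carries a large constant; that is where $C_{\e,\theta}$ comes from. A secondary point: for the mixed terms the naive pairing ($|u|^{\theta}$ in $L^{Q^*/\theta}$ against $F_1$ and $|u|^{2-\theta}$ in $L^{2/(2-\theta)}$ against $G_\infty$) violates the scaling condition $1/r+\la/Q+1/s=2$ of Proposition \ref{prophg1} unless $\theta=1$; both powers of $u$ must be placed in suitably chosen intermediate spaces $L^{t}$ with $t\in[2,Q^*]$, and one must check $r,s>1$ there as well --- this is where the bookkeeping you allude to actually lives.
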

\begin{proof}
	Using the same arguments as in \cite[Lemma 3.2]{moroz2} one can easily proof the lemma. All one need to do is to replace $\R$ with  $\H$. \QED
\end{proof}

\begin{Proposition}\label{prophg3}
	If  $ F, G \in L^{\frac{2Q}{Q-\la}}(\H)+ L^{\frac{2Q}{Q-\la+2}}(\H)$ and $u \in S_1^2(\H)$ is a solution of 
	\begin{align}\label{hg22}
	-\De u +u =  (|\xi|^{-\la}* Fu)G,
	\end{align}
	then
	\begin{align*}
	u \in L^s(\H) \text{ for every } s \in \bigg[2, \frac{2Q^2}{(Q-2)(Q-\la)}\bigg) .
	\end{align*}
%	 In addition, there exists a constants $C_s>0$ independent of $u$ such that 
%	\begin{align*}
%	\left( \int_{ \H} |u|^s~ d \xi\right)^{\frac{1}{s}} \leq C_s \left( \int_{ \H} |u|^2~ d \xi\right)^{\frac{1}{2}}. 
%	\end{align*} 
\end{Proposition}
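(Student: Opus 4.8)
The plan is to run a Brezis--Kato / Moser iteration adapted to the nonlocal term, in the spirit of Moroz--Van Schaftingen \cite{moroz2}, using the Folland--Stein--Sobolev inequality \eqref{hg2} to control the Dirichlet form from below and Lemma \ref{lemhg11} to absorb the right--hand side. Since $u\in S_1^2(\H)\subset L^2(\H)$ and $u\in L^{Q^*}(\H)$ by \eqref{hg2}, interpolation already gives $u\in L^q(\H)$ for every $q\in[2,Q^*]$, so it suffices to push the integrability up to --- but not including --- $\frac{2Q^2}{(Q-2)(Q-\la)}=Q^*\cdot\frac{Q}{Q-\la}$ by finitely many iteration steps.

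For a single step I would fix $s\ge 1$ and $K\ge 1$ and test \eqref{hg22} with $\varphi_{s,K}:=u\,\min\{|u|,K\}^{2(s-1)}$, which lies in $S_1^2(\H)$ since $|\varphi_{s,K}|\le K^{2(s-1)}|u|$ and, by the chain rule for the horizontal derivations, $|\na\varphi_{s,K}|\le(2s-1)K^{2(s-1)}|\na u|$. Writing $w_{s,K}:=u\,\min\{|u|,K\}^{s-1}$, the elementary identities $u\varphi_{s,K}=w_{s,K}^2$ and $\na u\cdot\na\varphi_{s,K}\ge\frac1s|\na w_{s,K}|^2$ hold, so pairing with the equation gives
\begin{equation*}
\frac1s\,\|w_{s,K}\|_{S_1^2(\H)}^2\ \le\ \int_{\H}\big(|\xi|^{-\la}*(Fu)\big)\,G\,\varphi_{s,K}\,d\xi .
\end{equation*}
Next I would split the double integral on the right according to whether $|u|\le K$ or $|u|>K$ at each of the two variables. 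On $\{|u|\le K\}$ one has $|u|=|w_{s,K}|^{1/s}$, $|\varphi_{s,K}|=|w_{s,K}|^{2-1/s}$, and $|w_{s,K}|\le K^{s}$, hence $|\varphi_{s,K}|\le K^{s-1}|w_{s,K}|$; on $\{|u|>K\}$ one has $|u|=K^{-(s-1)}|w_{s,K}|$, $|\varphi_{s,K}|=K^{s-1}|w_{s,K}|$, and, since $|u|>K$ there, $K^{s-1}|w_{s,K}|\le|w_{s,K}|^{2-1/s}$. Using these, each of the four pieces is dominated, after enlarging its domain to all of $\H\times\H$, either by $\big(|\xi|^{-\la}*(|F|\,|w_{s,K}|^{\theta})\big)|G|\,|w_{s,K}|^{2-\theta}$ with $\theta=1/s$ or by the same quantity with $\theta=1$, the powers of $K$ cancelling out. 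Applying Lemma \ref{lemhg11} for these two values of $\theta$ --- $\theta=1$ is always admissible, while $\theta=1/s$ requires $\frac{Q-\la}{Q}<\frac1s$, i.e. $s<\frac{Q}{Q-\la}$ --- with $\e$ chosen small enough to absorb the gradient term on the left, I obtain $\|w_{s,K}\|_{S_1^2(\H)}^2\le C\int_{\H}|u|^{2s}\,d\xi$ with $C$ independent of $K$. By \eqref{hg2} and Fatou's lemma, letting $K\to\infty$ (so $|w_{s,K}|\uparrow|u|^{s}$), this yields
\begin{equation*}
\Big(\int_{\H}|u|^{sQ^*}\,d\xi\Big)^{2/Q^*}\ \le\ C'\int_{\H}|u|^{2s}\,d\xi ,
\end{equation*}
that is, the implication $u\in L^{2s}(\H)\Rightarrow u\in L^{sQ^*}(\H)$, valid for every $1\le s<\frac{Q}{Q-\la}$.

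Finally I would iterate this implication, starting from $u\in L^{Q^*}(\H)$: at each step one may take $s$ up to $\min\{\tfrac12 p,\ \tfrac{Q}{Q-\la}\}$, where $p$ is the integrability exponent reached so far; while $\tfrac12 p<\tfrac{Q}{Q-\la}$ the exponent is multiplied by the fixed factor $\tfrac{Q^*}{2}=\tfrac{Q}{Q-2}>1$, so after finitely many steps $p$ exceeds $\tfrac{2Q}{Q-\la}$, and then $s$ may be taken arbitrarily close to $\tfrac{Q}{Q-\la}$, which gives $u\in L^p(\H)$ for every $p<\tfrac{Q}{Q-\la}Q^*=\tfrac{2Q^2}{(Q-2)(Q-\la)}$. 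Combined with $u\in L^q(\H)$ for $q\in[2,Q^*]$, this is the assertion. The main obstacle I expect is exactly the regional bookkeeping in the middle step: the clean factorization $|Fu|=|F|\,|w_{s,K}|^{1/s}$, $|G\varphi_{s,K}|=|G|\,|w_{s,K}|^{2-1/s}$ holds only in the limit $K\to\infty$, so for finite $K$ one must carefully dominate the mixed pieces by Lemma \ref{lemhg11}-type quantities with constants \emph{uniform in $K$}; the Sobolev embedding and the passage to the limit are routine. It is also this step --- the admissible range of $\theta=1/s$ in Lemma \ref{lemhg11} --- that produces the upper endpoint $\frac{2Q^2}{(Q-2)(Q-\la)}$.
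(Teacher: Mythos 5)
Your argument is correct and follows the same overall strategy as the paper: a Brezis--Kato iteration driven by Lemma \ref{lemhg11}, with the admissibility threshold $\theta>\frac{Q-\la}{Q}$ (equivalently $s<\frac{Q}{Q-\la}$ in your normalization) being exactly what produces the upper endpoint $\frac{Q}{Q-\la}\cdot Q^{*}=\frac{2Q^{2}}{(Q-2)(Q-\la)}$, followed by the Folland--Stein inequality \eqref{hg2} and finitely many iterations. The implementation differs in one respect worth recording. The paper first truncates $F,G$ to $F_{n},G_{n}$, constructs approximate solutions $u_{n}$ of a linearized, coercive problem by Lax--Milgram, tests that equation with $|u_{n,\ka}|^{s-2}u_{n,\ka}$, and must then show that the contribution of the exceptional set $\{|u_{n}|>\ka\}$ vanishes as $\ka\to\infty$ via a Hardy--Littlewood--Sobolev estimate, transferring the gain to $u$ only in the limit $n\to\infty$. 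You instead test the original equation directly with the doubly truncated function $u\min\{|u|,K\}^{2(s-1)}$ and replace the vanishing-tail estimate by your four-region domination, which gives constants uniform in $K$ from the outset so that a single application of Fatou's lemma suffices; your regional inequalities ($|u|\le|w_{s,K}|^{1/s}$ and $|\varphi_{s,K}|\le|w_{s,K}|^{2-1/s}$ on each piece, with the powers of $K$ cancelling) check out. This is a legitimate and somewhat leaner route. The one point you should make explicit is why testing with $\varphi_{s,K}\in S_1^2(\H)$ is admissible: the weak formulation of \eqref{hg22} extends from $C_c^{\infty}(\H)$ to all of $S_1^2(\H)$ by density, because the bilinear analogue of Lemma \ref{lemhg11} with $\theta=1$ shows that $\psi\mapsto\int_{\H}(|\xi|^{-\la}*(Fu))G\psi\,d\xi$ is a continuous functional on $S_1^2(\H)$; this justification is precisely the technical role that the Lax--Milgram approximation plays in the paper's version. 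Your final bookkeeping (multiply the exponent by $Q^{*}/2$ until it passes $\frac{2Q}{Q-\la}$, then take $s$ arbitrarily close to $\frac{Q}{Q-\la}$) agrees with the paper's.
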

\begin{proof}
Using Lemma \ref{lemhg11} for $\theta=1$, there exists $\mu >0$ such that for any $ \phi \in S_1^2(\H)$,
	\begin{align*}
\int_{ \H}(|\xi|^{-\la}* (F|\phi|))G|\phi|~ d\xi  \leq \frac{1}{2}\int_{ \H}|\na \phi |^2 ~ d\xi + \frac{\mu}{2} \int_{ \H}|\phi|^2~ d\xi. 
	\end{align*}
Pick sequences $\{F_n\}_{n \in \mathbb{N}}, \{G_n\}_{n \in \mathbb{N}}$  in $L^{\frac{2Q}{Q-\la}}(\mathbb{H}^N)$ such that $|F_n|\leq |F|,\; |G_n| \leq |G|$and $F_n \ra F,\; G_n \ra G$ almost everywhere in $\H$. For each $n \in \mathbb{N},\; \phi \in S_1^2(\H),\;  \psi \in S_1^2(\H)$, define 
\begin{align*}
\Im_n(\phi, \psi)= \int_{ \H}\left( \na \phi \cdot \na \psi  + \mu \phi \psi \right) ~ d \xi- \int_{ \H} \int_{ \H}(|\xi|^{-\la}* F\phi)G\psi~ d\xi.  
\end{align*}
Then $\Im_n: S_1^2(\H) \times S_1^2(\H) \ra \mathbb{R}$ is a bilinear and  coercive map. Hence, by Lax-Miligram theorem, there exists a unique solution of $u_n \in S_1^2(\H)$ of 
\begin{align}\label{hg23}
-\De u_n +\mu u_n =  (|\xi|^{-\la}* F_nu_n)G_n+ (\mu-1)u,
\end{align}
where $u \in S_1^2(\H)$ is the given solution of \eqref{hg22}. By the uniqueness of  functions $u_n$, one  can easily show that  $u_n \rp u $ weakly in $S_1^2(\H)$. For $\ka>0$, the truncation function $u_{n,\ka}: \H \ra \mathbb{R}$ defined as 
\begin{align*}
u_{n,\ka} = \left\{
\begin{array}{ll}
-\ka ,  &  \text{ if }  u_n(\xi)\leq -\ka, \\
u_n(\xi), & \text{ if } -\ka< u_n(\xi)<\ka,\\
\ka ,  &  \text{ if }  u_n(\xi)\geq \ka. \\
\end{array} 
\right.
\end{align*}
For $s\geq 2$, choose $\phi= |u_{n,\ka}|^{s-2}u_{n,\ka} \in S_1^2(\H)$ as test function for \eqref{hg23}, then 
\begin{align*}
 \frac{4(s-1)}{s^2}\int_{ \H} & |\na  (u_{n,\ka})^{s/2}|^2~ d \xi + \int_{ \H} ||u_{n,\ka}|^{s/2}|^2~ d \xi \\
&\leq  (s-1) \int_{ \H} |u_{n,\ka}|^{s-2} |\na u_{n,\ka}|^2 ~ d\xi +\int_{ \H} |u_{n,\ka}|^{s-2} u_{n,\ka}u_n ~ d\xi \\
&   = \int_{ \H}  (|\xi|^{-\la}* F_nu_n)(G_n |u_{n,\ka}|^{s-2}u_{n,\ka})~ d\xi+  \int_{ \H} (\mu-1)u |u_{n,\ka}|^{s-2}u_{n,\ka}~ d\xi. 
\end{align*}
If $2\leq s\leq \frac{2Q}{Q-\la}$ then  employing Lemma \ref{lemhg11} for $ \theta= \frac{2}{s}$, there exists a constant $C>0$ such that 
\begin{align*}
\int_{ \H}  (|\xi|^{-\la}* |F_nu_{n,\ka }|)(G_n |u_{n,\ka}|^{s-2}u_{n,\ka})~ &d\xi  \leq \int_{ \H}  (|\xi|^{-\la}* |F_n||u_{n,\ka }|)(|G_n| |u_{n,\ka}|^{s-1})~ d\xi\\
\leq& \frac{2(s-1)}{s^2}\int_{ \H}  |\na  (u_{n,\ka})^{s/2}|^2~ d \xi + \int_{ \H} ||u_{n,\ka}|^{s/2}|^2~ d \xi. 
\end{align*}
It implies for $B_{n,\ka}:= \{ \xi \in \H \;: \; |u_n(\xi)|> \ka  \}$, we deduce that 
\begin{align*}
\frac{4(s-1)}{s^2}\int_{ \H}  |\na  (u_{n,\ka})^{s/2}|^2~ d \xi  \leq& C^\prime \int_{ \H} (|u_n|^s+ |u|^s )~ d \xi \\ &+ \int_{B_{n,\ka}} (|\xi|^{-\la}* |G_n|||u_n|^{s-1})|F_nu_n|~ d\xi.
\end{align*}
Since $2\leq s\leq \frac{2Q}{Q-\la}$, using Proposition \ref{prophg1}, 
\begin{align}\label{hg24}
\int_{B_{n,\ka}} (|\xi|^{-\la}* |G_n| |u_n|^{s-1})|F_nu_n|~ d\xi \leq C \left( \int_{ \H} \bigg||G_n||u_n|^{s-1}\bigg|^p~ d \xi\right)^{\frac{1}{p}}\left( \int_{B_{n,\ka}} |F_nu_n|^r~ d \xi\right)^{\frac{1}{r}},
\end{align}
where $p$ and $r$ satisfy the relation   $\frac{1}{p}= 1+ \frac{Q-\la}{2Q}-\frac{1}{s}$ and $\frac{1}{r}=  \frac{Q-\la}{2Q}+\frac{1}{s}$. By H\"older's inequality, if $u_n \in L^s(\H)$ then $|G_n||u_n|^{s-1} \in L^p(\H)$ and $ |F_nu_n| \in L^r(\H)$, letting $\ka\ra \infty$, \eqref{hg24} yields
\begin{align*}
\lim_{\ka \ra \infty}\int_{B_{n,\ka}} (|\xi|^{-\la}* |G_n|||u_n|^{s-1})|F_nu_n|~ d\xi =0
\end{align*}
In light of inequality \eqref{hg2}, we deduce that 
\begin{align*}
\limsup_{n \ra \infty} \left( \int_{ \H} |u_n|^{\frac{sQ}{Q-2}}\right)^{\frac{Q-2}{Q}}\leq C_1 \limsup_{n \ra \infty} \int_{ \H} |u_n|^s~ d\xi.
\end{align*} 
Thus, we conclude that $ u \in L^{\frac{sQ}{Q-2}}(\H)$. By iterating over $s$  finite number of times, we can show that $  u \in L^s(\H)$ for all  $ s\in \left[2, \frac{2Q^2}{(Q-\la)(Q-2)}\right)   $. 
%This proves the claim.  Hence $|u|^{Q^*_\la} \in L^s(\H)$ for every$ s\in [\frac{2(Q-2)}{2Q-\la}, \frac{2Q^2}{(Q-\la)(2Q-\la)})   $. 
%Observe that 
%\begin{align*}
%\frac{2(Q-2)}{2Q-\la}< \frac{Q}{Q-\la}<  \frac{2Q^2}{(Q-\la)(2Q-\la)},
%\end{align*}
%it implies 
%\begin{align*}
%w
%\end{align*}
\QED
\end{proof}
\textbf{Proof of Theorem \ref{thmhg1}(ii)}:
	Let $F=G= |u|^{Q^*_\la-1}$, then $F,G \in L^{\frac{2Q}{Q-\la+2}}(\Om)$. With the help of
	Proposition \ref{prophg3}, $  u \in L^s(\Om)$ for all  $s\in\left[2,\frac{2Q^2}{(Q-\la)(Q-2)}\right)$.
	It implies $|u|^{Q^*_\la} \in L^r(\Om)$ for all  $ r\in \left[\frac{2(Q-2)}{2Q-\mu}, \frac{2Q^2}{(Q-\la)(2Q-\mu)}\right)$. Observe that $
	\frac{2(Q-2)}{2Q-\la}< \frac{Q}{Q-\la}<  \frac{2Q^2}{(Q-\la)(2Q-\la)}$,
	it follows that  
	\begin{align*}
	\int_{\Om} \frac{|u|^{Q^*_\la}}{|\eta^{-1}\xi|^\la} ~d\eta \in L^{\infty}(\Om).
	\end{align*}
	   Hence,	   $|-\De u | \leq C(1+|u|^{Q^*-1})$. 
	   Now adopting the technique of Brezis-Kato as presented in \cite[Lemma B.3]{struwe} (See also  \cite[Lemma 4.2]{garofalo1}) gives us
	   $u \in  L^s(\Om)$ for all $s\in [2,\infty)$. 
	Therefore, we deduce that $u \in S_{2,\text{loc}}^p(\Om)$ for all $p \geq 2$. From \cite[Theorem 5.15]{folland1}, it follows that $u \in \varGamma^\ba (\Om)$ with $\ba= 2-\frac{Q}{p}$	if $p>Q$. 
%	Since $\ds\int_{\Om} \frac{|u|^{Q^*_\la}}{|\eta^{-1}\xi|^\la} ~d\eta |u|^{Q^*_\la-2}u\in \varGamma^\al_{\text{loc}}$ for some $\al \in (0,1)$ 
Moreover, by  \cite[Theorem 6.1]{folland1} we have  $u \in \varGamma^{2+\al}(\Om)$. Since $\Om$ is bounded domain  with $C^2$ boundary, we proceed by Moser's iteration technique as given in \cite[Chapter 8]{gilbarg}(See also \cite{lanconelli}), permits to set up the boundary regularity $ u \in\varGamma^{\al_1}(\overline{\Om})$ for some $\al_1 \in (0,1)$.     \QED

\subsection{Nonexistence result for \bm{$(P_\la)$}}
Here we first prove a Pohozaev type inequality and nonexistence result in a star-Shaped domain. First we will state a Lemma which is a consequence of a general integral identity of Rellich type. For a detailed proof, interesting readers are referred to \cite{garofalo, garofalo1}. 
\begin{Lemma}\label{lemhg13}
	Let $\Om \subset \H$ be a smooth, bounded domain with $C^2$ boundary and let $u \in \varGamma^2(\overline{\Om})$. Then 
	\begin{align*}
	2 \int_{\pa \Om} (A\nabla u\cdot N)Xu~dH_{Q-2}& - \int_{\pa \Om}|\na u|^2 X.N~ dH_{Q-2} \\ &= (2-Q) \int_{\Om} |\na u |^2 ~d\xi + 2 \int_{ \Om} Xu \De u ~d \xi,
	\end{align*}
	where $A$ is $(2N+1)\times (2N+1)$ matrix  defined in \eqref{hg25}, $N$ is the outer unit normal to $\pa \Om$, $X$ is the vector field defined as 
	\begin{equation*}
	X= \sum_{j=1}^{j=N} \left(x_j\frac{\pa}{\pa x_j}+ y_j\frac{\pa}{\pa y_j} \right)+2t\frac{\pa}{\pa t} 
	\end{equation*}
	and $dH_{Q-2}$ is the $Q-2$ dimensional Hausdorff measure in $\mathbb{R}^{2N+1} $.
\end{Lemma}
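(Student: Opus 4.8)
The plan is to derive this Rellich--Pohozaev identity in the classical way: multiply by the infinitesimal generator $Xu$ of the dilation group $\de_\theta$ and integrate by parts, exploiting the sharp commutation of $X$ with the horizontal frame $\{X_j,Y_j\}$. First I would apply the Gauss--Green formula recalled above with $v=Xu$ (the outward normal $\nu$ there being the $N$ of the statement), obtaining
\[
2\int_\Om (Xu)\,\De u\,d\xi \;=\; 2\int_{\pa\Om}(A\nabla u\cdot N)\,(Xu)\,dH_{Q-2}\;-\;2\int_\Om \na u\cdot\na(Xu)\,d\xi .
\]
For $u\in\varGamma^2(\overline\Om)$ this step is legitimate up to one caveat: $X$ contains the transversal term $2t\,\pa_t$, which is not part of the $\varGamma^2$ data; I would handle that exactly as in \cite{garofalo,garofalo1}, by exhausting $\Om$ with smooth subdomains (or mollifying $u$) and passing to the limit, the $C^2$ regularity of $\pa\Om$ guaranteeing convergence of the boundary traces.

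Next comes the pointwise identity $\na u\cdot\na(Xu)=\tfrac12\,X(|\na u|^2)+|\na u|^2$. To prove it I would record the commutators $[X,X_j]=-X_j$ and $[X,Y_j]=-Y_j$, which follow from the explicit expressions for $X$, $X_j$, $Y_j$ together with $[X,\pa_{x_j}]=-\pa_{x_j}$, $[X,\pa_{y_j}]=-\pa_{y_j}$, $[X,\pa_t]=-2\pa_t$. Hence $X_j(Xu)=X(X_ju)+X_ju$ and $Y_j(Xu)=X(Y_ju)+Y_ju$; multiplying by $X_ju$, resp. $Y_ju$, summing over $j$, and using $\sum_j\big[(X X_ju)X_ju+(X Y_ju)Y_ju\big]=\tfrac12\,X(|\na u|^2)$ gives the claim.

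It then remains to integrate $X(|\na u|^2)$ over $\Om$. Regarded as a Euclidean vector field on $\R\times\R\times\mathbb{R}$, $X=\sum_j(x_j\pa_{x_j}+y_j\pa_{y_j})+2t\pa_t$ has $\mathrm{div}\,X=2N+2=Q$, so the Euclidean divergence theorem yields $\int_\Om X(f)\,d\xi=\int_{\pa\Om}f\,(X\cdot N)\,dH_{Q-2}-Q\int_\Om f\,d\xi$ with $f=|\na u|^2$ (here $dH_{Q-2}$ is just the surface measure on the $(Q-2)$-dimensional hypersurface $\pa\Om$). Substituting the last two formulas into the first display, the solid terms combine as $-2\int_\Om|\na u|^2+Q\int_\Om|\na u|^2=(Q-2)\int_\Om|\na u|^2$, and moving $2\int_\Om(Xu)\De u$ and $(2-Q)\int_\Om|\na u|^2$ to the right-hand side gives precisely the asserted identity. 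The only genuine difficulty is the rigor of the integrations by parts under the stated regularity — there are no new ideas beyond the classical Rellich scheme — which is why I would in the end route the argument through the general Rellich-type identity of Garofalo cited in the statement rather than redo all the boundary-trace estimates from scratch.
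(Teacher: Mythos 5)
Your proposal is correct, and it is essentially the argument the paper has in mind: the paper does not prove this lemma at all, but simply states it as ``a consequence of a general integral identity of Rellich type'' and refers to \cite{garofalo,garofalo1}, so your derivation supplies exactly the details that are being outsourced. I checked the computation: the commutators $[X,X_j]=-X_j$, $[X,Y_j]=-Y_j$ do follow from $[X,\pa_{x_j}]=-\pa_{x_j}$, $[X,\pa_{y_j}]=-\pa_{y_j}$, $[X,\pa_t]=-2\pa_t$ together with $X(y_j)=y_j$, $X(x_j)=x_j$; they give the pointwise identity $\na u\cdot\na(Xu)=\tfrac12 X(|\na u|^2)+|\na u|^2$; the Euclidean divergence of the generator is $2N+2=Q$; and substituting into the Gauss--Green formula with $v=Xu$ reproduces the stated identity with the correct signs (and $dH_{Q-2}$ is indeed the ordinary surface measure on the $2N$-dimensional hypersurface $\pa\Om$, consistent with the $d\sigma$ in the paper's Gauss--Green formula). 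The one genuine subtlety you rightly flag is that $Xu$ involves $\pa_t u$, which is not among the derivatives controlled by the definition of $\varGamma^2(\overline{\Om})$, so the integrations by parts need the approximation/limiting argument of Garofalo--Lanconelli; deferring that technical point to the cited references is exactly what the paper itself does, so nothing is missing relative to the paper's own treatment.
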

\begin{Lemma}\label{lemhg14}
	Let  $u \in  \varGamma^2(\overline{\Om}) $ be a solution of $(P_a)$ then the following holds
	\begin{equation}\label{hg31}
	\begin{aligned}
	\frac{1}{2} \int_{\pa \Om}|\na u|^2 X.N~dH_{Q-2} &+ \left(\frac{Q-2}{2}\right) \int_{\Om} |\na u |^2 ~d\xi\\
	& =  \frac{aQ}{2} \int_{\Om} |u|^2~ d \xi+ \frac{2Q-\la}{2Q^*_\la}  \int_{ \Om}\int_{\Om} \frac{|u(\xi)|^{Q^*_\la}|u(\eta)|^{Q^*_\la}}{|\eta^{-1}\xi|^{\la}}  d\eta d \xi
		\end{aligned}
	\end{equation}
\end{Lemma}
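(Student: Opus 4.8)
The plan is to derive the Pohozaev-type identity in Lemma~\ref{lemhg14} by combining the Rellich-type integral identity of Lemma~\ref{lemhg13} with the equation $(P_a)$ and a scaling (dilation) argument on the convolution term. First I would substitute $\De u = -au - \left(\int_\Om \frac{|u(\eta)|^{Q^*_\la}}{|\eta^{-1}\xi|^\la}d\eta\right)|u|^{Q^*_\la-2}u$ into the right-hand side of the identity in Lemma~\ref{lemhg13}, so that
\begin{align*}
2\int_\Om Xu\,\De u\,d\xi = -2a\int_\Om u\,Xu\,d\xi - 2\int_\Om \left(\int_\Om \frac{|u(\eta)|^{Q^*_\la}}{|\eta^{-1}\xi|^\la}d\eta\right)|u|^{Q^*_\la-2}u\,Xu\,d\xi.
\end{align*}
Since $u\in\varGamma^2(\overline\Om)$ vanishes on $\pa\Om$, the boundary term $\int_{\pa\Om}(A\nabla u\cdot N)Xu\,dH_{Q-2}$ simplifies: on $\pa\Om$ we have $\na u = (\na u\cdot N)N + (\text{tangential part})$, but $u=0$ on $\pa\Om$ forces the tangential gradient to vanish, so $Xu = (X\cdot N)(\na u\cdot N)$ and $|\na u|^2 = (\na u\cdot N)^2$ there; hence $2(A\nabla u\cdot N)Xu - |\na u|^2 X\cdot N$ collapses to a single boundary integrand proportional to $|\na u|^2 X\cdot N$. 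This is the standard reduction and yields the left-hand side of \eqref{hg31}.

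The main work is the two volume integrals on the right. For the linear term, an integration by parts using the divergence structure of $X$ (note $\operatorname{div}X = Q$ with respect to the Haar/Lebesgue measure on $\H$, since $X$ generates the dilations $\de_\theta$ whose Jacobian is $\theta^Q$) gives $\int_\Om u\,Xu\,d\xi = \tfrac12\int_\Om X(u^2)\,d\xi = -\tfrac{Q}{2}\int_\Om u^2\,d\xi$, the boundary term again vanishing because $u=0$ on $\pa\Om$. For the Choquard term I would use the homogeneity of $|\eta^{-1}\xi|^{-\la}$ under dilations together with the product rule for $X$ acting on the double integral. Concretely, set $D(u) = \int_\Om\int_\Om \frac{|u(\xi)|^{Q^*_\la}|u(\eta)|^{Q^*_\la}}{|\eta^{-1}\xi|^\la}\,d\eta\,d\xi$; the clean way is to test the weak formulation $\ld\J'(u),\varphi\rd=0$ with $\varphi = Xu$ (justified by the $\varGamma^2$ regularity and $u|_{\pa\Om}=0$), which directly produces $\int_\Om \left(\int_\Om \frac{|u(\eta)|^{Q^*_\la}}{|\eta^{-1}\xi|^\la}d\eta\right)|u|^{Q^*_\la-2}u\,Xu\,d\xi$ in terms of $D(u)$. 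Using that $u\mapsto u\circ\de_\theta$ scales $D$ by $\theta^{-(2Q-\la)}\theta^{Q}$-type factors and differentiating in $\theta$ at $\theta=1$, one finds $\int_\Om(\cdots)|u|^{Q^*_\la-2}u\,Xu\,d\xi = -\tfrac{2Q-\la}{2Q^*_\la}\,D(u)$ after accounting for the two copies of $|u|^{Q^*_\la}$ and the $d\eta\,d\xi$ measure (the exponent bookkeeping being exactly the one that makes $Q^*_\la$ the critical exponent). Assembling these three computations into the identity of Lemma~\ref{lemhg13} and rearranging gives \eqref{hg31}.

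The step I expect to be the genuine obstacle is making the dilation/integration-by-parts argument for the nonlocal term rigorous: one must carefully track how $X$ (which is not the Euclidean radial field but the anisotropic generator of $\de_\theta$) interacts with both the kernel $|\eta^{-1}\xi|^{-\la}$ and the double domain of integration, and verify there is no boundary contribution from $\pa\Om$ in the nonlocal term — this is where strict star-shapedness will eventually matter for Theorem~\ref{thmhg2}, but for the identity itself one needs $\Om$ merely smooth and $u\in\varGamma^2(\overline\Om)$ with $u=0$ on $\pa\Om$ so that the relevant $X(u^{Q^*_\la})$ manipulations are licit. A secondary technical point is justifying that $Xu$ is an admissible test function, which follows from $u\in\varGamma^2(\overline\Om)$ together with $u|_{\pa\Om}=0$ (so $Xu\in\mathring{S}_1^2(\Om)$ after the tangential-gradient argument), and that all integrals converge, which is immediate from Proposition~\ref{prophg1} and the regularity already established in Theorem~\ref{thmhg1}(ii).
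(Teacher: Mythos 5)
Your proposal is correct in outline and lands on the right identity with the right constants; the skeleton (substitute the equation into the Rellich identity of Lemma \ref{lemhg13}, reduce the boundary term using the vanishing of the tangential gradient, and use $\operatorname{div}X=Q$ to get $\int_\Om u\,Xu = -\tfrac{Q}{2}\int_\Om|u|^2$) coincides with the paper's. The one place you genuinely diverge is the nonlocal term. The paper computes $\int_\Om Xu\,\big(|\cdot|^{-\la}*|u|^{Q^*_\la}\big)|u|^{Q^*_\la-2}u\,d\xi$ by writing $Xu\,|u|^{Q^*_\la-2}u=\tfrac{1}{Q^*_\la}X(|u|^{Q^*_\la})$, integrating by parts on the fixed domain $\Om$ (the boundary term dies since $u=0$ on $\pa\Om$), explicitly evaluating $X$ applied to the kernel $|\eta^{-1}\xi|^{-\la}$ (the terms $K_1$), repeating with the analogous field $Y$ in the $\eta$ variable (the terms $K_2$), and adding the two so that $K_1+K_2$ reassembles $-\la$ times the kernel by the joint homogeneity $|(\de_\theta\eta)^{-1}\de_\theta\xi|=\theta|\eta^{-1}\xi|$; this yields \eqref{hg29}. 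Your dilation-derivative argument is exactly the integrated (Euler-identity) form of that computation, and your constant $-\tfrac{2Q-\la}{2Q^*_\la}D(u)$ agrees with \eqref{hg29}. What your version buys is brevity and conceptual clarity (no explicit $K_1$, $K_2$ bookkeeping); what it costs is precisely the obstacle you flag yourself: $D(u\circ\de_\theta)$ lives on the moving domain $\de_{\theta^{-1}}\Om$, so differentiating at $\theta=1$ requires either extending $|u|^{Q^*_\la}$ by zero and checking the boundary contribution vanishes (it does, since $u\in\varGamma^2(\overline\Om)$ vanishes on $\pa\Om$ and $Q^*_\la>1$), or — which is the rigorous unpacking — performing the two-variable integration by parts on the fixed domain as the paper does, where the only boundary term is $\tfrac{1}{Q^*_\la}\int_{\pa\Om}\big(\int_\Om|u(\eta)|^{Q^*_\la}|\eta^{-1}\xi|^{-\la}d\eta\big)|u|^{Q^*_\la}\,X\cdot N\,dH_{Q-2}=0$. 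So your route is sound, but to complete it you would essentially reproduce the paper's symmetrized computation; no star-shapedness is needed for the identity itself, as you correctly note.
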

\begin{proof}
Let $\xi= (x,y,t)$ and $\eta = (x^\prime,y^\prime,t^\prime)$. Multiplying  $(P_a)$ with $Xu$ and then integrating over $\Om$ we deduce that 
\begin{align}\label{hg27}
- \int_{\Om} Xu \De u~ d\xi = a\int_{ \Om} u\; Xu ~ d \xi  + \int_{ \Om} Xu \left(	\int_{\Om} \frac{|u|^{Q^*_\la}}{|\eta^{-1}\xi|^\la} ~d\eta  \right) |u|^{Q^*_\la-2}u~ d \xi 
\end{align}
Using the fact that $\nabla u = -N |\nabla u|$ on  $\pa \Om$, it follows that 
\begin{align*}
\int_{\pa \Om} (A\nabla u\cdot N)Xu~dH_{Q-2} = \int_{\pa \Om}|\na u|^2 X.N~ dH_{Q-2}
\end{align*}
Hence employing Lemma \ref{lemhg13}, we have 
\begin{align}\label{hg28}
- \int_{\Om} Xu \De u~ d\xi = -\frac{1}{2} \int_{\pa \Om}|\na u|^2 X.N~dH_{Q-2}- \left(\frac{Q-2}{2}\right) \int_{\Om} |\na u |^2 ~d\xi
\end{align}
Taking into account  the fact that $u=0$ on $\pa \Om$, $\text{div}~X=Q$ and  divergence theorem, we obtain  
\begin{equation*}
\begin{aligned}
&\int_{ \Om} Xu(\xi)  \left(	\int_{\Om} \frac{|u(\eta)|^{Q^*_\la}}{|\eta^{-1}\xi|^\la} ~d\eta  \right) |u(\xi)|^{Q^*_\la-1}~ d \xi\\
&  = \frac{1}{Q^*_\la}\int_{ \Om} X (|u(\xi)|^{Q^*_\la}) \left(	\int_{\Om} \frac{|u(\eta)|^{Q^*_\la}}{|\eta^{-1}\xi|^\la} ~d\eta  \right)  ~ d \xi\\
&= \frac{1}{Q^*_\la}\int_{ \Om} X  \left(	\int_{\Om} \frac{|u(\eta)|^{Q^*_\la}}{|\eta^{-1}\xi|^\la} ~d\eta  |u(\xi)|^{Q^*_\la}\right)  ~ d \xi- \frac{1}{Q^*_\la}\int_{ \Om} |u(\xi)|^{Q^*_\la}  X  \left(	\int_{\Om} \frac{|u(\eta)|^{Q^*_\la}}{|\eta^{-1}\xi|^\la} ~d\eta  \right)  ~ d \xi \\
&= \frac{1}{Q^*_\la}\int_{ \pa \Om}  \left(	\int_{\Om} \frac{|u(\eta)|^{Q^*_\la}}{|\eta^{-1}\xi|^\la} ~d\eta  \right) |u(\xi)|^{Q^*_\la} X.N ~ dH_{Q-2}- \frac{1}{Q^*_\la}\int_{ \Om} \text{div}X  	\int_{\Om} \frac{|u(\eta)|^{Q^*_\la}|u(\xi)|^{Q^*_\la}}{|\eta^{-1}\xi|^\la} ~d\eta    ~ d \xi\\
& \qquad - \frac{1}{Q^*_\la}\int_{ \Om} |u(\xi)|^{Q^*_\la}  X  \left(	\int_{\Om} \frac{|u(\eta)|^{Q^*_\la}}{|\eta^{-1}\xi|^\la} ~d\eta  \right)  ~ d \xi\\
& = - \frac{Q}{Q^*_\la}\int_{ \Om}\int_{\Om} \frac{|u(\xi)|^{Q^*_\la}|u(\eta)|^{Q^*_\la}}{|\eta^{-1}\xi|^\la} ~d\eta  d \xi - \frac{1}{Q^*_\la}\int_{ \Om} |u(\xi)|^{Q^*_\la}  X  \left(	\int_{\Om} \frac{|u(\eta)|^{Q^*_\la}}{|\eta^{-1}\xi|^\la} ~d\eta  \right)  ~ d \xi\\
& = - \frac{Q}{Q^*_\la}\int_{ \Om}\int_{\Om} \frac{|u(\xi)|^{Q^*_\la}|u(\eta)|^{Q^*_\la}}{|\eta^{-1}\xi|^\la} ~d\eta  d \xi + \frac{\la}{Q^*_\la}  \int_{ \Om}\int_{\Om} \frac{|u(\xi)|^{Q^*_\la}|u(\eta)|^{Q^*_\la}}{|\eta^{-1}\xi|^{\la+4}} K_1  d\eta d \xi, 
\end{aligned}
\end{equation*}
where \begin{align*}
K_1 =&  [(x-x^\prime)^2+ (y-y^\prime)^2][\ld  x,x-x^\prime \rd + \ld y,y-y^\prime \rd] \\ &+ [(t-t^\prime)+ 2(\ld x,y^\prime\rd - \ld x^\prime, y \rd  ) ][ \ld x,y^\prime\rd - \ld x^\prime, y \rd+t ] .
\end{align*}
 Similarly for  $Y=  \sum\limits_{j=1}^{j=N} \left(x_j^\prime \frac{\pa}{\pa x_j^\prime}+ y_j^\prime\frac{\pa}{\pa y_j^\prime} \right)+2t^\prime\frac{\pa}{\pa t^\prime}$, we have 
\begin{equation*}
\begin{aligned}
\int_{ \Om} Yu(\eta) & \left(	\int_{\Om} \frac{|u(\xi)|^{Q^*_\la}}{|\eta^{-1}\xi|^\la} ~d\eta  \right) |u(\eta)|^{Q^*_\la-1}~ d \xi\\
&  = - \frac{Q}{Q^*_\la}\int_{ \Om}\int_{\Om} \frac{|u(\xi)|^{Q^*_\la}|u(\eta)|^{Q^*_\la}}{|\eta^{-1}\xi|^\la} ~d\eta  d \xi + \frac{\la}{Q^*_\la}  \int_{ \Om}\int_{\Om} \frac{|u(\xi)|^{Q^*_\la}|u(\eta)|^{Q^*_\la}}{|\eta^{-1}\xi|^{\la+4}} K_2  d\eta d \xi, 
\end{aligned}
\end{equation*}
where \begin{align*}
K_2 = & [(x-x^\prime)^2+ (y-y^\prime)^2][\ld  x^\prime,x^\prime-x \rd + \ld y^\prime,y^\prime-y \rd]\\&+ [(t-t^\prime)+ 2(\ld x,y^\prime\rd - \ld x^\prime, y \rd  ) ][ \ld x,y^\prime\rd - \ld x^\prime, y \rd-t^\prime ] 
\end{align*}
Consequently we have 
\begin{equation}\label{hg29}
\begin{aligned}
2\int_{ \Om} Xu(\xi) & \left(	\int_{\Om} \frac{|u(\eta)|^{Q^*_\la}}{|\eta^{-1}\xi|^\la} ~d\eta  \right) |u(\xi)|^{Q^*_\la-1}~ d \xi = 
 \frac{\la-2Q}{Q^*_\la}  \int_{ \Om}\int_{\Om} \frac{|u(\xi)|^{Q^*_\la}|u(\eta)|^{Q^*_\la}}{|\eta^{-1}\xi|^{\la}}  d\eta d \xi, 
\end{aligned}
\end{equation}
Moreover by divergence theorem, we get 
\begin{equation}\label{hg30}
\int_{ \Om} u\; Xu ~ d \xi= - \frac{Q}{2} \int_{\Om} |u|^2~ d \xi. 
\end{equation}
Combining \eqref{hg27} with \eqref{hg28},  \eqref{hg29} and \eqref{hg30} yields \eqref{hg31}.\QED
\end{proof}
\textbf{Proof of Theorem \ref{thmhg2}}
	Let  $u \in \varGamma^2(\overline{\Om})$ be a nontrivial solution of  $(P_a)$ then employing Lemma \ref{lemhg14}, we obtain
	\begin{equation*}
	 \int_{\pa \Om}|\na u|^2 X.N~dH_{Q-2} =  a \int_{\Om} |u|^2~ d \xi,
	\end{equation*}
	But it is given that  $a<0$ and   $\Om$ is a star shaped domain with respect ot origin, that is,  $X.N>0$, it follows that $u\equiv0$, which yields a contradiction. Hence proof follows. \QED

\end{document}